\theoremstyle{plain}
\newtheorem{theorem}{Theorem}[section]
\newtheorem{lemma}[theorem]{Lemma}
\theoremstyle{definition}
\newtheorem{definition}[theorem]{Definition}
\newtheorem{example}[theorem]{Example}
\theoremstyle{remark}
\newtheorem{remark}[theorem]{Remark}
\pgfplotsset{compat=1.18} 
\DeclareMathOperator{\ord}{ord}
\DeclareMathOperator{\EE}{\mathbb{E}}
\DeclareMathOperator{\TV}{TV}
\DeclareMathOperator{\diam}{diam}
\DeclareMathOperator{\cyc}{cyc}
\newcommand{\rot}[1]{\phi_{#1}} 
\newcommand{\Unif}{\mathrm{Unif}}
\newcommand{\calO}{\mathcal{O}}
\newcommand{\calI}{\mathcal{I}}
\newcommand{\ext}{\mathrm{ext}}
\newcommand{\new}{\mathrm{new}}
\newcommand{\cat}{{\mathrm{Cat}}}
\newcommand{\orbit}{\mathrm{orbit}} 
\newcommand{\ind}{\mathrm{index}}
\newcommand{\joint}{{\mathrm{joint}}}
\title{From Continuous to Discrete: \\ 
a No-U-Turn Sampler for Permutations}
\author{Nawaf Bou-Rabee\thanks{Department of Mathematical Sciences, Rutgers University, and Center for Computational Mathematics, Flatiron Institute \href{mailto:nawaf.bourabee@rutgers.edu}{\texttt{nawaf.bourabee@rutgers.edu}}}
\and
Zichu Wang\thanks{Courant Institute of Mathematical Sciences, New York University, \href{mailto:zw3409@nyu.edu}{\texttt{zw3409@nyu.edu}}}
}
\date{}
\begin{document}

\maketitle

\begin{abstract}
We introduce a discrete-space analogue of the No-U-Turn sampler on the symmetric group $S_n$, yielding a locally adaptive and reversible Markov chain Monte Carlo method for $\mathrm{Mallows}(d,\sigma_0)$.  Here $d:S_n\times S_n\to[0,\infty)$ is any fixed distance on $S_n$, $\sigma_0\in S_n$ is a fixed reference permutation, and the target distribution on $S_n$ has mass function $\pi(\sigma)\propto e^{-\beta d(\sigma,\sigma_0)}$ where $\beta>0$ is the inverse temperature.  The construction replaces Hamiltonian trajectories with measure-preserving group-orbit exploration. A
randomized dyadic expansion is used to explore a one-dimensional orbit until a
probabilistic \emph{no-underrun} criterion is met, after which the next state is
sampled from the explored orbit with probability proportional to the target
weights.  On the theory side, embedding this transition within the Gibbs self-tuning (GIST) framework provides a concise proof of reversibility. Moreover, we construct a \emph{shift coupling} for orbit segments and prove an explicit edge-wise contraction in the Cayley distance under a mild Lipschitz condition on the energy $E(\sigma)=d(\sigma,\sigma_0)$. A path-coupling argument then yields
an $O(n^2\log n)$ total-variation mixing-time bound.

\end{abstract}

\section{Introduction}

Locally adaptive Markov chain Monte Carlo (MCMC) methods such as the No-U-Turn Sampler (NUTS)~\cite{HoGe2014,betancourt2017conceptual,BouRabeeCarpenterMarsden2024} and its gradient-free analogue, the No-Underrun Sampler (NURS)~\cite{NURS1}, represent a major advance in MCMC.  Both
methods dynamically adapt the exploration length of proposed trajectories to
the local structure of the target distribution, enabling efficient mixing
without manual parameter tuning.  While NUTS and NURS have proved highly
effective for complex distributions in continuous spaces, no general framework
has yet extended these ideas to discrete settings such as permutation spaces,
where differentiable structure is absent and local scales vary combinatorially.

This paper introduces a discrete-space analogue of NUTS on the symmetric group $S_n$, yielding a locally adaptive and reversible sampler for the \emph{Mallows permutation model}. The Mallows model, denoted $\mathrm{Mallows}(d,\sigma_0)$, is a widely used distribution for ranking and preference data, as well as statistical inference on permutation spaces~\cite{vitelli2018mallows}. It forms a canonical exponential family on $S_n$ defined by \begin{equation}\label{eq:mallows} P_\beta(\sigma) =\frac{1}{Z_\beta}\exp[-\beta\, d(\sigma,\sigma_0)],\qquad \sigma\in S_n, \end{equation} where $d:S_n\times S_n\to\mathbb{R}_+$ is a permutation distance, $\sigma_0$ is a reference permutation, and $\beta>0$ controls the concentration of the distribution. The normalization constant (or partition function)
\[ Z_{\beta} = \sum_{\rho \in S_n} \exp\!\bigl(-\beta\, d(\rho, \sigma_0)\bigr) \;, \] ensures that $P_\beta$ defines a valid probability distribution. As
$\beta\to0$, the distribution converges to the uniform measure on $S_n$, while
as $\beta\to\infty$ it concentrates at $\sigma_0$.

Several standard choices of permutation distance arise in applications.  Let
$\sigma,\tau\in S_n$.  Common examples include:
\begin{itemize}
  \item \textbf{$L^1$ distance:}
  \(d_{L^1}(\sigma,\sigma_0) = \sum_{i=1}^n |\sigma(i) - \sigma_0(i)|\),
  measuring total absolute displacement.
  \item \textbf{$L^2$ distance:}
  \(d_{L^2}(\sigma,\sigma_0) = \sum_{i=1}^n (\sigma(i) - \sigma_0(i))^2\),
  penalizing large displacements quadratically.
  \item \textbf{Kendall distance:}
  \(d_\tau(\sigma,\sigma_0)\), the number of adjacent transpositions required to
  transform $\sigma^{-1}$ into $\sigma_0^{-1}$.
  \item \textbf{Cayley distance:}
  \(d_{\mathrm{Cay}}(\sigma,\sigma_0)\), the minimal number of transpositions
  needed to transform $\sigma$ into $\sigma_0$.
  \item \textbf{Hamming distance:}
  \(d_{\mathrm{Ham}}(\sigma,\sigma_0)
    = \#\{\, i : \sigma(i) \ne \sigma_0(i) \,\}\),
  counting the number of mismatched positions.
  \item \textbf{Ulam distance:}
  \(d_{\mathrm{Ulam}}(\sigma,\sigma_0)
    = n - \ell(\sigma_0\sigma^{-1})\),
  where $\ell(\sigma_0\sigma^{-1})$ denotes the length of the longest increasing
  subsequence of $\sigma_0\sigma^{-1}$.
\end{itemize}

Throughout the paper, we take the reference permutation to be
$\sigma_0=\mathrm{id}$ and analyze the resulting Markov kernel induced by the
discrete NURS transition from both algorithmic and probabilistic perspectives,
establishing reversibility and quantitative mixing-time bounds.

\subsection{Background and Motivation} 

We briefly review existing methods for sampling from the Mallows model and the
associated mixing-time analyses.  Classical Markov chain Monte Carlo (MCMC)
algorithms for sampling from~\eqref{eq:mallows} are based on local moves, such as
adjacent or random transpositions, combined with Metropolis--Hastings
accept--reject steps
\cite{DiaconisHanlon1992,DiaconisRam2000}.  In contrast, global-move algorithms
such as Hit-and-Run~\cite{Diaconis2007HitandRun} belong to a broader class of
auxiliary-variable MCMC methods that includes Gibbs sampling, data augmentation,
and slice sampling.  At each step, Hit-and-Run selects a random one-dimensional
direction (a ``line'') through the current state and samples the next point from
the conditional distribution along that line.

In discrete spaces such as $S_n$, the analogue of a line is a finite orbit under a
group action, and a Hit-and-Run update would require exact sampling from the
target distribution conditioned on such an orbit.  For a general permutation
distance $d$, this conditional distribution does not admit a tractable
representation: the order of a typical permutation grows superpolynomially in
$n$ (more precisely, $\log \operatorname{ord}(\pi) \sim \tfrac12(\log n)^2$ in
probability for $\pi$ uniform in $S_n$ by the Erd{\H{o}}s--Tur{\'a}n law
\cite{ErdosTuran1965,BarbourTavare1994}) and the energy values
$d(\sigma\circ\eta^k,\sigma_0)$ along the orbit do not in general exhibit exploitable
structure as a function of $k$.  As a consequence, exact conditional sampling
along orbits is computationally infeasible in general.  Global Hit-and-Run–type
updates are therefore available only in special cases where the energy admits a
separable form that enables efficient conditional sampling, as in the $L^1$ and
$L^2$ Mallows models studied by Zhong~\cite{zhong2021}.

\paragraph{Path coupling and mixing-time analyses.} Some quantitative convergence results are obtained via coupling arguments, and
in particular through the \emph{path coupling theorem}
\cite{BubleyDyer1997,LevinPeres2017}.  Path coupling reduces the task of
establishing contraction of the Markov kernel in expected distance over the
entire state space to verifying contraction for pairs of states at distance one
with respect to a chosen path metric.  For Markov chains on $S_n$, the state
space is commonly equipped with the path metric induced either by adjacent
transpositions (equivalently, Kendall’s $\tau$ distance) or by arbitrary
transpositions (equivalently, the Cayley distance).  One then constructs a
one-step coupling for pairs of permutations differing by a single transposition
and verifies that the expected distance after one step is strictly smaller than
the initial distance.

When such an edgewise contraction holds uniformly, the path coupling theorem
(see Theorem~\ref{thm:path-coupling}) implies exponential convergence to
stationarity in total variation.  The resulting mixing-time bound depends on the
contraction coefficient and on the diameter of the underlying metric space.
Alternative approaches to mixing-time analysis on $S_n$ include spectral
techniques, comparison arguments, and representation-theoretic methods, which
exploit the group structure of $S_n$ to analyze the spectrum of the transition
operator or to relate the chain to better-understood shuffling processes
\cite{DiaconisShahshahani1981,DiaconisHanlon1992,DiaconisSaloffCoste1993,DiaconisSaloffCoste1998,DiaconisRam2000}.

\paragraph{Card-Shuffling}
For $\beta=0$, the Mallows distribution reduces to the uniform measure on $S_n$,
independently of the choice of distance $d$ and reference permutation
$\sigma_0$.  In this case, there is no acceptance–rejection bias, and the
resulting Markov chain coincides with the underlying proposal chain.  Its
mixing behavior is therefore governed entirely by the proposal mechanism.

\begin{itemize}
  \item \textbf{Random transpositions.}
  In the random–transposition shuffle, at each step we choose
  $L,R \in \{1,\dots,n\}$ independently and uniformly and update
  $\sigma' = \sigma \circ (L\,R)$, with the convention that no move is made when
  $L=R$.  The resulting chain is irreducible and reversible with respect to
  $\Unif(S_n)$, and its total-variation mixing time is of order $O(n\log n)$; see
  \cite[Theorem~1]{DiaconisShahshahani1981}.

  \item \textbf{Adjacent transpositions.}
In the adjacent–transposition shuffle, at each discrete step we choose
$i \in \{1,\dots,n-1\}$ uniformly and update
$\sigma' = \sigma \circ (i\,i{+}1)$.  This chain is irreducible and reversible
with respect to $\Unif(S_n)$.  Its discrete-time total-variation mixing time is
$O(n^{3}\log n)$; this follows from the $O(n^{2}\log n)$ mixing time of the
continuous-time chain together with the fact that the continuous-time dynamics
performs $\Theta(n)$ adjacent transpositions per unit time, whereas the
discrete-time chain performs exactly one per step
\cite[Theorems~B.2 and~2.2]{Lacoin2016}.
\end{itemize}

\paragraph{Metropolis algorithm for the Cayley--Mallows model.}
Let $\theta = e^{-\beta}\in(0,1)$.  The Metropolis--Hastings algorithm on $S_n$
targeting the Cayley--Mallows distribution
$\mathrm{Mallows}(d_{\mathrm{Cay}},\sigma_0)$, as studied in
\cite{DiaconisSaloffCoste1998}, uses the random–transposition walk as its proposal
kernel.  From the current state $\sigma$, one chooses $i,j\in\{1,\dots,n\}$
uniformly at random and proposes $\sigma'=\sigma\circ(i\,j)$, with the proposal
accepted with probability
\[
\min\!\bigl\{1,\;\theta^{\,d_{\mathrm{Cay}}(\sigma',\sigma_0)
- d_{\mathrm{Cay}}(\sigma,\sigma_0)}\bigr\}.
\]
The resulting Markov chain is irreducible and reversible with respect to the
Cayley--Mallows distribution. Moreover, for fixed $\theta\in(0,1)$ and the chain
started at the identity, \cite[Theorem~2.1]{DiaconisSaloffCoste1998} yields a
total-variation mixing time of order $O(n\log n)$.

\paragraph{Systematic scan for the Kendall--Mallows model.}
Let $\theta=e^{-\beta}\in(0,1)$.  Consider the Metropolis algorithm on $S_n$
targeting the Kendall--Mallows distribution
$\mathrm{Mallows}(d_{\tau},\sigma_0)$, using a systematic scan of adjacent
transpositions as the proposal mechanism.  Writing $s_i=(i\,i{+}1)$, one scan
consists of sequentially applying the Metropolis kernels associated with the
adjacent transpositions in the fixed order
$s_1,\dots,s_{n-1},s_{n-1},\dots,s_1$.  The resulting Markov chain is irreducible
and reversible with respect to the Kendall--Mallows distribution.  Moreover,
\cite[Theorem~1.4]{DiaconisRam2000} shows that for each fixed
$\theta\in(0,1)$, the chain mixes in $O(n)$ scans, corresponding to a total
variation mixing time of order $O(n^2)$ in terms of individual Metropolis
updates.

\paragraph{Hit-and-Run for Separable Distances.}
For Mallows models on $S_n$ based on the $L^1$ and $L^2$ distances,
Zhong~\cite{zhong2021} constructs global ``hit-and-run'' Markov chains by introducing auxiliary variables and then
sampling \emph{exactly} from the conditional distribution on a tractable combinatorial
constraint set.   Each update introduces auxiliary variables that induce
coordinatewise, one-sided constraints on the next permutation, and then samples
the next state uniformly from the set of permutations satisfying those
constraints.  The resulting Markov chains are reversible and differ
substantially from the local Metropolis chains traditionally used for sampling
from Mallows models.

\smallskip
\noindent\emph{The $L^2$ model.}
When $\sigma_0=\mathrm{id}$, the squared $L^2$ energy satisfies the identity
\[
\sum_{i=1}^n (\sigma(i)-i)^2
\;=\;
2\sum_{i=1}^n i^2 - 2\sum_{i=1}^n i\,\sigma(i),
\]
so, up to an additive constant, the target distribution can be written as
\[
\widetilde P_\beta(\sigma)
\;\propto\;
\exp\!\Bigl(2\beta\sum_{i=1}^n i\,\sigma(i)\Bigr),
\qquad \sigma\in S_n.
\]
This representation linearizes the dependence of the energy on each coordinate
when viewed conditionally.  One step of Zhong’s $L^2$ hit-and-run chain from
$\sigma$ proceeds as follows \cite[Section~2.1]{zhong2021}:
\begin{itemize}
\item[(i)] independently draw
$u_i\sim\Unif\!\bigl([0,e^{2\beta i\sigma(i)}]\bigr)$ for $i=1,\dots,n$ and set
$b_i=(\log u_i)/(2\beta i)$;
\item[(ii)] sample $\sigma'$ uniformly from the constrained set
$\{\tau\in S_n:\ \tau(i)\ge b_i\ \text{for all }i\}$.
\end{itemize}
The restricted-uniform draw in step~(ii) can be implemented efficiently by a
sequential placement procedure \cite[Section~2.1]{zhong2021}.

\smallskip
\noindent\emph{The $L^1$ model.}
An analogous construction holds for the $L^1$ distance.  In this case, the
absolute-deviation energy admits a decomposition into one-sided contributions,
leading to upper-bound constraints of the form $\tau(i)\le b_i$ and a
corresponding restricted-uniform update
\cite[Section~2.2]{zhong2021}.

\smallskip
\noindent\emph{Scope and limitations.}
These hit-and-run samplers rely critically on special algebraic identities that
render the $L^1$ and $L^2$ energies separable and permit efficient sampling from
the resulting constrained permutation sets.  For Mallows models based on more
general permutation distances or other $L^p$ costs with $p\neq1,2$, no
comparable separable representation is known in general, and the corresponding
conditional updates appear intractable.

\smallskip
\noindent\emph{Mixing times.}
For the $L^2$ hit-and-run chain, Zhong proves an $O(\log n)$ total-variation
mixing-time upper bound in the high-temperature regime $\beta=O(n^{-2})$ using a
path-coupling argument \cite[Theorem~3.2]{zhong2021} (more precisely, under the
condition $16\beta n^2<1$).  In the same scaling, the paper establishes an
$\Omega(n\log n)$ lower bound for a local Metropolis chain based on random
transpositions targeting the $L^2$ Mallows model
\cite[Theorem~3.6]{zhong2021}.

For the $L^1$ hit-and-run chain, an analogous $O(\log n)$ mixing-time upper bound
is proved under the weaker high-temperature condition $\beta=O(n^{-1})$
\cite[Theorem~3.4]{zhong2021}.  At the same scaling, the corresponding local
Metropolis chain is shown to have $\Omega(n\log n)$ mixing time
\cite[Theorem~3.7]{zhong2021}.  Together, these results highlight the substantial
efficiency gains achievable with global hit-and-run updates whenever such
constructions are available.

\paragraph{Local adaptivity in continuous spaces: a bridge to permutations.}
In continuous space settings, the \emph{No-U-Turn Sampler} (NUTS), a widely used variant
of Hamiltonian Monte Carlo, adapts the integration time by progressively
expanding a Hamiltonian trajectory in randomly chosen forward and backward
directions until a stopping criterion indicates that further exploration is
unlikely to improve sampling efficiency.  The classical \emph{no-U-turn} rule
halts expansion when the simulated trajectory begins to reverse direction in
position space.  Owing to this automatic tuning of the trajectory length, NUTS
has become the default sampler in many probabilistic programming environments
\cite{carpenter2016stan,salvatier2016probabilistic,nimble-article:2017,ge2018t,phan2019composable}.

Its gradient-free analogue, the No-Underrun Sampler (NURS), replaces the geometric
no-U-turn criterion with a probabilistic \emph{no-underrun} rule that terminates
trajectory expansion once the additional probability mass contributed by
further extension is negligible \cite{NURS1}.  The resulting transition samples
from all points visited along the constructed trajectory with probabilities
proportional to the target density, yielding a rejection-free and reversible
Markov kernel.  The key insight underlying the present work is that this
mechanism extends naturally to discrete spaces once continuous volume
preservation along Hamiltonian flows is replaced by measure preservation under
appropriate group actions.

\subsection{Main Results} 

We introduce and analyze a discrete analogue of the No-Underrun Sampler (NURS) on
the symmetric group $S_n$.  Given a current state $\sigma\in S_n$, a
\emph{direction} $\rho\in S_n$ is drawn from a (possibly state-dependent)
direction law $q(\cdot\mid\sigma)$, and the algorithm considers the
measure-preserving maps
\[
\sigma \longmapsto \sigma\circ\rho^{k}, \qquad k\in\mathbb{Z}.
\]
These maps generate a one-dimensional group orbit
\[
\mathcal{O}(\sigma,\rho)
\;:=\;
\{\sigma\circ\rho^{k} : k\in\mathbb{Z}\},
\]
whose cardinality equals the order $\ord(\rho)$ of the permutation $\rho$.

Starting from $\sigma$, the algorithm explores $\mathcal{O}(\sigma,\rho)$ using a
randomized dyadic-doubling procedure that recursively enlarges a contiguous
segment of the orbit.  Exploration terminates when a probabilistic
\emph{no-underrun} stopping rule determines that the cumulative target mass of the
 explored segment dominates the contribution of its boundary points.
Conditioned on the realized orbit segment $\mathcal{O}$, the next state
$\sigma'$ is sampled from the categorical distribution
\[
\mathbb{P}(\sigma'=\tau \mid \mathcal{O})
\;=\;
\frac{w(\tau)}{\sum_{\tau'\in\mathcal{O}} w(\tau')},
\qquad
w(\tau)=\exp\!\bigl(-\beta\, d(\tau,\sigma_0)\bigr),
\]
where $\beta>0$ is an inverse-temperature parameter and
$d:S_n\times S_n\to[0,\infty)$ is a fixed permutation distance.

\medskip
The algorithm depends on a user-specified \emph{threshold parameter}
$\varepsilon>0$, which enters the no-underrun stopping rule and controls the
randomized truncation of orbit exploration.  Smaller values of $\varepsilon$
lead to longer explored segments and higher computational cost per iteration,
while larger values shorten exploration.  The invariant distribution of the
resulting Markov chain does not depend on $\varepsilon$.

\medskip
Our analysis relies on a structural symmetry assumption on the direction law.
Specifically, we assume that $q(\cdot\mid\sigma)$ is \emph{orbit-equivariant}, in
the sense that its conditional distribution is invariant along each orbit
generated by any direction in its support:
\[
q(\,\cdot \mid \sigma\,)
\;=\;
q(\,\cdot \mid \sigma \circ \eta^{\,i}\,)
\qquad
\text{for all }\sigma\in S_n,\ \eta\in\operatorname{supp}(q),\text{ and integers } i.
\]
Equivalently, $q$ assigns the same distribution of directions to all states lying
on a common orbit. This equivariance property plays a central role in both of our
main results.

\medskip
\noindent\textbf{Reversibility.}
Our first main result, Theorem~\ref{thm:main-reversibility}, establishes that the
discrete No-Underrun Sampler (NURS) is reversible with respect to the Mallows target
distribution. The proof proceeds by expressing NURS as a special case of the Gibbs
Self-Tuning (GIST) framework, in which reversibility is obtained by augmenting the
state space with auxiliary variables and applying a carefully chosen involutive
map. In the NURS setting, the augmented state includes the sampled direction, the
randomized dyadic orbit generated by the doubling scheme, and the index selected
along that orbit.

The key structural step is the identification of an involution on this augmented
space that re-centers the orbit at the selected index, effectively exchanging the
roles of the current state and the proposed state along the same orbit. Reversibility
then follows by verifying that the extended target distribution—combining the
Mallows law, the orbit-equivariant direction distribution, the orbit selection
kernel induced by randomized doubling, and the index selection kernel defined by
Mallows weights—is invariant under this involution. This invariance renders the
Metropolis–Hastings correction trivial, so that NURS is rejection-free while still
satisfying detailed balance. Theorem~\ref{thm:main-reversibility} thus provides a
rigorous correctness guarantee for the discrete NURS transition.

\medskip
\noindent\textbf{Mixing time for shiftable directions.}
Our second main result, Theorem~\ref{thm:main-mixing}, provides a quantitative
mixing-time bound for an idealized version of the discrete NURS transition under a
concrete, structurally constrained family of direction laws. We work on the
transposition Cayley graph of $S_n$ equipped with the Cayley distance
$d_{\mathrm{Cay}}$, so that adjacent states are pairs
$(\sigma,\sigma\circ\tau_{ij})$ differing by a single transposition. The proof is
based on path coupling: we construct, for each edge, a one-step coupling whose
expected Cayley distance contracts by a uniform factor, and then propagate this
local contraction to all of $S_n$ via the Bubley–Dyer path coupling theorem.

The direction law is supported on \emph{shiftable} permutations of the form
$\eta=\tau_{ij}h$, where $\tau_{ij}$ is a transposition and $h$ fixes $i$ and $j$
and consists only of odd-length cycles (Definition~\ref{def:Omegai}). This algebraic
structure implies $\eta^{m}=\tau_{ij}$ for $m=\ord(h)$ and hence forces a rigid
relationship between $\eta$-orbits based at neighboring states. In the mixing-time
analysis we consider an idealized, non-truncated kernel in which the orbit window
is the full $\eta$-orbit (length $2m$) and the stopping and sub-stopping rules are
suppressed; this idealized kernel coincides with NURS when the orbit window is taken
to be of full length.

Fix an edge $(\sigma,\sigma\circ\tau_{ij})$ and couple the two chains by sharing the
random choice of the transposition pair $(I,J)$ and the subsequent direction
$\eta\in\Omega_{IJ}$. This produces two regimes. In the \emph{aligned} case
$(I,J)=(i,j)$, the orbit identity $\eta^{m}=\tau_{ij}$ yields an exact index-shift
relation $Y_t=X_{t+m}$ between the two full orbits; consequently the induced
categorical laws on the orbit window coincide up to a deterministic shift, and one
obtains a perfect one-step coupling with $d_{\mathrm{Cay}}(U,V)=0$. In the
complementary \emph{mismatch} case $(I,J)\neq(i,j)$, the two orbit points remain at
unit Cayley distance when indexed in lockstep, but the orbit segments do not align.
Here the coupling reduces to controlling the total variation distance between the two
categorical distributions obtained by normalizing Mallows weights along paired orbit
segments. Under the Lipschitz regularity assumption
$|E(\pi)-E(\rho)|\le L_E,d_{\mathrm{Cay}}(\pi,\rho)$, this weight control yields the
bound $\TV(p,q)\le \tanh(\beta L_E)$, enabling a maximal coupling that mismatches the
selected indices with probability at most $\tanh(\beta L_E)$.

Combining the aligned and mismatch contributions gives an explicit edge-wise
contraction factor $\delta(\beta)$ for the expected Cayley distance, and hence (when
$\delta(\beta)<1$) geometric decay of total variation distance. In the resulting
high-temperature regime, this yields an $O\bigl(n^2\log n\bigr)$ upper bound on the
total-variation mixing time.

\subsection{Structure of the Paper} Section~\ref{sec:nurs-discrete} introduces the discrete NURS transition.
Section~\ref{sec:nurs-reversibility} establishes reversibility via a Gibbs
self--tuning (GIST) embedding. Section~\ref{sec:coupling} develops the shift
coupling and derives a contraction bound.

Section~\ref{sec:numerics} reports numerical experiments for Mallows models based
on the Kendall, \(L^1\), \(L^2\), Hamming, Cayley, and Ulam distances, across a
range of inverse--temperature regimes. The experiments examine orbit index
distributions, fixed--point statistics, and trace diagnostics, and compare NURS
with standard two--point Metropolis--type updates. The results are consistent with
the qualitative and scaling predictions of the theory.

In Section~\ref{sec:beyond}, we conclude with open problems and directions for extending orbit--based sampling
beyond the symmetric group \(S_n\).

\section{The Discrete No-Underrun Sampler}
\label{sec:nurs-discrete}
This section formalizes the discrete No-Underrun Sampler (NURS) on the symmetric group~$S_n$
for target distributions of the form $P_\beta(\sigma)\propto \exp[-\beta\, d(\sigma,\sigma_0)]$
corresponding to the $\mathrm{Mallows}(d,\sigma_0)$ model in~\eqref{eq:mallows} with $\sigma_0$ set to the identity permutation.
Throughout, $\beta>0$ denotes the inverse-temperature parameter and $d:S_n\times S_n\to\mathbb{R}_+$ a chosen permutation distance.
For convenience, write
\begin{equation} \label{eq:weights}
w(\sigma)=\exp[-\beta\, d(\sigma,\sigma_0)] \;.
\end{equation}

\paragraph{Direction distribution.}
At each transition, the discrete NURS algorithm samples a random \emph{direction} on the symmetric group $S_n$.  
Formally, this direction is drawn from a probability distribution $q(\cdot \mid \sigma)$ that may depend on the current state $\sigma \in S_n$ and determines the orbit along which the next state of the chain is selected.  
To ensure consistency of orbit construction, the distribution $q$ must be invariant along each orbit it generates.

\medskip

\begin{definition}[Direction law]\label{def:direction-law}
 A probability kernel $q$ on $S_n$ is called an \emph{orbit-equivariant direction law} if
\[
q(\,\cdot \mid \sigma\,) = q(\,\cdot \mid \sigma \circ \eta^{\,i}\,)
\quad
\text{for all } \sigma \in S_n,\ \eta \in \operatorname{supp}(q(\cdot\mid\sigma)),\text{ and integers } i.
\]
\end{definition}

The orbit-equivariance property ensures that the direction distribution is identical for all states lying on the same orbit generated by any $\eta \in \operatorname{supp}(q(\cdot\mid\sigma))$.  
In the special case where $q$ is independent of~$\sigma$, the condition is automatically satisfied; the uniform distribution on~$S_n$ is the canonical example.

\begin{algorithm}[tbhp]
\caption{Fisher--Yates Shuffle for Sampling from $q=\Unif(\boldsymbol{S_n})$}
\label{alg:fisher-yates}
\textbf{Input:} positive integer $n$

\textbf{Output:} uniformly random permutation $\pi \in S_n$

\hrule

\begin{algorithmic}[1]
  \State Initialize $\pi = (\pi(1), \pi(2), \dots, \pi(n))$ by setting $\pi(i) = i$ for $i = 1, 2, \dots, n$
  \For{$i = n$ down to $2$}
    \State Choose $j$ uniformly at random from $\{1, 2, \dots, i\}$
    \State Swap $\pi(i)$ and $\pi(j)$
  \EndFor
  \State \Return $\pi$
\end{algorithmic}
\end{algorithm}

\medskip

\paragraph{Orbit Elements.} 

Orbits play a central role as auxiliary variables in the NURS transition.   We formalize below the notion of an orbit generated from an initial state and introduce a concatenation operation that will be used in the recursive construction of orbits.

\medskip

\begin{definition}[Orbit] \label{defn:orbit}
Let $\rho \in S_n$ be a fixed direction and $\sigma \in S_n$ an initial state. An \emph{orbit} generated by $\rho$ from $\sigma$ over the index interval $[a{:}b] = \{a, a+1, \dots, b\} \subset \mathbb{Z}$, with $a \le b$, is the ordered collection
\[
\mathcal{O} = (\sigma_a, \sigma_{a+1}, \dots, \sigma_b),
\]
where each element is defined by
\[
\sigma_k = \sigma \circ \rho^k.
\]
Here, $\rho^k$ denotes the $k$-fold composition of $\rho$ with itself if $k \ge 0$, and the $|k|$-fold composition of $\rho^{-1}$ if $k < 0$. The \emph{length} of the orbit, denoted $|\mathcal{O}|$, is $b - a + 1$.
\end{definition}

\medskip

\begin{definition}[Orbit Concatenation] \label{defn:concatenation}
Let $a, b, c \in \mathbb{Z}$ with $a \le b < c$, and fix $\rho \in S_n$ and $\sigma \in S_n$.  
Define two consecutive orbits generated by $\rho$ from $\sigma$:
\[
\calO = (\sigma_a, \sigma_{a+1}, \dots, \sigma_b) \quad \text{and} \quad \widetilde{\calO} = (\sigma_{b+1}, \dots, \sigma_c),
\]
where $\sigma_k = \sigma \circ \rho^k$ for $k \in [a:c]$.   The \emph{concatenation} of these orbits, denoted $\calO \odot \widetilde{\calO}$, is the combined sequence
\[
\calO \odot \widetilde{\calO} = (\sigma_a, \sigma_{a+1}, \dots, \sigma_c),
\]
with length $|\calO \odot \widetilde{\calO}| = c - a + 1$.
\end{definition}

\medskip
\paragraph{Orbit Construction.} 
Given $\rho \sim q(\cdot \mid \sigma)$, we construct an orbit starting from an initial permutation $\sigma \in S_n$ using a stochastic doubling procedure with a corresponding termination rule.

At each doubling step, we randomly choose a direction, either forward or backward, with equal probability. Suppose the current orbit $\mathcal{O}$ consists of permutations indexed from $a$ to $b$, and let $\ell = 2^{j-1}$ denote the length of the candidate extension at the $j$-th doubling.

If the direction is forward, we append the extension
\[
\mathcal{O}^{\ext} = (\sigma \circ \rho^{b+1},\ \sigma \circ \rho^{b+2},\ \dots,\ \sigma \circ \rho^{b + \ell}),
\]
and define the proposed updated orbit as $\mathcal{O}^{\new} = \mathcal{O} \odot \mathcal{O}^{\ext}$. If the direction is backward, we prepend the extension
\[
\mathcal{O}^{\ext} = (\sigma \circ \rho^{a - \ell},\ \sigma \circ \rho^{a - \ell + 1},\ \dots,\ \sigma \circ \rho^{a - 1}),
\]
and define $\mathcal{O}^{\new} = \mathcal{O}^{\ext} \odot \mathcal{O}$.

This process continues until one of the following criteria is met:
\begin{itemize}
  \item \textbf{Sub-stopping:} If the extension $\mathcal{O}^{\ext}$ satisfies a sub-stopping condition, we discard it and terminate with the current orbit $\mathcal{O}$ as the final orbit.
  \item \textbf{Stopping:} If the full extended orbit $\mathcal{O}^{\new}$ satisfies the stopping condition, we accept it and terminate with $\mathcal{O}^{\new}$ as the final orbit.
  \item \textbf{Maximum length:} If the extended orbit $\mathcal{O}^{\new}$ reaches the maximum permitted length $2^M$, we terminate and accept $\mathcal{O}^{\new}$ as the final orbit.
\end{itemize}
These stopping rules are based on a density threshold parameter $\varepsilon$ and are described next, followed by a detailed account of the state selection mechanism.

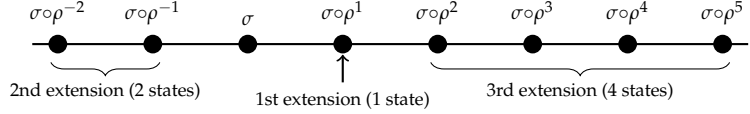
\begin{figure}[ht]
\centering
\begin{tikzpicture}[xscale=0.85, node distance=1cm]
  \tikzset{
    dot/.style={circle, fill=black, inner sep=2.5pt},
  }

  \node[dot,label=above:{$\scriptstyle\sigma \circ \rho^{-2}$}] (n-2) {};
  \node[dot,label=above:{$\scriptstyle\sigma \circ \rho^{-1}$}] (n-1) [right=of n-2] {};
  \node[dot,label=above:{$\scriptstyle\sigma$}] (n0) [right=of n-1] {};
  \node[dot,label=above:{$\scriptstyle\sigma \circ \rho^{1}$}] (n1) [right=of n0] {};
  \node[dot,label=above:{$\scriptstyle\sigma \circ \rho^{2}$}] (n2) [right=of n1] {};
  \node[dot,label=above:{$\scriptstyle\sigma \circ \rho^{3}$}] (n3) [right=of n2] {};
  \node[dot,label=above:{$\scriptstyle\sigma \circ \rho^{4}$}] (n4) [right=of n3] {};
  \node[dot,label=above:{$\scriptstyle\sigma \circ \rho^{5}$}] (n5) [right=of n4] {};

  \draw[thick] ([xshift=-4mm]n-2.center) -- ([xshift=4mm]n5.center);

  \node[text=black, font=\scriptsize] (firstLabel) at ($(n1)+(0,-0.75)$) {1st extension (1 state)};
  \draw[->, thick, black] (firstLabel.north) -- ($(n1)+(0,-4pt)$);


\draw [decorate,decoration={brace,mirror,amplitude=5pt}]
  ($(n-2.south west)+(0,-0.15)$) -- ($(n-1.south east)+(0,-0.15)$)
  node[midway, below=4pt] {\scriptsize 2nd extension (2 states)};

\draw [decorate,decoration={brace,mirror,amplitude=5pt}]
  ($(n2.south west)+(0,-0.15)$) -- ($(n5.south east)+(0,-0.15)$)
  node[midway, below=4pt] {\scriptsize 3rd extension (4 states)};

\end{tikzpicture}
\caption{\textbf{One realization of orbit construction in NURS by random doubling.} Starting from $\sigma$, the orbit expands forward, then backward, then forward again. 
The construction halts once either a \textsc{Stop} or \textsc{SubStop} condition is met, or when the maximum allowed trajectory length $2^M$ is reached, yielding the final orbit $\mathcal{O}$.}
\label{fig:orbit-example}
\end{figure}

\paragraph{Stopping and Sub-Stopping Conditions.}

Recall from~\eqref{eq:weights} that the unnormalized weights are given by 
$w(\sigma) = \exp[-\beta\, d(\sigma,\sigma_0)]$,
so that $P_\beta(\sigma) = w(\sigma) / \sum_{\rho \in S_n} w(\rho)$.
These weights underlie the following probabilistic stopping rules for orbit expansion.

\medskip

\begin{definition}[\textsc{Stop} (no--underrun condition)]
\label{defn:no-underrun}
Let $\mathcal{O} = (\sigma \circ \rho^k)_{k = a}^{b}$ be an orbit indexed by
consecutive integers from $a$ to $b$, and let $\varepsilon \in (0,1)$.
We say that $\mathcal{O}$ satisfies the \emph{no--underrun condition},
denoted $\textsc{Stop}(\mathcal{O},\varepsilon,w)$, if
\[
\max \!\left\{ w(\sigma \circ \rho^{a}),\, w(\sigma \circ \rho^{b}) \right\}
\;\le\;
\varepsilon \sum_{\tau \in \mathcal{O}} w(\tau).
\]
Equivalently, the combined weight at the two boundary points of the orbit
accounts for at most an $\varepsilon$ fraction of the total weight accumulated
along $\mathcal{O}$.
\end{definition}

\medskip

To ensure reversibility, we also introduce a recursive stopping rule that
prevents termination on shorter dyadic sub-orbits.

\medskip

\begin{definition}[Dyadic sub-orbits]
\label{def:dyadic-suborbit}
Let $\mathcal{O} = (\sigma \circ \rho^k)_{k=a}^{b}$ be an orbit of length
$\ell = 2^j$ for some $j \in \mathbb{N}$.
For each level $s \in \{0,\dots,j\}$, partition $\mathcal{O}$ into
$2^s$ consecutive sub-orbits of equal length $2^{j-s}$:
\[
\mathcal{O}
=
\mathcal{O}_{s,1} \odot \cdots \odot \mathcal{O}_{s,2^s},
\]
where $\mathcal{O}_{s,t}$ denotes the $t$-th sub-orbit at level $s$.  The sub-orbits $\{\mathcal{O}_{s,t}\}$ are called the \emph{dyadic sub-orbits}
of $\mathcal{O}$.  They correspond to the canonical dyadic (binary-tree)
decomposition induced by the doubling construction.
\end{definition}

\medskip

\begin{definition}[\textsc{SubStop} condition]
\label{defn:sub-stop}
Let $\mathcal{O}$ be an orbit of length $2^j$, and let
$\{\mathcal{O}_{s,t}\}$ denote its dyadic sub-orbits
(Definition~\ref{def:dyadic-suborbit}). We say that $\mathcal{O}$ satisfies the \emph{sub-stopping condition}
(\textsc{SubStop}) if at least one dyadic sub-orbit satisfies the
\textsc{Stop} condition:
\[
\textsc{SubStop}(\mathcal{O},\varepsilon,w)
\quad\Longleftrightarrow\quad
\exists\, s\in\{0,\dots,j\},\; t\in\{1,\dots,2^s\}
\text{ such that }
\textsc{Stop}\bigl(\mathcal{O}_{s,t},\varepsilon,w\bigr).
\]
\end{definition}

\begin{example}[Dyadic decomposition of an orbit]
Let $\mathcal{O} = (\sigma \circ \rho^k)_{k=-2}^{5}$ be an orbit of length $|\mathcal{O}| = 8 = 2^3$.  
At each level $s \in \{0,1,2,3\}$, the orbit is partitioned into $2^s$ consecutive sub-orbits 
$\mathcal{O}_{s,t}$ of equal length $2^{3-s}$, obtained by iteratively halving the parent orbit:
\begin{align*}
\mathcal{O}
&= \mathcal{O}_{0,1} = \mathcal{O}_{1,1} \odot \mathcal{O}_{1,2} = (\mathcal{O}_{2,1} \odot \mathcal{O}_{2,2}) \odot (\mathcal{O}_{2,3} \odot \mathcal{O}_{2,4}) \\
&= \bigl((\sigma\circ\rho^{-2} \odot \sigma\circ\rho^{-1}) \odot (\sigma \odot \sigma\circ\rho^{1})\bigr)
    \odot \bigl((\sigma\circ\rho^{2} \odot \sigma\circ\rho^{3}) \odot (\sigma\circ\rho^{4} \odot \sigma\circ\rho^{5})\bigr)\;.
\end{align*}
The tree below illustrates this dyadic structure.  
Each node $\mathcal{O}_{s,t}$ represents a consecutive sub-orbit of length $2^{3-s}$, 
and the leaves correspond to the individual orbit elements $\sigma \circ \rho^{-2}, \ldots, \sigma \circ \rho^{5}$.  
The \textsc{SubStop} condition holds as soon as any sub-orbit $\mathcal{O}_{s,t}$ satisfies the no-underrun rule.
\begin{center}
\begin{tikzpicture}[grow=down,level distance=1.2cm,
  every node/.style={font=\normalsize,align=center},
  edge from parent/.style={draw,-latex},
  level 1/.style={sibling distance=8cm},
  level 2/.style={sibling distance=4cm},
  level 3/.style={sibling distance=2cm}]
\node {$\mathcal{O}_{0,1}$}
  child {node {$\mathcal{O}_{1,1}$}
    child {node {$\mathcal{O}_{2,1}$}
      child {node {$\sigma \circ \rho^{-2}$}}
      child {node {$\sigma \circ \rho^{-1}$}}}
    child {node {$\mathcal{O}_{2,2}$}
      child {node {$\sigma$}}
      child {node {$\sigma \circ \rho^{1}$}}}}
  child {node {$\mathcal{O}_{1,2}$}
    child {node {$\mathcal{O}_{2,3}$}
      child {node {$\sigma \circ \rho^{2}$}}
      child {node {$\sigma \circ \rho^{3}$}}}
    child {node {$\mathcal{O}_{2,4}$}
      child {node {$\sigma \circ \rho^{4}$}}
      child {node {$\sigma \circ \rho^{5}$}}}};
\end{tikzpicture}
\end{center}
\end{example}

In short, the \textsc{SubStop} condition holds if the no–underrun condition holds for at least one dyadic sub-orbit obtained by iteratively halving the extension.

The \textsc{Stop} and \textsc{SubStop} rules together ensure that the final orbit $\calO$ is symmetric with respect to its initial point; in particular, the probability of reconstructing the same orbit from any other point in $\calO$ is identical. This symmetry is used later to establish reversibility.

\paragraph{State Selection.}

Once the final orbit $\mathcal{O}$ is constructed, the next state is selected by sampling from a categorical distribution over $\mathcal{O}$ with weights $\{ w(\tau) \}_{\tau \in \mathcal{O}}$.   More generally, given any countable set $S$ and a non-negative function $f: S \to \mathbb{R}_{\ge 0}$ satisfying $\sum_{x \in S} f(x) < \infty$, the \emph{categorical distribution} on $S$ weighted by $f$ is the probability distribution defined by:
\begin{equation}\label{eq:cat}
    \mathbb{P}(X = x) = \frac{f(x)}{\sum_{y \in S} f(y)} \quad \text{for all } x \in S.
\end{equation}
We write this as $ X \sim \cat(S, f) $.

\paragraph{Implementation.}
This part summarizes the full algorithmic procedure for NURS. 
The sampler constructs an orbit by performing a randomized sequence of forward and backward doublings along a randomly chosen direction, and then selects the next state through categorical resampling (Algorithm~\ref{alg:nurs-transition}). 
Orbit expansion is managed by the subroutine \textsc{ExtendOrbit} (Algorithm~\ref{alg:extend-orbit}), while termination is governed by two complementary stopping rules. 
The \textsc{Stop} rule (Algorithm~\ref{alg:stop}) evaluates whether the current orbit satisfies the no–underrun condition, indicating that further expansion would add negligible total weight. 
The \textsc{SubStop} rule (Algorithm~\ref{alg:sub-stop}) checks whether any dyadic sub-orbit of the proposed extension already meets the same condition; in that case, the extension is rejected.  
Together, these conditions ensure that every admissible orbit could have been generated from any of its constituent states with equal probability.

\begin{algorithm}[ht]
\caption{\textsc{NURS} Transition Kernel: $\textsc{NURS}\!\left(\sigma,\varepsilon,M,w,q\right)$}
\label{alg:nurs-transition}
\textbf{Inputs:} current state $\sigma \in S_n$; maximum doublings $M \in \mathbb{N}$; threshold $\varepsilon \in (0,1)$; unnormalized weight function $w: S_n \to \mathbb{R}_{\ge 0}$; direction law $q(\cdot \mid \sigma)$ on $S_n$.

\textbf{Output:} next state $\sigma' \in S_n$.

\hrule
\begin{algorithmic}[1]
\State sample $\rho \sim q(\cdot \mid \sigma)$
\State sample $B \sim \Unif(\{0,1\}^M)$
\State initialize $a \gets 0$, $b \gets 0$, and $\mathcal{O} \gets (\sigma)$
\For{$j=1$ to $M$}
  \State $n_j \gets 2^{j-1}$
  \If{$B_j = 1$} \Comment{forward doubling}
    \State $\mathcal{O}^{\mathrm{ext}} \gets \textsc{ExtendOrbit}(\sigma \circ \rho^{\,b}, \rho, \, n_j)$
  \Else \Comment{backward doubling}
    \State $\mathcal{O}^{\mathrm{ext}} \gets \textsc{ExtendOrbit}(\sigma \circ \rho^{\,a}, \rho, \,-n_j)$
  \EndIf
  \If{\textsc{SubStop}$(\mathcal{O}^{\mathrm{ext}},\varepsilon,w)$}
    \State \textbf{break} \Comment{reject extension; terminate with current $\mathcal{O}$}
  \EndIf
  \If{$B_j = 1$}
    \State $\mathcal{O} \gets \mathcal{O} \odot \mathcal{O}^{\mathrm{ext}}$, \quad $b \gets b + n_j$
  \Else
    \State $\mathcal{O} \gets \mathcal{O}^{\mathrm{ext}} \odot \mathcal{O}$, \quad $a \gets a - n_j$
  \EndIf
  \If{\textsc{Stop}$(\mathcal{O},\varepsilon,w)$}
    \State \textbf{break} \Comment{accept extended orbit; terminate}
  \EndIf
\EndFor
\State sample $\sigma' \sim \cat(\mathcal{O},w)$
\State \Return $\sigma'$
\end{algorithmic}
\end{algorithm}

\begin{algorithm}[ht]
\caption{\textsc{ExtendOrbit}$(\sigma,\rho,n)$}
\label{alg:extend-orbit}
\textbf{Inputs:} starting state $\sigma \in S_n$; direction $\rho \in S_n$; steps $n \in \mathbb{Z}$.

\textbf{Output:} ordered list of $|n|$ consecutive elements along the $\rho$-orbit from $\sigma$.

\hrule
\begin{algorithmic}[1]
\If{$n>0$} \State \Return $(\sigma\circ\rho^{\,1},\ \sigma\circ\rho^{\,2},\ \dots,\ \sigma\circ\rho^{\,n})$
\Else \State \Return $(\sigma\circ\rho^{\,n},\ \dots,\ \sigma\circ\rho^{-2},\ \sigma\circ\rho^{-1})$
\EndIf
\end{algorithmic}
\end{algorithm}

\begin{algorithm}[ht]
\caption{\textsc{Stop}$(\mathcal{O},\varepsilon,w)$}
\label{alg:stop}
\textbf{Inputs:} orbit $\mathcal{O}\subseteq S_n$; threshold $\varepsilon\in(0,1)$; weight function $w:S_n\to\mathbb{R}_{\ge0}$.

\textbf{Output:} \texttt{True} iff $\mathcal{O}$ satisfies the no–underrun condition.

\hrule
\begin{algorithmic}[1]
\State let $\sigma^{\mathrm{L}}$, $\sigma^{\mathrm{R}}$ be the leftmost and rightmost elements of $\mathcal{O}$
\State \Return $\max\!\bigl\{w(\sigma^{\mathrm{L}}),\,w(\sigma^{\mathrm{R}})\bigr\} \le \varepsilon \sum_{\breve{\sigma}\in\mathcal{O}} w(\breve{\sigma})$
\end{algorithmic}
\end{algorithm}

\begin{algorithm}[ht]
\caption{\textsc{SubStop}$(\mathcal{O},\varepsilon,w)$}
\label{alg:sub-stop}
\textbf{Inputs:} orbit $\mathcal{O}\subseteq S_n$ with $|\mathcal{O}|$ a power of two; threshold $\varepsilon\in(0,1)$; weight function $w:S_n\to\mathbb{R}_{\ge0}$.

\textbf{Output:} \texttt{True} iff $\mathcal{O}$ or one of its dyadic sub-orbits satisfies the no–underrun condition.

\hrule
\begin{algorithmic}[1]
\If{$|\mathcal{O}|<2$} \State \Return \texttt{False} \EndIf
\State write $\mathcal{O}=\mathcal{O}^{\mathrm{L}}\odot \mathcal{O}^{\mathrm{R}}$ with $|\mathcal{O}^{\mathrm{L}}|=|\mathcal{O}^{\mathrm{R}}|=|\mathcal{O}|/2$
\State \Return \textsc{Stop}$(\mathcal{O},\varepsilon,w)$ \textbf{ or } \textsc{SubStop}$(\mathcal{O}^{\mathrm{L}},\varepsilon,w)$ \textbf{ or } \textsc{SubStop}$(\mathcal{O}^{\mathrm{R}},\varepsilon,w)$
\end{algorithmic}
\end{algorithm}

\clearpage

\section{Reversibility of NURS}

\label{sec:nurs-reversibility}

Reversibility is a fundamental property of many MCMC methods, ensuring that the target distribution remains invariant under the transition kernel of the Markov chain. In this section, we review a general auxiliary-variable framework for constructing reversible Markov chains using measure-preserving involutions, known as Gibbs Self-Tuning (GIST) \cite{BouRabeeCarpenterMarsden2024}. We then show the NURS transition is a rejection-free instance of this framework.  As a consequence, NURS inherits detailed balance with respect to its target distribution without requiring an explicit Metropolis–Hastings correction.

The GIST framework builds on the classical auxiliary-variable formulation of Andersen and Diaconis \cite[Section 4.1]{Diaconis2007HitandRun}, where auxiliary variables are introduced, updated, and then discarded at the end of each transition step. By contrast, the second generalization in \cite[Section~4.2]{Diaconis2007HitandRun}, which we do not pursue here, covers a broader class of ``lifted'' samplers in which auxiliary variables are retained across transitions. Additionally, we do not treat data augmentation approaches for latent or missing variables and on mixture representations (e.g., \cite{AlbertChib1993,TannerWong1987,VanDykMeng2001,PolsonScottWindle2013}), and instead focus on the involution-based subclass most directly connected to NURS.

\subsection{Gibbs Self-Tuning on Discrete State Spaces}
\label{sec:gist-discrete}

Let $\mathbb{S}$ be a countable state space with target distribution $\mu$ and let $\mathbb{V}$ denote a (possibly multi-dimensional) auxiliary variable space.  
The idea of GIST is to construct a reversible Markov kernel on $\mathbb{S}$ by introducing an auxiliary variable $v\in\mathbb{V}$, resampling $v$ conditionally on the current state at each iteration, and applying an involution on the augmented space $\mathbb{A} = \mathbb{S} \times \mathbb{V}$. In the discrete setting, the reference measure on $\mathbb{A}$ is the product of counting measures on $\mathbb{S}$ and $\mathbb{V}$.  
Any bijection on $\mathbb{A}$ automatically preserves this measure, so an involution $\Psi:\mathbb{A}\to\mathbb{A}$ is measure-preserving by definition.

Formally, let $p_a(v \mid \theta)$ be a conditional probability mass function on~$\mathbb{V}$ given $\theta\in\mathbb{S}$.  
The extended target distribution on $\mathbb{A}$ is
\[
\hat{\mu}(\theta,v) = \mu(\theta)\, p_a(v \mid \theta),
\]
interpreted as a probability mass function on $\mathbb{A}$ with respect to counting measure.  
Because both spaces are discrete, all transitions are defined in terms of probability masses rather than densities.

Below, we index Markov chain iterations by $k\in\mathbb N$ (to avoid conflict with $n$ in $S_n$).  Given a current state $\theta_k\in\mathbb{S}$, the conditional law $p_a(\cdot\mid \theta_k)$, and a measurable involution $\Psi:\mathbb{A}\to\mathbb{A}$, one GIST transition is defined as follows:

\begin{enumerate}
\item[\textbf{(1)}] \textbf{Auxiliary-variable update.}  
Draw $v_k \sim p_a(\cdot\mid\theta_k)$.

\item[\textbf{(2)}] \textbf{Involution-based proposal.}  
Compute $(\tilde{\theta}_{k+1},\tilde{v}_{k+1}) = \Psi(\theta_k,v_k)$.

\item[\textbf{(3)}] \textbf{Metropolis correction.}  
Accept the proposal with probability
\[
\alpha(\theta_k,v_k)
= \min\!\left(1,\,
\frac{\hat{\mu}(\tilde{\theta}_{k+1},\tilde{v}_{k+1})}
{\hat{\mu}(\theta_k,v_k)} \right),
\]
and set
\[
\theta_{k+1} =
\begin{cases}
\tilde{\theta}_{k+1}, & \text{with probability } \alpha(\theta_k,v_k),\\[4pt]
\theta_k, & \text{otherwise.}
\end{cases}
\]
\end{enumerate}

Since the auxiliary variable is resampled and then discarded at each step, the resulting chain on~$\mathbb{S}$ is marginally Markovian.  
When $\Psi$ is an involution, the induced transition kernel satisfies detailed balance with respect to~$\mu$.
\medskip

\begin{theorem}[Reversibility of GIST transitions \cite{BouRabeeCarpenterMarsden2024}]
\label{thm:AVM_reversibility_discrete}
Let $\mathbb{A} = \mathbb{S} \times \mathbb{V}$ be a discrete augmented space, and let $\Psi:\mathbb{A}\to\mathbb{A}$ be an involution, i.e., $\Psi^2=\mathrm{id}$.  
Then the Markov chain on~$\mathbb{S}$ defined by the GIST transition above is reversible with respect to~$\mu$.
\end{theorem}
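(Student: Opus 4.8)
The plan is to verify detailed balance directly on the augmented space $\mathbb{A} = \mathbb{S}\times\mathbb{V}$ with respect to $\hat\mu$, and then project down to $\mathbb{S}$. First I would write the augmented transition kernel $\hat P$ explicitly: from $(\theta,v)$ the chain proposes $(\theta',v') = \Psi(\theta,v)$ deterministically, accepts with probability $\alpha(\theta,v) = \min(1, \hat\mu(\Psi(\theta,v))/\hat\mu(\theta,v))$, and otherwise stays put. The key algebraic fact to exploit is that $\Psi$ is a bijection preserving counting measure on $\mathbb{A}$ (automatic in the discrete setting, as noted), so the standard Metropolis--Hastings balance identity applies with trivial proposal ratio: for $(\theta,v)$ and its image $(\theta',v')=\Psi(\theta,v)$ with $(\theta',v')\neq(\theta,v)$, one has
\[
\hat\mu(\theta,v)\,\alpha(\theta,v)
= \min\!\bigl(\hat\mu(\theta,v),\,\hat\mu(\theta',v')\bigr)
= \hat\mu(\theta',v')\,\alpha(\theta',v'),
\]
where the last equality uses $\Psi^2=\mathrm{id}$, so that $\Psi(\theta',v') = (\theta,v)$ and hence $\alpha(\theta',v') = \min(1,\hat\mu(\theta,v)/\hat\mu(\theta',v'))$. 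This is exactly detailed balance for $\hat P$ on the off-diagonal; the diagonal (rejection) part is symmetric trivially. Care must be taken with the involution's fixed points and with the case $(\theta',v')=(\theta,v)$, but there $\alpha=1$ and detailed balance is immediate.

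Next I would pass from reversibility of $\hat P$ on $\mathbb{A}$ to reversibility of the marginal chain $P$ on $\mathbb{S}$. The marginal kernel is
\[
P(\theta,\theta') = \sum_{v\in\mathbb{V}} p_a(v\mid\theta)\,\hat P\bigl((\theta,v),(\theta',\cdot)\bigr),
\]
i.e.\ one sums out the freshly drawn $v$ and the discarded $\tilde v$. The claim $\mu(\theta)P(\theta,\theta') = \mu(\theta')P(\theta',\theta)$ then follows by summing the augmented detailed-balance identity over the auxiliary coordinates: $\mu(\theta)p_a(v\mid\theta) = \hat\mu(\theta,v)$ by definition, so summing $\hat\mu(\theta,v)\hat P((\theta,v),(\theta',v'))$ over all $v,v'$ with the projection constraint and using augmented reversibility yields the marginal identity. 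I would also remark that the marginal process is genuinely Markov because $v_k$ is drawn afresh from $p_a(\cdot\mid\theta_k)$ at each step and never carried over, so the update depends only on $\theta_k$.

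The main obstacle is bookkeeping rather than conceptual: one must be careful that the "proposal" in the GIST step is deterministic given $v$, so the proposal kernel on $\mathbb{A}$ is $q((\theta,v),(\theta',v')) = \mathbf{1}[(\theta',v')=\Psi(\theta,v)]$, and the Metropolis--Hastings acceptance ratio $\frac{\hat\mu(\theta',v')q((\theta',v'),(\theta,v))}{\hat\mu(\theta,v)q((\theta,v),(\theta',v'))}$ collapses to $\hat\mu(\theta',v')/\hat\mu(\theta,v)$ \emph{precisely because} $\Psi$ is an involution, which makes both indicator factors equal to $1$ simultaneously. If $\Psi$ were merely a bijection and not an involution this cancellation would fail, so this is where the hypothesis $\Psi^2=\mathrm{id}$ is essential. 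I would close by noting that no measure-theoretic subtleties arise since $\mathbb{A}$ is countable and everything is with respect to counting measure, so "measurable involution" simply means "involution" and measure preservation is automatic.
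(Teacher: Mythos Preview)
Your proposal is correct and follows essentially the same approach as the paper: establish detailed balance for the augmented kernel on $\mathbb{A}$ by observing that the deterministic proposal $q\bigl((\theta,v),(\theta',v')\bigr)=\mathbf{1}\{(\theta',v')=\Psi(\theta,v)\}$ is symmetric precisely because $\Psi^2=\mathrm{id}$, then marginalize over the auxiliary variable to obtain reversibility on $\mathbb{S}$. Your treatment is in fact slightly more explicit than the paper's sketch, carefully handling the diagonal case and spelling out the marginalization step.
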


\begin{proof}[Sketch of proof]
Let $\mathbb{A} = \mathbb{S} \times \mathbb{V}$ with counting measure as reference.  
The proposal on $\mathbb{A}$ is the deterministic pushforward
\[
r\big((\theta,v),\cdot\big) = \delta_{\Psi(\theta,v)}(\cdot)
\]
after refreshing $v \sim p_a(\cdot \mid \theta)$.  
Since $\Psi$ is an involution, it is a bijection that preserves the reference measure on $\mathbb{A}$ (Jacobian equal to~1 in the discrete sense).  
In particular,
\[
r\big((\theta,v),(\theta',v')\big)
= \mathbf{1}\!\left\{(\theta',v')=\Psi(\theta,v)\right\}
= \mathbf{1}\!\left\{(\theta,v)=\Psi(\theta',v')\right\}
= r\big((\theta',v'),(\theta,v)\big),
\]
so the proposal is \emph{symmetric} on $\mathbb{A}$.  
Therefore, the standard Metropolis--Hastings acceptance rule on $\mathbb{A}$ with target 
$\hat\mu(\theta,v) = \mu(\theta)\,p_a(v\mid\theta)$ satisfies detailed balance:
\[
\hat\mu(\theta,v)\, r\big((\theta,v),(\theta',v')\big)\, 
\alpha\big((\theta,v),(\theta',v')\big)
=
\hat\mu(\theta',v')\, r\big((\theta',v'),(\theta,v)\big)\,
\alpha\big((\theta',v'),(\theta,v)\big).
\]
Hence the Markov kernel on $\mathbb{A}$ is reversible with respect to~$\hat\mu$.  
Marginalizing over $v$ yields reversibility of the induced kernel on $\mathbb{S}$ with respect to~$\mu$. A full proof appears in~\cite{BouRabeeCarpenterMarsden2024}.
\end{proof}

The GIST framework offers a systematic way to construct reversible Markov kernels using measure-preserving involutions on augmented state spaces.   In what follows, we show that the discrete No-Underrun Sampler (NURS) fits exactly within this framework: it admits a natural involutive map~$\Psi$ on the extended space that preserves the joint law.  
Consequently, the Metropolis correction is trivial, and the resulting transition kernel is both rejection-free and reversible with respect to the Mallows distribution.

\subsection{NURS as a Rejection-Free GIST Sampler}

With Theorem~\ref{thm:AVM_reversibility_discrete} in hand, we now verify that the NURS transition kernel is reversible by expressing it as a special case of the GIST framework.

\medskip

\begin{theorem}
\label{thm:main-reversibility}
The transition kernel of  NURS  is reversible with respect to the Mallows distribution \eqref{eq:mallows}.
\end{theorem}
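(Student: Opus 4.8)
The plan is to instantiate the GIST template of Theorem~\ref{thm:AVM_reversibility_discrete} with an explicit augmented space and involution tailored to NURS, and then check that the extended target is invariant under that involution so that the Metropolis ratio is identically $1$. First I would fix the auxiliary space. Given a current state $\sigma\in S_n$, the NURS transition generates: a direction $\rho\sim q(\cdot\mid\sigma)$; a sequence of doubling bits $B\in\{0,1\}^M$ together with the sub-stop/stop outcomes, which deterministically determine the final orbit index window $[a{:}b]\ni 0$; and an index $k\in[a{:}b]$ selected according to the categorical law with weights $w(\sigma\circ\rho^{j})$. So I would set $\mathbb{V}$ to consist of tuples $v=(\rho,\calO,k)$, where $\calO=(\sigma\circ\rho^{j})_{j=a}^{b}$ is the realized orbit (equivalently, the pair $(a,b)$ together with $\rho$), and $k\in[a{:}b]$. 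The conditional law $p_a(v\mid\sigma)$ factors as
\[
p_a(\rho,\calO,k\mid\sigma)=q(\rho\mid\sigma)\;\Pr[\text{doubling produces window }[a{:}b]\text{ from base }0\mid\sigma,\rho]\;\frac{w(\sigma\circ\rho^{k})}{\sum_{j=a}^{b}w(\sigma\circ\rho^{j})}.
\]
I then need a clean formula for the middle factor; I would express it as a sum over admissible bit strings consistent with the window and the stop/sub-stop events, letting $g(\calO,\rho,c)$ denote the probability that the doubling construction, when based at orbit-index $c$, terminates with the index window of $\calO$.

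Next I would define the involution $\Psi$ on $\mathbb{A}=S_n\times\mathbb{V}$. Given $(\sigma,(\rho,\calO,k))$ with $\calO$ spanning indices $[a{:}b]$ and $0$ the base index of $\sigma$, set
\[
\Psi\bigl(\sigma,(\rho,\calO,k)\bigr)=\bigl(\sigma\circ\rho^{k},\,(\rho,\calO,-k)\bigr),
\]
that is: move to the selected state $\sigma'=\sigma\circ\rho^{k}$, keep the same direction and the same ordered orbit $\calO$ (now re-indexed so that $\sigma'$ sits at relative index $0$, i.e. the window becomes $[a-k{:}b-k]$), and select the relative index $-k$, which points back at the original $\sigma$. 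Applying $\Psi$ again returns $(\sigma,(\rho,\calO,k))$, so $\Psi^2=\mathrm{id}$; and since $\mathbb{A}$ is discrete, $\Psi$ is automatically measure-preserving. The three checks that $\Psi$ is well-defined are: (i) $\rho\in\operatorname{supp}(q(\cdot\mid\sigma'))$, which follows from orbit-equivariance of $q$ since $\sigma'$ lies on the $\rho$-orbit of $\sigma$; (ii) $-k$ lies in the re-centered window $[a-k{:}b-k]$, which holds iff $k\in[a{:}b]$; (iii) $\calO$ is a legitimate realized orbit based at $\sigma'$, which is immediate because $\calO$ is the same ordered tuple of permutations, only its base index label changed.

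The heart of the proof is verifying $\hat\mu\circ\Psi=\hat\mu$, i.e.
\[
\mu(\sigma)\,q(\rho\mid\sigma)\,g(\calO,\rho,0)\,\frac{w(\sigma\circ\rho^{k})}{W(\calO)}
=\mu(\sigma')\,q(\rho\mid\sigma')\,g(\calO,\rho,k)\,\frac{w(\sigma'\circ\rho^{-k})}{W(\calO)},
\]
where $W(\calO)=\sum_{\tau\in\calO}w(\tau)$ is manifestly $\Psi$-invariant, $\sigma'=\sigma\circ\rho^{k}$, and $\sigma'\circ\rho^{-k}=\sigma$. Cancelling $W(\calO)$, the displacement term on the left is $w(\sigma')$ and on the right is $w(\sigma)$; since $\mu(\sigma)\propto w(\sigma)$ these combine so that the required identity reduces to
\[
w(\sigma)\,w(\sigma')\,q(\rho\mid\sigma)\,g(\calO,\rho,0)=w(\sigma')\,w(\sigma)\,q(\rho\mid\sigma')\,g(\calO,\rho,k).
\]
Thus it suffices to prove the two symmetries $q(\rho\mid\sigma)=q(\rho\mid\sigma')$ and $g(\calO,\rho,0)=g(\calO,\rho,k)$. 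The first is exactly orbit-equivariance. The second — the statement that the randomized doubling produces the \emph{same} realized orbit segment with equal probability regardless of which of its interior points one starts from — is the main obstacle, and is precisely why the \textsc{SubStop} rule is in the algorithm: I would prove it by a combinatorial argument on the dyadic-tree decomposition. Concretely, one shows that for a window $[a{:}b]$ of length $2^{j}$ (or a terminal-by-sub-stop window), the set of $(B,\text{stop decisions})$ histories that realize it, and their probabilities, depend only on the multiset of dyadic sub-orbit weights of $\calO$ and on the \textsc{Stop}/\textsc{SubStop} status of each dyadic block — all of which are intrinsic to the ordered tuple $\calO$ and independent of the base point. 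The key lemma is that \textsc{Stop}$(\calO_{s,t})$ and \textsc{SubStop}$(\calO^{\mathrm{ext}})$ are functions of $\calO$ alone (clear from Definitions~\ref{defn:no-underrun}–\ref{defn:sub-stop}), and that the doubling recursion, viewed as building a balanced binary tree over $\calO$, assigns to each admissible terminal window a probability that is a product over $2^{-1}$ direction choices times indicator factors for the stop events — a quantity invariant under relabeling the base. Summing over histories then gives $g(\calO,\rho,0)=g(\calO,\rho,k)$ for every interior $k$, completing the verification of $\hat\mu\circ\Psi=\hat\mu$. With this in hand, Theorem~\ref{thm:AVM_reversibility_discrete} applies verbatim: the Metropolis acceptance ratio is $\hat\mu(\Psi(\sigma,v))/\hat\mu(\sigma,v)=1$, NURS is rejection-free, and marginalizing out $v=(\rho,\calO,k)$ yields reversibility on $S_n$ with respect to the Mallows distribution.
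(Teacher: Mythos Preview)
Your proposal is correct and follows essentially the same route as the paper: embed NURS in the GIST framework with auxiliary variables $(\rho,\text{orbit window},k)$, use the re-centering involution $(\sigma,\rho,\mathcal{O},k)\mapsto(\sigma\circ\rho^{k},\rho,\mathcal{O},-k)$, and reduce $\hat\mu\circ\Psi=\hat\mu$ to orbit-equivariance of $q$ plus base-point invariance of the orbit-construction probability, the latter holding because \textsc{Stop}/\textsc{SubStop} depend only on the ordered orbit. The only notable difference is that the paper makes your $g(\mathcal{O},\rho,c)$ fully explicit: it shows the rightmost index $b$ is in bijection with the bit string via $b=\sum_j B_j 2^{j-1}$, so there is exactly one history (not a sum) and $g(\mathcal{O},\rho,c)=2^{-m}\cdot\mathbf{1}(\text{admissibility of }\mathcal{O})$; this also immediately handles your point~(iii), which as stated is slightly glib since ``$\mathcal{O}$ is a legitimate realized orbit based at $\sigma'$'' is not quite immediate but rather a consequence of the admissibility indicator being intrinsic to $\mathcal{O}$.
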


\begin{proof}
We apply Theorem~\ref{thm:AVM_reversibility_discrete}, which guarantees reversibility for any GIST sampler.
To do so, we embed the NURS transition within the GIST formalism.

\smallskip
\textbf{GIST embedding.}
The augmented state consists of the current permutation
$\sigma\in S_n$ together with auxiliary variables
\[
(\rho,m,b,i),
\]
where:
\begin{itemize}
  \item $\rho\in S_n$ is the sampled direction;
  \item $m\in\mathbb N$ is the number of doublings;
  \item $b\in\{0,\dots,2^m-1\}$ is the rightmost index of the final orbit;
  \item $i\in\{a,\dots,b\}$ is the selected index, with $a=b-2^m+1$.
\end{itemize}
The orbit itself is not an independent coordinate: it is deterministically
given by
\[
\mathcal O = (\sigma\circ\rho^k)_{k=a}^b .
\]
Thus the augmented space is
\[
\mathbb A := S_n \times S_n \times \mathbb N \times \mathbb N \times \mathbb Z ,
\]
with the understanding that all probability kernels below are implicitly
restricted to admissible tuples produced by the NURS construction (i.e.,
those consistent with the randomized doubling and stopping rules).

Each NURS transition from a state $\sigma \in S_n$ proceeds in two steps:
\begin{enumerate}
  \item \textbf{Auxiliary variable refresh.} Given the current state $\sigma \in S_n$, we:
  \begin{itemize}
    \item draw a random direction $\rho \sim q( \cdot \mid \sigma)$ according to an \emph{orbit-equivariant direction law} (see Definition~\ref{def:direction-law}),
    \item construct an orbit $\mathcal{O} = (\sigma \circ \rho^k)_{k=a}^b$ by randomized doubling,
    \item select an index $i \in \{a, \dots, b\}$ from a categorical distribution with weights equal to $w(\sigma \circ \rho^i)$ where $w$ is the weight function defined in \eqref{eq:weights}.
  \end{itemize}
  
  \item \textbf{Involution.} Apply the mapping $\Psi: \mathbb{A} \to \mathbb{A}$ defined by  \begin{equation}
  \Psi(\sigma, \rho, m, b, i) = (\sigma \circ \rho^i, \rho, m, b - i, -i).
  \end{equation}
The next state is set to $\sigma' = \sigma \circ \rho^i$, and the auxiliary variables are discarded.
\end{enumerate}
No Metropolis correction is required, since as shown below, $\Psi$ preserves the joint distribution on~$\mathbb{A}$.

The map $\Psi$ re-centers the orbit so that the selected index $i$ becomes the new origin. Lemma~\ref{lem:involution} confirms that $\Psi$ is an involution on $\mathbb{A}$.   In particular, $\Psi$ maps admissible augmented states to admissible augmented
states, since the NURS stopping and sub-stopping rules are invariant under
re-centering of the orbit.

Lemmas~\ref{lem:orbit_kernel} and \ref{lem:index_kernel} characterize the conditional distributions used in the auxiliary variable refresh step: the orbit selection kernel $p_{\orbit}$ gives the conditional distribution of the orbit parameters $(m, b)$ given $(\sigma, \rho)$, and the index selection kernel $p_{\ind}$ gives the conditional distribution of index $i$ given $(\sigma, \rho, m, b)$. Together with the Mallows distribution and the direction distribution on $S_n$, these define the extended target on $\mathbb{A}$:
\begin{equation} \label{eq:joint}
    p_{\text{joint}}(z) = P_{\beta}(\sigma)
\times
q(\rho \mid \sigma)
\times
p_{\orbit}\bigl(m, b \mid \sigma,\rho\bigr)
\times
p_{\ind}(i \mid \sigma, \rho, m, b).
\end{equation}

The key remaining step is to verify that $\Psi$ preserves this joint distribution.  
Lemma~\ref{lem:invariance} establishes precisely this invariance:
\[
p_{\text{joint}} \circ \Psi = p_{\text{joint}}.
\] 
As a result, NURS is rejection-free: the proposal $\sigma \circ \rho^i$ is always accepted and becomes the next state of the chain. With this invariance in place, all conditions of Theorem~\ref{thm:AVM_reversibility_discrete} are satisfied, and we conclude that the NURS transition kernel is reversible with respect to the Mallows distribution. \end{proof}

\begin{lemma} \label{lem:involution}
The mapping $\Psi$ is an involution on $\mathbb{A}$.
\end{lemma}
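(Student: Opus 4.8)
The plan is to verify directly that $\Psi$ is a well-defined self-map of $\mathbb{A}$ and that composing it with itself returns the identity. First I would check well-definedness on the admissible tuples. Given $(\sigma,\rho,m,b,i)$ produced by the NURS construction, the leftmost index of the final orbit is $a = b - 2^m + 1$, and since the orbit is built by doubling outward from the base point $\sigma = \sigma\circ\rho^{0}$ (Algorithm~\ref{alg:nurs-transition} initializes $a\gets 0$, $b\gets 0$), one always has $a \le 0 \le b$. Hence $i \in \{a,\dots,b\}$ forces $0 \le b - i \le b - a = 2^m - 1$, so the image $\Psi(\sigma,\rho,m,b,i) = (\sigma\circ\rho^{i},\rho,m,b-i,-i)$ again has a "$b$"-coordinate in $\{0,\dots,2^m-1\}$ and implied leftmost index $(b-i) - 2^m + 1 = a - i$; moreover the new selected index $-i$ satisfies $a - i \le -i \le b - i$ (equivalently $a \le 0 \le b$), so it lies in the valid range. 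Thus $\Psi$ maps $\mathbb{A}$ into $\mathbb{A}$.

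Next I would compute $\Psi\circ\Psi$ by substitution. Writing $(\sigma',\rho',m',b',i') = \Psi(\sigma,\rho,m,b,i) = (\sigma\circ\rho^{i},\rho,m,b-i,-i)$ and applying $\Psi$ once more gives
\[
\Psi(\sigma',\rho',m',b',i') = \big(\sigma'\circ(\rho')^{i'},\,\rho',\,m',\,b'-i',\,-i'\big)
= \big((\sigma\circ\rho^{i})\circ\rho^{-i},\,\rho,\,m,\,(b-i)-(-i),\,i\big),
\]
which simplifies to $(\sigma\circ\rho^{0},\rho,m,b,i) = (\sigma,\rho,m,b,i)$, using $\rho^{i}\rho^{-i} = \mathrm{id}$ and $(b-i)+i = b$. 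Hence $\Psi^{2} = \mathrm{id}$, so $\Psi$ is an involution.

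One bookkeeping remark worth including: since the orbit is the deterministic list $\mathcal{O} = (\sigma\circ\rho^{k})_{k=a}^{b}$, re-centering at index $i$ replaces it with $(\sigma'\circ\rho^{k})_{k=a-i}^{b-i} = (\sigma\circ\rho^{k+i})_{k=a-i}^{b-i} = (\sigma\circ\rho^{\ell})_{\ell=a}^{b} = \mathcal{O}$, so $\Psi$ preserves the underlying orbit (as an ordered list) and only relabels which element is the origin. I do not expect a genuine obstacle here — the sole point requiring care is the index-range arithmetic above, together with the structural fact that $0\in\{a,\dots,b\}$, which is precisely what makes $-i$ a legitimate selected index in the image. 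The deeper statement that $\Psi$ carries \emph{admissible} tuples (those consistent with the \textsc{Stop}/\textsc{SubStop} rules and with $\operatorname{supp} q$) to admissible tuples I would defer: it is exactly the invariance of the stopping rules and of $q$ under re-centering, which is handled when proving $p_{\joint}\circ\Psi = p_{\joint}$ in Lemma~\ref{lem:invariance}.
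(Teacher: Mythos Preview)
Your proof is correct and follows essentially the same approach as the paper: both arguments reduce to the direct substitution showing $\Psi^2=\mathrm{id}$ coordinatewise. Your additional index-range verification (that $b-i\in\{0,\dots,2^m-1\}$ and $-i\in\{a-i,\dots,b-i\}$) and the orbit-preservation remark are sound extras the paper omits, since it formally takes $\mathbb{A}=S_n\times S_n\times\mathbb{N}\times\mathbb{N}\times\mathbb{Z}$ and defers admissibility to the stopping-rule invariance discussion.
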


\begin{proof}
Let $z = (\sigma, \rho, m, b, i) \in \mathbb{A}$. By definition of $\Psi$, we have
\[
\Psi(z) = (\sigma \circ \rho^i, \rho, m, b - i, -i).
\]
Applying $\Psi$ again gives
\[
\Psi(\Psi(z)) = (\sigma \circ \rho^i \circ \rho^{-i}, \rho, m, b - i - (-i), -(-i)) = (\sigma, \rho, m, b, i) = z.
\]
Hence, $\Psi \circ \Psi = \mathrm{id}$ on $\mathbb{A}$, and $\Psi$ is an involution.
\end{proof}

Below, $M \in \mathbb{N}$ denotes the maximum number of doublings, $\varepsilon > 0$ denotes the density threshold used in the no-underrun condition from Definition~\ref{defn:no-underrun}, $w$ is the unnormalized Mallows density defined in~\eqref{eq:weights}, and $\textsc{Stop}$ and $\textsc{SubStop}$ refer to the procedures described in Algorithms~\ref{alg:stop} and~\ref{alg:sub-stop}, respectively.

\medskip

\begin{lemma} \label{lem:orbit_kernel}
Let $(m, b)$ specify a NURS orbit
\[
\mathcal{O} = (\sigma \circ \rho^i)_{i = a}^b,
\qquad a = b - 2^m + 1.
\]
Then the orbit selection kernel of NURS satisfies
\[
p_{\orbit}(m, b \mid \sigma, \rho)
=
\frac{1}{2^m}
\cdot
\mathbf{1}\Biggl(
\Phi(\mathcal{O})
\;\wedge\;
\bigwedge_{s = 1}^{m} \; \bigwedge_{t = 1}^{2^s}
\neg \textsc{Stop}\!\left(\mathcal{O}_{s,t}, \varepsilon, w\right)
\Biggr),
\]
where $\{\mathcal{O}_{s,t}\}$ are the dyadic sub-orbits of $\mathcal{O}$
(Definition~\ref{def:dyadic-suborbit}), and where $\Phi(\mathcal{O})$ holds if and only if
\[
\textsc{Stop}(\mathcal{O}, \varepsilon, w)
\;\vee\;
\textsc{SubStop}(\mathcal{O}_+^{\ext}, \varepsilon, w)
\;\vee\;
\textsc{SubStop}(\mathcal{O}_-^{\ext}, \varepsilon, w)
\;\vee\;
|\mathcal{O}| = 2^M.
\]
The forward and backward extensions are defined by
\[
\mathcal{O}_+^{\ext} = (\sigma \circ \rho^i)_{i = b+1}^{b+2^m},
\qquad
\mathcal{O}_-^{\ext} = (\sigma \circ \rho^i)_{i = a - 2^m}^{a - 1}.
\]
\end{lemma}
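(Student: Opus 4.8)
The plan is to derive the formula for $p_{\orbit}(m,b\mid\sigma,\rho)$ by tracing the randomized doubling construction in Algorithm~\ref{alg:nurs-transition} and identifying exactly which sequences of coin flips produce the orbit $\mathcal{O}=(\sigma\circ\rho^i)_{i=a}^{b}$ with $a=b-2^m+1$. First I would observe that each of the $m$ executed doublings is governed by an independent fair coin $B_j\in\{0,1\}$, so any particular realized sequence of forward/backward choices occurs with probability $2^{-m}$; this accounts for the prefactor $1/2^m$. The crucial point is that the indicator must capture \emph{two} things: (i) that the construction actually terminates after exactly $m$ doublings having produced this specific orbit, and (ii) that no earlier termination occurred. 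Since the orbit after $j$ doublings is a contiguous dyadic block and the final orbit $\mathcal{O}$ has length $2^m$, the intermediate orbits are precisely certain dyadic sub-orbits $\mathcal{O}_{s,t}$ of $\mathcal{O}$; the requirement that the chain did not stop early translates into $\neg\textsc{Stop}(\mathcal{O}_{s,t},\varepsilon,w)$ for all the relevant sub-orbits, and one checks that ranging over all $s\in\{1,\dots,m\}$ and $t\in\{1,\dots,2^s\}$ is the correct (coin-sequence-averaged) way to express ``no \textsc{Stop} fired at any intermediate stage and no \textsc{SubStop} rejected any accepted extension.''

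Next I would handle termination at step $m$. The construction halts at doubling $m$ for one of exactly four reasons, which are collected in the predicate $\Phi(\mathcal{O})$: the final orbit itself satisfies $\textsc{Stop}$; or the next forward extension $\mathcal{O}_+^{\ext}$ (of length $2^m$) triggers $\textsc{SubStop}$ and is rejected; or the next backward extension $\mathcal{O}_-^{\ext}$ triggers $\textsc{SubStop}$; or the maximum length $2^M$ has been reached so the loop exits. Here I need the subtle observation that the event ``doubling $m+1$ would be rejected by \textsc{SubStop}'' does not depend on whether that $(m{+}1)$-st coin points forward or backward in the sense that \emph{either} outcome leads to termination with $\mathcal{O}$: if the forward extension is \textsc{SubStop}-rejected we stop, and if instead the coin pointed backward we would check $\mathcal{O}_-^{\ext}$, so to have $p_{\orbit}$ not depend on the (unflipped) $(m{+}1)$-st coin we need both branches to terminate, which is why $\Phi$ takes the disjunction over both extensions. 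I would also note the edge case: if $m=M$ the loop simply exits, so $|\mathcal{O}|=2^M$ is a valid terminating reason on its own.

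The main technical obstacle I anticipate is bookkeeping the correspondence between ``intermediate orbits visited during the doubling'' and ``dyadic sub-orbits of the final orbit,'' and in particular verifying that averaging over the coin sequence $B$ collapses the per-realization early-termination conditions into the clean symmetric product $\bigwedge_{s=1}^m\bigwedge_{t=1}^{2^s}\neg\textsc{Stop}(\mathcal{O}_{s,t},\varepsilon,w)$. Concretely, for a fixed coin sequence the orbit after $j<m$ doublings is one specific block of length $2^j$, and the \textsc{Stop}-check applies only to that block and the \textsc{SubStop}-check only to the extensions actually appended; but different coin sequences realize different dyadic blocks, and I must argue that the \emph{union} over all admissible coin sequences that yield $\mathcal{O}$, weighted by $2^{-m}$ each, produces a kernel value that is $2^{-m}$ times the indicator with the full symmetric conjunction. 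I would argue this by noting that $\neg\textsc{SubStop}$ of an accepted extension is equivalent to $\neg\textsc{Stop}$ on that extension \emph{and} on all its dyadic sub-orbits, and that the collection of all such (extension, sub-orbit) pairs arising across all valid coin sequences, together with the sub-orbits of the initial singleton-grown blocks, is exactly the family $\{\mathcal{O}_{s,t}: 1\le s\le m,\ 1\le t\le 2^s\}$ — so the conjunction is coin-sequence-independent, and summing $2^{-m}$ over the $\mathbf{1}(\Phi(\mathcal{O}))$-many surviving sequences recovers the stated expression. The remaining steps — confirming $\Phi$ exhausts the termination cases and that all sub-orbit/extension blocks are among the $\mathcal{O}_{s,t}$ — are then routine structural checks on the binary-tree decomposition from Definition~\ref{def:dyadic-suborbit}.
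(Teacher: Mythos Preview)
Your proposal contains a genuine conceptual gap. You repeatedly speak of ``averaging over the coin sequence $B$,'' of ``the \emph{union} over all admissible coin sequences that yield $\mathcal{O}$,'' and of ``summing $2^{-m}$ over the $\mathbf{1}(\Phi(\mathcal{O}))$-many surviving sequences,'' as though several length-$m$ coin sequences could produce the same final orbit $(m,b)$. They cannot. The key observation in the paper's proof --- which you are missing --- is that the rightmost index $b\in\{0,\dots,2^m-1\}$ \emph{uniquely} determines the coin sequence via the binary expansion $b=\sum_{j=1}^m B_j\,2^{j-1}$. There is exactly one sequence $B\in\{0,1\}^m$ consistent with the orbit $\mathcal{O}$, and the factor $2^{-m}$ is simply the probability of that single sequence.

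Once this bijection is in hand, your anticipated ``main technical obstacle'' disappears. For the unique coin sequence determined by $b$, the algorithm performs a \textsc{SubStop} check on each accepted extension $\mathcal{O}^{\mathrm{ext}}_j$ (equivalently, $\neg\textsc{Stop}$ on every dyadic sub-orbit of that extension) and a \textsc{Stop} check on each intermediate orbit $\mathcal{O}_j$ for $j<m$. A short induction on the binary tree of Definition~\ref{def:dyadic-suborbit} shows that the sub-orbits tested along this \emph{single} path exhaust the family $\{\mathcal{O}_{s,t}:1\le s\le m,\ 1\le t\le 2^s\}$; no union over coin sequences is needed. Separately, your treatment of the $(m{+}1)$-st coin is internally inconsistent: you write that ``we need both branches to terminate'' and immediately claim this is ``why $\Phi$ takes the disjunction over both extensions,'' but requiring both to terminate would be a conjunction, not a disjunction. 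The paper's own proof is admittedly terse on this point, simply asserting that $\Phi(\mathcal{O})$ characterizes halting at level $m$, but your reformulation does not clarify it.
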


This result follows directly from the randomized doubling scheme used in NURS. At each doubling, the algorithm chooses to double forward or backward with equal probability and hence a factor of $1/2^m$ for each orbit of length $2^m$. The stopping criterion for the final orbit is described by the logical condition above. 

\begin{proof}
Fix $\sigma\in S_n$ and a direction $\rho$.
During a NURS transition, the orbit is constructed by a randomized doubling
procedure. At each doubling level $j=1,\dots,m$, the algorithm chooses to extend
the current orbit forward or backward, each with equal probability $1/2$.

Let $\mathcal{O} = (\sigma \circ \rho^i)_{i=a}^b$ be a final orbit of length $2^m$,
with $a=b-2^m+1$. The rightmost index $b$ uniquely determines a binary sequence
$B=(B_1,\dots,B_m)\in\{0,1\}^m$ via
\[
b = \sum_{j=1}^m B_j\,2^{j-1},
\]
where $B_j=1$ (resp.\ $0$) indicates that the $j$-th doubling was performed in
the forward (resp.\ backward) direction; see Figure~\ref{fig:binary}.
Since each doubling direction is chosen independently and uniformly, the
probability of generating this specific sequence $B$, and hence the orbit
$\mathcal{O}$, is exactly $2^{-m}$.

The orbit $\mathcal{O}$ is selected by the NURS procedure if and only if the
doubling construction halts at level $m$.
By Algorithm~\ref{alg:nurs-transition}, this occurs precisely when the condition
$\Phi(\mathcal{O})$ holds, i.e., when at least one of the following events occurs:
\begin{itemize}
\item $\mathcal{O}$ itself satisfies the \textsc{Stop} condition;
\item extending $\mathcal{O}$ forward or backward would trigger the
      \textsc{SubStop} condition;
\item the maximum number of doublings $M$ has been reached.
\end{itemize}

Moreover, since the orbit construction starts from a single state and proceeds
incrementally through the dyadic doubling tree, every dyadic sub-orbit arising
during the construction has already been tested and rejected by the stopping
rule. Consequently, no dyadic sub-orbit of $\mathcal{O}$ satisfies the
\textsc{Stop} condition.

Therefore, the joint probability of selecting $(m,b)$ is equal to $2^{-m}$ if
$\Phi(\mathcal{O})$ holds and no dyadic sub-orbit satisfies \textsc{Stop}, and is
zero otherwise. This yields the claimed expression for
$p_{\orbit}(m,b\mid\sigma,\rho)$.
\end{proof}

\begin{lemma} \label{lem:index_kernel}
Let $(\sigma,\rho,m,b)$ specify a NURS orbit
\[
\mathcal{O}=(\sigma\circ\rho^j)_{j=a}^b,
\qquad a=b-2^m+1.
\]
Conditional on $(\sigma,\rho,m,b)$, the index selected by NURS satisfies
\[
p_{\ind}(i \mid \sigma, \rho, m, b)
=
\frac{P_{\beta}(\sigma \circ \rho^i)}
{\sum_{j=a}^{b} P_{\beta}(\sigma \circ \rho^j)}
=
\frac{w(\sigma \circ \rho^i)}
{\sum_{j=a}^{b} w(\sigma \circ \rho^j)},
\qquad i\in\{a,\dots,b\},
\]
and $p_{\ind}(i \mid \sigma, \rho, m, b)=0$ for $i\notin\{a,\dots,b\}$.
\end{lemma}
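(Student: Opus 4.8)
The statement is essentially the definitional unwinding of the final line of Algorithm~\ref{alg:nurs-transition} (the categorical resampling step \texttt{sample }$\sigma'\sim\cat(\mathcal{O},w)$), specialized to the augmented-variable description used in the GIST embedding of Theorem~\ref{thm:main-reversibility}, where the selected \emph{index} $i$ rather than the permutation $\sigma'=\sigma\circ\rho^i$ is recorded as an auxiliary coordinate. So the plan is simply to trace through how $i$ is produced once $(\sigma,\rho,m,b)$ has been fixed, and then rewrite the resulting categorical probability in terms of $P_\beta$.

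\textbf{Key steps.} First, I would note that conditioning on the tuple $(\sigma,\rho,m,b)$ pins down the orbit completely: by Definition~\ref{defn:orbit}, with $a=b-2^m+1$, we have $\mathcal{O}=(\sigma\circ\rho^j)_{j=a}^{b}$ as an \emph{ordered} list of length $2^m$, so the orbit is a deterministic function of $(\sigma,\rho,m,b)$ and no further randomness in the doubling construction remains once these are conditioned on. Second, the only remaining source of randomness in the NURS transition is the state-selection step, which — in the index-valued form used for the GIST augmentation — draws $i$ from the categorical law over the positions $\{a,\dots,b\}$ of the ordered orbit with weight attached to position $j$ equal to $w(\sigma\circ\rho^j)$; in particular positions are weighted with multiplicity, so repeated permutations along the orbit (which can occur when $2^m>\ord(\rho)$) are treated as distinct draws. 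Third, applying the categorical-distribution formula \eqref{eq:cat} with $S=\{a,\dots,b\}$ and $f(j)=w(\sigma\circ\rho^j)$ gives
\[
p_{\ind}(i\mid\sigma,\rho,m,b)=\frac{w(\sigma\circ\rho^i)}{\sum_{j=a}^{b}w(\sigma\circ\rho^j)},\qquad i\in\{a,\dots,b\},
\]
and $p_{\ind}(i\mid\sigma,\rho,m,b)=0$ for $i\notin\{a,\dots,b\}$ since such indices do not appear in $\mathcal{O}$. Finally, since $P_\beta(\tau)=w(\tau)/Z_\beta$ with the same normalizing constant $Z_\beta$ for every $\tau\in S_n$ by \eqref{eq:mallows} and \eqref{eq:weights}, multiplying numerator and denominator by $Z_\beta^{-1}$ shows the displayed ratio equals $P_\beta(\sigma\circ\rho^i)\big/\sum_{j=a}^{b}P_\beta(\sigma\circ\rho^j)$, which is the first expression in the lemma.

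\textbf{Main obstacle.} There is no real analytic difficulty here; the statement is a direct consequence of the algorithm's definition. The only point requiring care is bookkeeping: one must be explicit that the categorical draw is over the index set $\{a,\dots,b\}$ (positions in the ordered tuple), not over the set of distinct permutations visited, so that the normalization is $\sum_{j=a}^{b}w(\sigma\circ\rho^j)$ counted with multiplicity — this is exactly the convention under which the re-centering involution $\Psi$ of Theorem~\ref{thm:main-reversibility} acts bijectively on indices, and it is the version needed for the invariance statement in the (forthcoming) Lemma~\ref{lem:invariance}.
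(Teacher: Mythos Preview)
Your proposal is correct and follows essentially the same approach as the paper: both arguments simply unwind the definition of the categorical resampling step in Algorithm~\ref{alg:nurs-transition} and note that the ratio is unchanged when $w$ is replaced by $P_\beta$. Your version is somewhat more explicit about the index-versus-permutation bookkeeping (in particular, the multiplicity point when $2^m>\ord(\rho)$), but this is a refinement of presentation rather than a different route.
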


\begin{proof}
    By definition of the NURS algorithm, the index $i$ is drawn from the categorical distribution over $a:b$ with unnormalized weights $(P_\beta(\sigma \circ \rho^j))_{j=a}^b$. Therefore, the selection probability is
\[
p_{\ind}(i \mid \sigma, \rho, m, b) 
= \frac{P_{\beta}(\sigma \circ \rho^i)}{\sum_{j = a}^b P_{\beta}(\sigma \circ \rho^j)} = \frac{w(\sigma \circ \rho^i)}{\sum_{j = a}^b w(\sigma \circ \rho^j)} , 
\]
as claimed.
\end{proof}

\begin{lemma} \label{lem:invariance}
For every $z\in\mathbb A$ with $p_{\joint}(z)>0$, it holds: $p_{\joint} (\Psi(z)) = p_{\joint}(z)$.
\end{lemma}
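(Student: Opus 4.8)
The plan is to verify the identity $p_{\joint}(\Psi(z)) = p_{\joint}(z)$ by examining how each of the four factors in \eqref{eq:joint} transforms under $\Psi(\sigma,\rho,m,b,i) = (\sigma\circ\rho^i,\rho,m,b-i,-i)$, and showing that the product is unchanged. Write $\sigma' = \sigma\circ\rho^i$, $b' = b-i$, $a' = b'-2^m+1 = a-i$, and $i' = -i$. The crucial observation, used repeatedly, is that the orbit is re-centered but not changed as a set: $\mathcal O' := (\sigma'\circ\rho^k)_{k=a'}^{b'} = (\sigma\circ\rho^{k+i})_{k=a-i}^{b-i} = (\sigma\circ\rho^\ell)_{\ell=a}^{b} = \mathcal O$, and moreover the dyadic sub-orbit decomposition of $\mathcal O'$ coincides with that of $\mathcal O$ (the index shift by $i$ maps the $t$-th level-$s$ block to itself as an ordered tuple). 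I would state this re-centering fact first as the backbone of the argument.

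First, the Mallows factor: $P_\beta(\sigma') = w(\sigma\circ\rho^i)/Z_\beta$ and $q(\rho\mid\sigma') = q(\rho\mid\sigma)$ by orbit-equivariance (Definition~\ref{def:direction-law}), since $\sigma'$ lies on the $\rho$-orbit through $\sigma$ and $\rho\in\operatorname{supp}(q(\cdot\mid\sigma))$. Second, the orbit factor: by Lemma~\ref{lem:orbit_kernel}, $p_{\orbit}(m,b'\mid\sigma',\rho) = 2^{-m}\mathbf 1(\cdots)$ where the indicator is a boolean function only of the weights along $\mathcal O'$ and its forward/backward extensions $\mathcal O'^{\ext}_{\pm}$; since $\mathcal O' = \mathcal O$ with identical dyadic sub-orbits, and since the extensions also coincide ($\mathcal O'^{\ext}_+ = (\sigma'\circ\rho^k)_{k=b'+1}^{b'+2^m} = (\sigma\circ\rho^\ell)_{\ell=b+1}^{b+2^m} = \mathcal O^{\ext}_+$, similarly for $-$), the indicator is unchanged, and $2^{-m}$ is manifestly invariant, so $p_{\orbit}(m,b'\mid\sigma',\rho) = p_{\orbit}(m,b\mid\sigma,\rho)$. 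Third, the index factor: by Lemma~\ref{lem:index_kernel}, $p_{\ind}(i'\mid\sigma',\rho,m,b') = w(\sigma'\circ\rho^{i'}) / \sum_{j=a'}^{b'} w(\sigma'\circ\rho^j)$. The numerator is $w(\sigma\circ\rho^i\circ\rho^{-i}) = w(\sigma)$, and the denominator is $\sum_{\ell=a}^{b} w(\sigma\circ\rho^\ell)$ by the same index shift. Hence $p_{\ind}(i'\mid\sigma',\rho,m,b') = w(\sigma)/\sum_{\ell=a}^b w(\sigma\circ\rho^\ell)$.

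Now I multiply the four transformed factors. The product of the new Mallows factor and the new index factor is $\bigl(w(\sigma\circ\rho^i)/Z_\beta\bigr)\cdot\bigl(w(\sigma)/\sum_{\ell=a}^b w(\sigma\circ\rho^\ell)\bigr)$, whereas in $p_{\joint}(z)$ the corresponding product is $\bigl(w(\sigma)/Z_\beta\bigr)\cdot\bigl(w(\sigma\circ\rho^i)/\sum_{\ell=a}^b w(\sigma\circ\rho^\ell)\bigr)$; these are equal since $w(\sigma)w(\sigma\circ\rho^i)$ is symmetric in the two and the normalizers match. The direction factor and the orbit factor are individually unchanged. Therefore $p_{\joint}(\Psi(z)) = p_{\joint}(z)$. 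I should also remark that $p_{\joint}(z)>0$ guarantees admissibility of $z$ (so the indicator in $p_{\orbit}$ is $1$ and the denominators are nonzero), and that by Lemma~\ref{lem:involution} $\Psi(z)$ is then admissible as well, so both sides are genuine positive masses.

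The main obstacle is the bookkeeping in the second step: one must be careful that the dyadic sub-orbit partition is a structural (index-position) notion attached to the ordered tuple, so that the pure index shift $k\mapsto k+i$ identifies the level-$s$, block-$t$ sub-orbit of $\mathcal O'$ with that of $\mathcal O$ as ordered lists of permutations — and likewise that the candidate extensions $\mathcal O^{\ext}_\pm$ are mapped to each other (not swapped), so that the disjunction $\textsc{SubStop}(\mathcal O^{\ext}_+)\vee\textsc{SubStop}(\mathcal O^{\ext}_-)$ in $\Phi$ is preserved. Once this invariance of $\Phi$ and of the no-\textsc{Stop}-on-sub-orbits conjunction is pinned down, the remaining algebra is the short symmetric-product cancellation above.
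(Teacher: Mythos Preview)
Your proof is correct and follows essentially the same approach as the paper: both verify invariance of each factor in \eqref{eq:joint} under $\Psi$, using orbit-equivariance for $q$, re-centering invariance for $p_{\orbit}$, and the symmetric cancellation $P_\beta(\sigma)\,P_\beta(\sigma\circ\rho^i)$ between the Mallows and index factors. Your treatment of the $p_{\orbit}$ factor is in fact slightly more explicit than the paper's, since you spell out that the dyadic sub-orbits and the forward/backward extensions coincide under the index shift, whereas the paper simply asserts that the stopping and sub-stopping rules are invariant under re-centering.
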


\begin{proof}
Fix $z=(\sigma,\rho,m,b,i)\in\mathbb{A}$ such that
\[
p_{\joint}(z)>0.
\]
Equivalently, the orbit
\[
\mathcal{O}=(\sigma\circ\rho^j)_{j=a}^b,
\qquad a=b-2^m+1,
\]
is the terminal orbit produced by the NURS doubling procedure, meaning that
\[
\Phi(\mathcal{O})
:= \textsc{Stop}(\mathcal{O},\varepsilon,w)
\ \vee\ 
\textsc{SubStop}(\mathcal{O}_+^{\ext},\varepsilon,w)
\ \vee\ 
\textsc{SubStop}(\mathcal{O}_-^{\ext},\varepsilon,w)
\ \vee\ 
|\mathcal{O}|=2^M
\]
holds, and no strict dyadic sub-orbit $\mathcal{O}_{s,t}\subsetneq\mathcal{O}$
satisfies the stopping rule.

By construction, the NURS stopping and sub-stopping rules depend only on the
collection of states visited along the orbit and their weights, and are
invariant under re-centering of the orbit index.
Consequently,
\[
p_{\orbit}(m,b-i \mid \sigma\circ\rho^i,\rho)
=
p_{\orbit}(m,b \mid \sigma,\rho).
\]

We now compute $p_{\joint}(\Psi(z))$. Using \eqref{eq:joint},
\begin{align*}
p_{\joint}(\Psi(z))
&\overset{\eqref{eq:joint}}{=}
P_{\beta}(\sigma\circ\rho^i)\ \cdot\ q(\rho\mid\sigma\circ\rho^i)\ \cdot\
p_{\orbit}(m,b-i\mid\sigma\circ\rho^i,\rho)\ \cdot\
p_{\ind}(-i\mid\sigma\circ\rho^i,\rho,m,b-i)\\
&\overset{\text{Lemma~\ref{lem:orbit_kernel}}}{=}
P_{\beta}(\sigma\circ\rho^i)\ \cdot\ q(\rho\mid\sigma\circ\rho^i)\ \cdot\
\frac{1}{2^m}\ \cdot\
p_{\ind}(-i\mid\sigma\circ\rho^i,\rho,m,b-i)\\
&\overset{\text{Lemma~\ref{lem:index_kernel}}}{=}
P_{\beta}(\sigma\circ\rho^i)\ \cdot\ q(\rho\mid\sigma\circ\rho^i)\ \cdot\
\frac{1}{2^m}\ \cdot\
\frac{P_{\beta}(\sigma)}{\sum_{j=a-i}^{\,b-i} P_{\beta}(\sigma\circ\rho^{\,i+j})}\,.
\end{align*}
Here the denominator
\[
\sum_{j=a-i}^{\,b-i} P_{\beta}(\sigma\circ\rho^{\,i+j})
\]
is the normalization constant of the index-selection kernel
$p_{\ind}(-i \mid \sigma\circ\rho^i,\rho,m,b-i)$ associated with the
re-centered orbit.

Next, invoke the \emph{orbit–equivariance} of the direction law (Definition~\ref{def:direction-law}):
\[
q(\,\cdot\mid\sigma\,)=q(\,\cdot\mid\sigma\circ\rho^k\,)\quad\text{for all }k,
\]
and in particular $q(\rho\mid\sigma\circ\rho^i)=q(\rho\mid\sigma)$.

Also, change variables in the denominator via $j' = i+j$, which maps
$j\in[a-i,b-i]$ to $j'\in[a,b]$. Hence
\[
\sum_{j=a-i}^{b-i} P_{\beta}(\sigma\circ\rho^{\,i+j})
=\sum_{j'=a}^{b} P_{\beta}(\sigma\circ\rho^{\,j'}).
\]
Substituting these two identities gives
\begin{align*}
p_{\joint}(\Psi(z))
&= P_{\beta}(\sigma\circ\rho^i)\ \cdot\ q(\rho\mid\sigma)\ \cdot\ \frac{1}{2^m}\ \cdot\
\frac{P_{\beta}(\sigma)}{\sum_{j=a}^{b} P_{\beta}(\sigma\circ\rho^{\,j})} \\
&= P_{\beta}(\sigma)\ \cdot\ q(\rho\mid\sigma)\ \cdot\ \frac{1}{2^m}\ \cdot\
\frac{P_{\beta}(\sigma\circ\rho^i)}{\sum_{j=a}^{b} P_{\beta}(\sigma\circ\rho^{\,j})}
\overset{\eqref{eq:joint}}{=} p_{\joint}(z).
\end{align*}
Therefore $p_{\joint}(\Psi(z)) = p_{\joint}(z)$ for every $z$ with $p_{\joint}(z)>0$, as claimed.
\end{proof}

\begin{figure}[t]
  \centering
\begin{tikzpicture}[
  grow = down,
  scale=0.8,
  edge from parent/.style = {draw},
  level 1/.style = {sibling distance=8cm, level distance=1.5cm},
  level 2/.style = {sibling distance=4cm, level distance=1.5cm},
  level 3/.style = {sibling distance=2cm, level distance=1.5cm},
  every node/.style = {align=center, font=\small}
]

\node {$\textcolor{red}{(\sigma\circ\rho^{-2}, \dotsc, \sigma\circ\rho^{5})}$}
  child {
    node {$\textcolor{red}{(\sigma\circ\rho^{-2},\sigma\circ\rho^{-1}, \sigma,\sigma\circ\rho^{1})}$}
    child {
      node {$(\sigma\circ\rho^{-2},\sigma\circ\rho^{-1})$}
      child {
        node {$\sigma\circ\rho^{-2}$}
        edge from parent node[left] {1}
      }
      child {
        node {$\sigma\circ\rho^{-1}$}
        edge from parent node[right] {0}
      }
      edge from parent node[left] {1}
    }
    child {
      node {$\textcolor{red}{(\sigma,\sigma\circ\rho^{1})}$}
      child {
        node {$\textcolor{red}{\sigma}$}
        edge from parent node[left,red] {\textbf{1}}
      }
      child {
        node {$\sigma\circ\rho^{1}$}
        edge from parent node[right] {0}
      }
      edge from parent node[right,red] {\textbf{0}}
    }
    edge from parent node[left,above,red] {\textbf{1}}
  }
  child {
    node {$(\sigma\circ\rho^{2},\sigma\circ\rho^{3}, \sigma\circ\rho^{4},\sigma\circ\rho^{5})$}
    child {
      node {$(\sigma\circ\rho^{2},\sigma\circ\rho^{3})$}
      child {
        node {$\sigma\circ\rho^{2}$}
        edge from parent node[left] {1}
      }
      child {
        node {$\sigma\circ\rho^{3}$}
        edge from parent node[right] {0}
      }
      edge from parent node[left] {1}
    }
    child {
      node {$(\sigma\circ\rho^{4},\sigma\circ\rho^{5})$}
      child {
        node {$\sigma\circ\rho^{4}$}
        edge from parent node[left] {1}
      }
      child {
        node {$\sigma\circ\rho^{5}$}
        edge from parent node[right] {0}
      }
      edge from parent node[right] {0}
    }
    edge from parent node[right,above] {0}
  }
;
\end{tikzpicture}

\caption{\textbf{Binary-tree representation of the dyadic orbit construction.}   Starting from $\sigma$, each bit $B_j\in\{0,1\}$ specifies a forward ($1$) or backward ($0$) doubling at level $j$. 
  The concatenation $B=(B_1,\dots,B_m)$ determines the rightmost index $b=\sum_{j=1}^m B_j\,2^{\,j-1}$ and hence the orbit interval $[a,b]$ with $a=b-2^m+1$. 
  The highlighted path illustrates the case $B=(1,0,1)$, yielding rightmost index $b=5$ and orbit interval $[a,b]=[-2,5]$. } \label{fig:binary}
\end{figure}
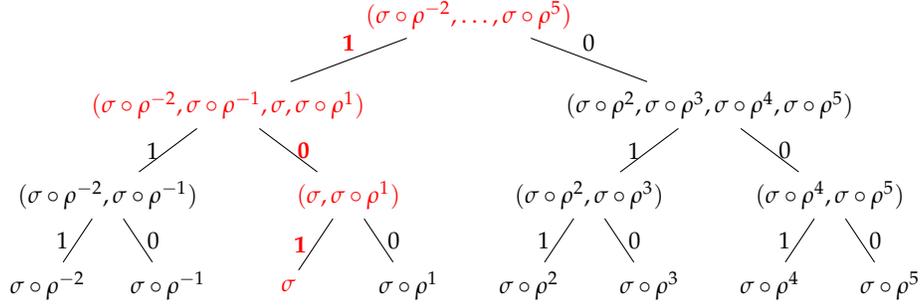

\section{Mixing Time Analysis}
\label{sec:coupling}

This section derives quantitative mixing-time bounds for the discrete
No-Underrun Sampler (NURS) targeting the Mallows$(d,\sigma_0)$ distribution,
using the path-coupling method. Our analysis is conducted on the transposition
Cayley graph of $S_n$, endowed with the Cayley distance
$d_{\mathrm{Cay}}$, so that neighboring states are pairs
$(\sigma,\sigma\circ\tau_{ij})$ differing by a single transposition. The central
task is to control the expected one-step evolution of this distance under a
suitably constructed coupling of the Markov chain.

Throughout this section, we fix a concrete \emph{shiftable} direction law and
analyze an idealized NURS transition kernel in which the orbit window is taken
to be the full order of the sampled direction $\eta$ and the
stopping and sub-stopping rules are suppressed. This idealized kernel coincides
with the NURS transition when the orbit window is taken to be of full length.

The argument proceeds as follows. First, the direction law is chosen so that,
whenever the sampled transposition index $(I,J)$ matches the edge $(i,j)$ under
consideration, the NURS orbits of the two coupled chains
coincide up to a deterministic index shift. On this \emph{aligned} event
$(I,J)=(i,j)$, we exploit the shift structure to construct a perfect one-step
coupling and quantify its contribution to contraction. On the complementary
\emph{mismatch} event $(I,J)\neq(i,j)$, we instead couple two orbit segments whose
corresponding points remain at Cayley distance one, using a maximal coupling of
the associated categorical distributions.

Combining these two cases yields an explicit edge-wise contraction bound for the
Cayley distance under any energy function satisfying a mild Lipschitz condition.
An application of the Path Coupling Theorem~\cite{BubleyDyer1997} then propagates
this local contraction to the entire state space $S_n$, leading to an
$O(n^2\log n)$ upper bound on the total-variation mixing time in a
high-temperature regime.

\subsection{Path Coupling Theorem}

We recall the path coupling theorem of Bubley and Dyer~\cite{BubleyDyer1997}.

\medskip

\begin{theorem}[Path coupling]\label{thm:path-coupling}
Let $G=(\mathcal{X},E)$ be a finite, connected, undirected graph whose edges
$\{u,v\}\in E$ are assigned lengths $l(u,v)\ge 1$, and let $d$ denote the
resulting shortest-path metric on $\mathcal{X}$. Let $K$ be a Markov kernel on
$\mathcal{X}$ with stationary distribution $\pi$. Suppose that for every edge
$\{u,v\}\in E$ there exists a coupling $(U,V)$ of $K(u,\cdot)$ and $K(v,\cdot)$
such that
\begin{equation}\label{eq:edge-contraction}
\EE_{u,v}\!\left[d(U,V)\right]
\;\le\;
d(u,v)\,e^{-\alpha},
\qquad \text{for some }\alpha>0.
\end{equation}
Then, for all $t\ge 0$,
\[
\max_{x\in\mathcal{X}}
\bigl\|K^t(x,\cdot)-\pi\bigr\|_{\TV}
\;\le\;
e^{-\alpha t}\,\diam(\mathcal{X},d),
\]
where $\diam(\mathcal{X},d):=\max_{x,y\in\mathcal{X}} d(x,y)$.
\end{theorem}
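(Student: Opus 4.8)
The plan is to bootstrap the edge-wise contraction hypothesis~\eqref{eq:edge-contraction} into a contraction of $K$ in the Wasserstein-$1$ metric $W_1$ induced by $d$ — that is, $W_1(\mu,\nu)=\inf\mathbb{E}[d(X,Y)]$ over couplings $(X,Y)$ of $\mu$ and $\nu$ — then iterate it, and finally bound total variation by $W_1$. For the first step, fix $x,y\in\mathcal{X}$ and choose a walk $x=z_0,z_1,\dots,z_r=y$ along edges of $G$ realizing the shortest-path distance, so that $\sum_{k=1}^{r} d(z_{k-1},z_k)=d(x,y)$. The hypothesis supplies, for each $k$, a coupling of $K(z_{k-1},\cdot)$ and $K(z_k,\cdot)$; by the gluing lemma for couplings these fit together on one probability space into $(Z_0,\dots,Z_r)$ with $Z_0\sim K(x,\cdot)$, $Z_r\sim K(y,\cdot)$, and each $(Z_{k-1},Z_k)$ distributed as the corresponding edge coupling. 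Then the triangle inequality and linearity of expectation give
\[
\mathbb{E}[d(Z_0,Z_r)]\le\sum_{k=1}^{r}\mathbb{E}[d(Z_{k-1},Z_k)]\le e^{-\alpha}\sum_{k=1}^{r} d(z_{k-1},z_k)=e^{-\alpha}d(x,y),
\]
so $W_1(K(x,\cdot),K(y,\cdot))\le e^{-\alpha}d(x,y)$ for all $x,y$.

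Next, a second application of the gluing lemma promotes this to $W_1(\mu K,\nu K)\le e^{-\alpha}W_1(\mu,\nu)$ for arbitrary probability measures $\mu,\nu$ (couple $\mu,\nu$ optimally, then apply the one-step couplings above from each realized pair). Since $\pi K=\pi$, iterating yields $W_1(K^t(x,\cdot),\pi)=W_1(K^t(x,\cdot),\pi K^t)\le e^{-\alpha t}W_1(\delta_x,\pi)\le e^{-\alpha t}\diam(\mathcal{X},d)$, the last step because $W_1(\delta_x,\pi)=\sum_y\pi(y)d(x,y)\le\diam(\mathcal{X},d)$. Finally, because every edge length is $\ge 1$, the shortest-path metric satisfies $d(u,v)\ge\mathbf{1}\{u\neq v\}$, and since $\|\mu-\nu\|_{\TV}=\inf_{(X,Y)}\mathbb{P}(X\neq Y)$ over couplings (the coupling inequality, with equality for a maximal coupling), we obtain $\|\mu-\nu\|_{\TV}\le W_1(\mu,\nu)$. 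Combining with the previous display and taking the maximum over $x\in\mathcal{X}$ gives the asserted bound.

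The only genuine subtlety is the gluing step: realizing a chain of pairwise couplings along a geodesic — and, in the iteration, a coupling composed with transition couplings — on a single probability space. In the countable discrete setting this is routine via conditional independence and explicit kernel composition, but it should be invoked with care. Everything else reduces to the triangle inequality, linearity of expectation, stationarity of $\pi$, and the elementary comparison $d\ge\mathbf{1}\{\cdot\neq\cdot\}$.
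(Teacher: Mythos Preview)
Your proof is correct and follows the standard route: extend the edge-wise contraction to all pairs via a geodesic and the gluing lemma, lift to a Wasserstein-$1$ contraction of $K$, iterate using $\pi K=\pi$, and finish with the comparison $\|\mu-\nu\|_{\TV}\le W_1(\mu,\nu)$ that follows from $l(u,v)\ge 1$. The paper does not supply its own proof of this statement; it is simply recalled as the Bubley--Dyer path coupling theorem and cited, so there is no alternative approach to compare against.
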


In our setting, the underlying graph $G=(\mathcal{X},E)$ is the transposition
Cayley graph of the symmetric group $S_n$. We take
\[
\mathcal{X} := S_n,
\qquad
\mathcal{T} := \{(i\,j) : 1 \le i < j \le n\},
\]
with edge set
\[
E
:= \bigl\{\,\{\sigma,\sigma\circ\tau\} : \sigma\in S_n,\ \tau\in\mathcal{T}\bigr\}.
\]
Each edge connects two permutations that differ by a single transposition and is
assigned unit length,
\[
l(\sigma,\sigma\circ\tau)=1.
\]
The induced path metric $\rho$ is therefore the Cayley distance
$d_{\mathrm{Cay}}$ with respect to the generating set $\mathcal{T}$. In
particular, the diameter of $(S_n,d_{\mathrm{Cay}})$ satisfies
\[
D_{\max}
:= \diam(S_n,d_{\mathrm{Cay}})
= \max_{\pi,\rho\in S_n} d_{\mathrm{Cay}}(\pi,\rho)
= n-1.
\]

\subsection{Shiftable Law and Kernel}

Throughout this section we work with \emph{shiftable} direction laws, which play a
central role in the mixing-time analysis.

\medskip

\begin{definition}[Shiftable directions]\label{def:Omegai}
For a transposition $\tau_{ij} = (i\,j)$ with $1 \le i < j \le n$, define the
associated direction set
\[
  \Omega_{ij}
  :=
  \Bigl\{
    \eta = \tau_{ij} h :
    h \in S_{[n]\setminus\{i,j\}},
    \text{ and every cycle of } h \text{ has odd length}
  \Bigr\},
\]
where
\[
  S_{[n]\setminus\{i,j\}}
  :=
  \bigl\{\, h \in S_n : h(i)=i,\ h(j)=j \,\bigr\}
  \cong S_{n-2}.
\]
A permutation $\eta \in S_n$ is called \emph{shiftable} if
$\eta \in \Omega_{ij}$ for some $1 \le i < j \le n$. A direction law $q$ on $S_n$
is called shiftable if its support is contained in
\[
  \Omega := \bigcup_{1 \le i < j \le n} \Omega_{ij}.
\]
\end{definition}

\medskip

For any $\eta \in \Omega_{ij}$ we may write $\eta = \tau_{ij} h$ with $h$ fixing
$i$ and $j$ and consisting only of odd-length cycles. Since $h$ commutes with
$\tau_{ij}$ and has odd order $m := \ord(h)$, it follows that
\[
  \ord(\eta) = 2m
  \qquad\text{and}\qquad
  \eta^{m} = \tau_{ij}.
\]
Accordingly, for any $\sigma \in S_n$ the full $\eta$-orbit has length $2m$ and
is given by
\[
  \bigl\{\, \sigma \eta^t : t = 0,1,\dots,2m-1 \,\bigr\}.
\]

\medskip

\begin{definition}[Shiftable kernel]\label{def:shiftable-kernel}
Let $K$ denote the NURS transition kernel induced by a shiftable direction law
$q$, and let $K^t$ denote its $t$-step iterate.
Given a current state $\sigma \in S_n$, one step of $K$ is constructed as
follows:
\begin{enumerate}
  \item Sample an index pair $(I,J)$ uniformly from
  $\{(i,j):1\le i<j\le n\}$, and then sample a direction
  $\eta \in \Omega_{IJ}$ according to a fixed, state-independent law.
  \item Let $2m := \ord(\eta)$ and set $W := \{0,1,\dots,2m-1\}$.
  Sample an index $r \in W$ with probability proportional to
  $w(\sigma \eta^{r})$, and update the state to $\sigma' = \sigma \eta^{r}$.
\end{enumerate}
For the coupling across an edge $(\sigma,\sigma\circ\tau_{ij})$ we use the same
random choices $((I,J),\eta)$ for both chains.
\end{definition}

\medskip

For a fixed edge $(\sigma,\sigma\circ\tau_{ij})$, it is convenient to write
\[
  X_t := \sigma \eta^{t},
  \qquad
  Y_t := \sigma\circ\tau_{ij}\,\eta^{t},
  \qquad t\in\mathbb{Z}.
\]
The relationship between the two orbits depends on whether the sampled index
pair $(I,J)$ coincides with the edge $(i,j)$ under consideration:
\begin{itemize}
  \item \emph{Aligned case} $(I,J)=(i,j)$.  
  In this case $\eta^m=\tau_{ij}$, and hence
  \[
    Y_t = X_{t+m},
  \]
  so the two orbits coincide up to a fixed index shift.
  \item \emph{Mismatch case} $(I,J)\neq(i,j)$.  
  In this case the two orbits need not align.  Nevertheless, for every $t\in\mathbb Z$,
\[
  X_t^{-1}Y_t
  =(\sigma\eta^{t})^{-1}\,(\sigma\circ\tau_{ij}\,\eta^{t})
  =\eta^{-t}\tau_{ij}\eta^{t},
\]
which is a transposition (a conjugate of $\tau_{ij}$). Consequently,
\[
  d_{\mathrm{Cay}}(X_t,Y_t)=1
  \qquad\text{for all }t,
\]
even though there is no fixed index shift relating the two orbits.
\end{itemize}

We analyze these two cases separately in
Sections~\ref{subsec:alignment} and~\ref{subsec:mismatch}.

\medskip

Finally, define the maximal \emph{cross-orbit diameter}
\[
D_{\mathrm{cross}}
:= \sup_{\sigma \in S_n}
   \max_{\eta \in \Omega}
   \max_{1 \le i < j \le n}
   \max_{s,t\in\{0,\dots,\ord(\eta)-1\}}
   d_{\mathrm{Cay}}\bigl(\sigma \eta^{s},\, \sigma \circ \tau_{ij} \eta^{t}\bigr).
\]
Clearly,
\[
D_{\mathrm{cross}} \;\le\; D_{\max} = \diam(S_n,d_{\mathrm{Cay}}) = n-1.
\]

\subsection{Mixing Result}

We now state the main quantitative result of the mixing-time analysis. The theorem
below establishes an explicit one-step contraction for the Cayley distance under
the shiftable NURS kernel, which in turn yields a total-variation mixing bound via
the path-coupling method. The result applies to an idealized version of the NURS
transition in which the orbit window is taken to be the full order of the sampled
direction and is stated under a mild Lipschitz regularity assumption on the
energy. All constants appearing in the contraction factor are made explicit.

\medskip

\begin{theorem}[Mixing time for the shiftable kernel]
\label{thm:main-mixing}
Let $K$ be the shiftable kernel from Definition~\ref{def:shiftable-kernel}, 
with stationary distribution $P_\beta$, and let $(I,J)$ be chosen uniformly from $\{(i,j):1\le i<j\le n\}$ at each step. Assume that the energy $E$ is Lipschitz with respect to the Cayley distance, i.e.
\begin{equation}
\label{eq:Lipschitz-again}
  |E(\pi)-E(\rho)|
  \;\le\;
  L_E\, d_{\mathrm{Cay}}(\pi,\rho)
  \qquad\text{for all }\pi,\rho\in S_n.
\end{equation}
In particular, whenever $\pi$ and $\rho$ differ by one transposition
($d_{\mathrm{Cay}}(\pi,\rho)=1$) we have $|E(\pi)-E(\rho)|\le L_E$.
Then for every adjacent pair $(\sigma,\sigma \circ \tau_{ij})$ there exists a one-step coupling $(U,V)$ of $K(\sigma,\cdot)$ and $K(\sigma \circ \tau_{ij},\cdot)$ such that
\[
  \mathbb{E}\bigl[d_{\mathrm{Cay}}(U,V)\bigr]
  \;\le\;
  \delta(\beta)
  \;:=\;
  \Bigl(1-\frac{2}{n(n-1)}\Bigr)
  \Bigl[\,1 + (D_{\mathrm{cross}}-1)\,\tanh(\beta L_E)\Bigr].
\]
When $\delta(\beta)<1$, the path–coupling condition \eqref{eq:edge-contraction} holds with $\rho=d_{\mathrm{Cay}}$ and $e^{-\alpha}=\delta(\beta)$. Consequently, for all $t\ge 0$,
\[
  \max_{\sigma\in S_n}
  \bigl\|K^t(\sigma,\cdot)-P_\beta\bigr\|_{\TV}
  \;\le\;
  D_{\max}\,\delta(\beta)^{\,t},
\]
and for every $\varepsilon\in(0,1)$ the $\varepsilon$–mixing time satisfies
\[
  t_{\mathrm{mix}}(\varepsilon)
  \;\le\;
  \frac{\log D_{\max} + \log(1/\varepsilon)}{-\log\delta(\beta)}.
\]
\end{theorem}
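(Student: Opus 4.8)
The plan is to establish the edge-wise contraction bound \eqref{eq:edge-contraction} with $\rho = d_{\mathrm{Cay}}$ and then invoke the Path Coupling Theorem~\ref{thm:path-coupling} verbatim. Fix an edge $(\sigma, \sigma \circ \tau_{ij})$ and run the two chains under the shared-randomness coupling of Definition~\ref{def:shiftable-kernel}: both chains use the same index pair $(I,J)$ and the same direction $\eta \in \Omega_{IJ}$. Conditioning on whether $(I,J)=(i,j)$ splits the analysis into the aligned case, which occurs with probability $\tfrac{2}{n(n-1)}$, and the mismatch case, which occurs with the complementary probability $1 - \tfrac{2}{n(n-1)}$. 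The total expected post-step distance is then the weighted sum of the conditional expectations in the two cases, and the theorem's bound $\delta(\beta)$ is exactly what one gets by plugging in $0$ for the aligned contribution and $1 + (D_{\mathrm{cross}}-1)\tanh(\beta L_E)$ for the mismatch contribution.

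First I would handle the aligned case $(I,J)=(i,j)$. Here the orbit identity $\eta^m = \tau_{ij}$ from Definition~\ref{def:Omegai} gives $Y_t = X_{t+m}$ for all $t$, so the two full orbits (each of length $2m$) are the same set of permutations, merely reindexed by the shift $m \bmod 2m$. Since the index-selection weights $w(\sigma\eta^r)$ and $w(\sigma\circ\tau_{ij}\eta^r) = w(\sigma\eta^{r+m})$ are the same multiset of numbers in the same cyclic order, the categorical law on the $X$-orbit and the categorical law on the $Y$-orbit coincide up to the deterministic shift by $m$. Hence I can couple the two index draws so that the chain started at $\sigma$ picks index $R$ and the chain started at $\sigma\circ\tau_{ij}$ picks index $R - m \pmod{2m}$, which forces $U = X_R = Y_{R-m} = V$, i.e. $d_{\mathrm{Cay}}(U,V)=0$ almost surely on this event. (Here the orbit-equivariance of $q$, or the state-independence of the direction law within $\Omega_{IJ}$, is what guarantees the two chains draw from $\Omega_{ij}$ with the same law so the shared-$\eta$ coupling is legitimate.)

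Next I would handle the mismatch case $(I,J)\neq(i,j)$. As noted in the excerpt, for every $t$ one has $X_t^{-1}Y_t = \eta^{-t}\tau_{ij}\eta^t$, a single transposition, so $d_{\mathrm{Cay}}(X_t,Y_t)=1$ throughout the orbit. Let $p$ and $q$ be the categorical distributions on $W = \{0,\dots,2m-1\}$ with $p(r)\propto w(X_r)$ and $q(r)\propto w(Y_r)$. Using the Lipschitz hypothesis \eqref{eq:Lipschitz-again} with $d_{\mathrm{Cay}}(X_r,Y_r)=1$, we get $|E(X_r)-E(Y_r)|\le L_E$ for every $r$, so the weight ratios $w(Y_r)/w(X_r) = e^{-\beta(E(Y_r)-E(X_r))}$ all lie in $[e^{-\beta L_E}, e^{\beta L_E}]$; a short computation on normalized weights (bounding $\TV$ by the spread of the likelihood ratio) then gives $\TV(p,q)\le \tanh(\beta L_E)$. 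Take the maximal coupling of $p$ and $q$: with probability $1-\TV(p,q)$ the two index draws agree, yielding $U = X_R$ and $V = Y_R$ with $d_{\mathrm{Cay}}(U,V)=1$; with probability $\TV(p,q)\le \tanh(\beta L_E)$ they disagree, in which case we bound $d_{\mathrm{Cay}}(U,V) \le D_{\mathrm{cross}}$ by definition of the cross-orbit diameter. Thus the conditional expectation in the mismatch case is at most $1\cdot(1-\tanh(\beta L_E)) + D_{\mathrm{cross}}\cdot\tanh(\beta L_E) = 1 + (D_{\mathrm{cross}}-1)\tanh(\beta L_E)$.

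Combining, $\mathbb{E}[d_{\mathrm{Cay}}(U,V)] \le \tfrac{2}{n(n-1)}\cdot 0 + \bigl(1-\tfrac{2}{n(n-1)}\bigr)\bigl[1+(D_{\mathrm{cross}}-1)\tanh(\beta L_E)\bigr] = \delta(\beta)$, which is $d_{\mathrm{Cay}}(\sigma,\sigma\circ\tau_{ij})\cdot\delta(\beta)$ since the edge has unit length. When $\delta(\beta)<1$ we set $e^{-\alpha}=\delta(\beta)$, and Theorem~\ref{thm:path-coupling} with $\diam(S_n,d_{\mathrm{Cay}})=D_{\max}=n-1$ yields $\|K^t(\sigma,\cdot)-P_\beta\|_{\TV}\le D_{\max}\,\delta(\beta)^t$; solving $D_{\max}\,\delta(\beta)^t\le\varepsilon$ gives the stated mixing-time bound. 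I expect the main obstacle to be the bookkeeping in the two couplings — verifying that the shift-by-$m$ index coupling in the aligned case is a genuine coupling of the correct marginals (this is where $\eta^m=\tau_{ij}$ and equivariance of $q$ must be used carefully), and establishing the clean inequality $\TV(p,q)\le\tanh(\beta L_E)$ from the pointwise log-ratio bound rather than a cruder estimate. The reduction of the $O(n^2\log n)$ bound to the above is then routine: in the high-temperature regime where $\beta L_E$ is small enough that $(D_{\mathrm{cross}}-1)\tanh(\beta L_E) = O(1/n)$ (using $D_{\mathrm{cross}}\le n-1$), one has $-\log\delta(\beta) = \Theta(1/n^2)$, while $\log D_{\max}+\log(1/\varepsilon)=O(\log n)$, giving $t_{\mathrm{mix}}(\varepsilon)=O(n^2\log n)$.
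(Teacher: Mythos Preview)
Your proposal is correct and follows essentially the same approach as the paper: the same case split on $(I,J)=(i,j)$ versus $(I,J)\neq(i,j)$, the same shift-by-$m$ perfect coupling in the aligned case (the paper's Lemma~\ref{lem:aligned-perfect}), the same maximal coupling with the $\tanh(\beta L_E)$ total-variation bound in the mismatch case (the paper's Theorem~\ref{thm:mallows-TV} and Theorem~\ref{thm:mismatch-Ed}), and the same combination via Theorem~\ref{thm:path-coupling}. The paper packages the TV bound and the two case analyses into separate lemmas, but the mathematical content is identical to what you outline inline.
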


\begin{proof}
Fix an edge $(\sigma,\sigma \circ \tau_{ij})$. We construct a one-step coupling by sharing the random choices $((I,J),\eta)$ for both chains. There are two cases: the aligned case $(I,J)=(i,j)$, and the mismatch case $(I,J)\neq(i,j)$.  With probability $2/(n(n-1))$ we are in the aligned case $(I,J)=(i,j)$, and by Lemma~\ref{lem:aligned-perfect} we can couple the updates so that $d_{\mathrm{Cay}}(U,V)=0$. With the complementary probability $1-2/(n(n-1))$ we are in the mismatch case $(I,J)\neq(i,j)$, and Theorem~\ref{thm:mismatch-Ed} yields
\[
  \EE\bigl[d_{\mathrm{Cay}}(U,V)\,\big|\,(I,J)\neq(i,j)\bigr]
  \;\le\;
  1 + (D_{\mathrm{cross}}-1)\,\tanh(\beta L_E).
\]
Taking expectations over the choice of $(I,J)$ gives the bound
$\EE[d_{\mathrm{Cay}}(U,V)] \le \delta(\beta)$ as stated. Since
$d_{\mathrm{Cay}}(\sigma,\sigma \circ \tau_{ij})=1$, this is exactly the
edge-wise contraction condition in the path Theorem~\ref{thm:path-coupling} with factor $e^{-\alpha}=\delta(\beta)$. Applying Theorem~\ref{thm:path-coupling} with $\rho=d_{\mathrm{Cay}}$ and
$\diam(\mathcal{X})=D_{\max}$ yields the TV decay and mixing-time bound.
\end{proof}

\begin{remark}[Scale of contraction and temperature]
In the setting of Theorem~\ref{thm:main-mixing}, the one-step contraction factor is
\[
  \delta(\beta)
  :=
  \Bigl(1-\frac{2}{n(n-1)}\Bigr)
  \Bigl[\,1 + (D_{\mathrm{cross}}-1)\,\tanh(\beta L_E)\Bigr].
\]
Using $\tanh x \le x$ and $D_{\mathrm{cross}} \le n-1$, we obtain
\[
  1-\delta(\beta)
  \;\ge\;
  \frac{2}{n(n-1)} - (n-2)\,\beta L_E.
\]
Thus there exist constants $c_1,c_2>0$ such that whenever \(\beta \;\le\; \frac{c_1}{n^3\,L_E}\) (Table ~\ref{tab:beta-scales}), we have \( 1-\delta(\beta) \;\ge\; \frac{c_2}{n^2}.\) Plugging this into the mixing-time bound in Theorem~\ref{thm:main-mixing} yields in high-temperature regime $\beta = O\!\bigl(1/(n^3 L_E)\bigr)$.
\[
  t_{\mathrm{mix}}(\varepsilon)
  \;\le\;
  \frac{\log D_{\max} + \log(1/\varepsilon)}{1-\delta(\beta)}
  \;=\;
  O\!\bigl(n^2\bigl(\log n + \log(1/\varepsilon)\bigr)\bigr).
\]
\end{remark}

\begin{table}[H]
\centering
\begin{tabular}{@{}llll@{}}
\toprule
Energy $E(\pi)$ & $E_{\max}$ & Local jump $L_E$ (Cayley step) & $\beta$ scale \\
\midrule
Kendall--$\tau$
& $\displaystyle \binom{n}{2}$
& $\displaystyle 2(n-1)-1$
& $\Theta(n^{-4})$ \\[4pt]

Footrule ($L^1$)
& $\displaystyle \lfloor n^{2}/2\rfloor$
& $\displaystyle 2(n-1)$
& $\Theta(n^{-4})$ \\[4pt]

Spearman--$\rho$ ($L^2$--squared)
& $\displaystyle \frac{n^{3}-n}{3}$
& $\displaystyle 2(n-1)^{2}$
& $\Theta(n^{-5})$ \\[4pt]

Hamming ($\#\{i:\pi(i)\neq i\}$)
& $\displaystyle n$
& $\displaystyle 2$
& $\Theta(n^{-3})$ \\[4pt]

Ulam ($n-\mathrm{LIS}(\pi)$)
& $\displaystyle n-1$
& $\displaystyle 2$
& $\Theta(n^{-3})$ \\[4pt]

Cayley
& $\displaystyle n-1$
& $\displaystyle 1$
& $\Theta(n^{-3})$ \\
\bottomrule
\end{tabular}
\caption{$\beta$ scale for contraction}
\label{tab:beta-scales}
\end{table}

\begin{remark}[Implication for Barker transpositions]
The Barker chain with random transposition proposals is obtained as the degenerate case of the shiftable kernel in which each
$\Omega_{ij}$ is supported on the bare transposition $\tau_{ij}$ (so
$\ord(\tau_{ij})=2$ and the orbit has length $2$). Thus the path–coupling bound of Theorem~\ref{thm:main-mixing} applies directly to this case. We have
\[
  t_{\mathrm{mix}}^{\mathrm{Barker}}(\varepsilon)
  \;=\;
  \calO\!\bigl(n^2\bigl(\log n + \log(1/\varepsilon)\bigr)\bigr),
\]
so the Barker transposition dynamics mixes on the same $n^2\log n$ time scale as the shiftable direction NURS kernel. Moreover, in this specialization the quantity $D_{\mathrm{cross}}$ in Theorem~\ref{thm:mismatch-Ed} is at most $2$, so the contraction condition holds already for $\beta$ of order $1/(n^2 L_E)$ rather than $1/(n^3 L_E)$, i.e.\ on an $n$-times larger high-temperature window than in the general shiftable direction case.
\end{remark}

\subsection{Weight control along paired orbits}
\label{subsec:weight-control}

Let $E:S_n\to\mathbb{R}$ be an energy, and for fixed $\beta>0$ define the unnormalised
Mallows weights
\[
  w(\sigma) \;=\; \exp\bigl(-\beta\,E(\sigma)\bigr),\qquad \sigma\in S_n.
\]
We will control the total–variation distance between two categorical laws obtained by
normalising the weights along two ``paired'' orbit segments.

Let $\calI$ be a finite index set and consider two indexed families
\[
  \{\sigma_t\}_{t\in\calI},\quad \{\sigma'_t\}_{t\in\calI}\subset S_n.
\]
Write
\[
  w_t := w(\sigma_t),
  \qquad
  w'_t := w(\sigma'_t),
  \qquad
  Z := \sum_{s\in\calI} w_s,
  \qquad
  Z' := \sum_{s\in\calI} w'_s,
\]
and define the corresponding categorical distributions on $\calI$ by
\[
  p(t) := \frac{w_t}{Z},
  \qquad
  q(t) := \frac{w'_t}{Z'},
  \qquad t\in\calI.
\]

We assume a global Lipschitz condition for the energy in the Cayley metric:
\begin{equation}
  \label{eq:Lipschitz}
  |E(\pi)-E(\rho)|
  \;\le\;
  L_E\, d_{\mathrm{Cay}}(\pi,\rho)
  \quad\text{for all }\pi,\rho\in S_n,
\end{equation}
and a uniform proximity assumption for the paired orbit points:
\begin{equation}
  \label{eq:k-proximity}
  d_{\mathrm{Cay}}(\sigma_t,\sigma'_t) \;\le\; k
  \quad\text{for all } t\in\calI,
\end{equation}
for some fixed integer $k\ge1$.
In the coupling below we will apply this with $k=1$, but the statement is
no harder in the slightly more general form. We first record a small, self-contained lemma that will be used later.

\medskip

\begin{lemma}[TV bound under bounded log-likelihood ratio]
\label{lem:TV-llr}
Let $p$ and $q$ be probability mass functions on a finite set $\calI$ such that
\[
  e^{-\varepsilon}
  \;\le\;
  \frac{q(t)}{p(t)}
  \;\le\;
  e^{\varepsilon}
  \qquad
  \text{for all }t\in\calI
\]
for some $\varepsilon\ge0$.
Then
\[
  \TV(p,q) \;\le\; \tanh\!\bigl(\varepsilon/2\bigr).
\]
\end{lemma}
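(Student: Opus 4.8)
The plan is to reduce the total-variation distance to a clean worst-case optimization over the likelihood ratio. First I would recall the standard identity $\TV(p,q)=\sum_{t\,:\,p(t)>q(t)}\bigl(p(t)-q(t)\bigr)$, equivalently $\TV(p,q)=\tfrac12\sum_t|p(t)-q(t)|$, and write each difference as $p(t)-q(t)=p(t)\bigl(1-r(t)\bigr)$ where $r(t):=q(t)/p(t)\in[e^{-\varepsilon},e^{\varepsilon}]$. The subtlety is that the $r(t)$ cannot be chosen completely freely: since both $p$ and $q$ are probability mass functions, $\sum_t p(t)\,r(t)=\sum_t q(t)=1=\sum_t p(t)$, i.e. $\sum_t p(t)\bigl(r(t)-1\bigr)=0$. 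This single linear constraint, together with the pointwise bound $r(t)\in[e^{-\varepsilon},e^{\varepsilon}]$, is exactly what forces the $\tanh(\varepsilon/2)$ bound rather than the weaker $\tfrac{e^\varepsilon-e^{-\varepsilon}}{e^\varepsilon+e^{-\varepsilon}}$-type estimate one might naively write.

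Concretely, I would split the index set into $S_-:=\{t:r(t)<1\}$ and $S_+:=\{t:r(t)\ge1\}$, and set $A:=\sum_{t\in S_-}p(t)$, and note $\TV(p,q)=\sum_{t\in S_-}p(t)\bigl(1-r(t)\bigr)$. On $S_-$ we have $1-r(t)\le 1-e^{-\varepsilon}$, so $\TV(p,q)\le A\,(1-e^{-\varepsilon})$. Separately, the balance constraint gives $\sum_{t\in S_-}p(t)\bigl(1-r(t)\bigr)=\sum_{t\in S_+}p(t)\bigl(r(t)-1\bigr)\le (1-A)\,(e^{\varepsilon}-1)$. Thus $\TV(p,q)\le\min\bigl\{A(1-e^{-\varepsilon}),\,(1-A)(e^{\varepsilon}-1)\bigr\}$, and maximizing this over $A\in[0,1]$ — the two linear functions of $A$ cross at the optimum — gives $\TV(p,q)\le\frac{(1-e^{-\varepsilon})(e^{\varepsilon}-1)}{(1-e^{-\varepsilon})+(e^{\varepsilon}-1)}$. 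A short algebraic simplification of the right-hand side, multiplying through by $e^{\varepsilon/2}$ in numerator and denominator, shows it equals $\frac{e^{\varepsilon/2}-e^{-\varepsilon/2}}{e^{\varepsilon/2}+e^{-\varepsilon/2}}=\tanh(\varepsilon/2)$, which is the claimed inequality. (If $\varepsilon=0$ then $r\equiv1$, $p=q$, and both sides are $0$.)

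The only real content here is recognizing that the probability-normalization constraint must be used; without it one gets a strictly weaker bound. So the main ``obstacle'' is conceptual rather than technical: one must resist bounding $|p(t)-q(t)|$ termwise and instead exploit the global cancellation. Everything after that is a two-line convex/linear optimization in the scalar $A$ and an elementary simplification of $\frac{(1-e^{-\varepsilon})(e^{\varepsilon}-1)}{(1-e^{-\varepsilon})+(e^{\varepsilon}-1)}$ to $\tanh(\varepsilon/2)$; I would present the latter simplification in a single displayed line. An alternative, essentially equivalent route is to write $\TV(p,q)=1-\sum_t\min\{p(t),q(t)\}$ and lower-bound the overlap using $\min\{p(t),q(t)\}\ge \frac{2\sqrt{p(t)q(t)}}{\cdots}$-type estimates, but the $S_-/S_+$ argument above is the most direct and I would go with it.
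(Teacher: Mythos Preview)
Your proof is correct but proceeds differently from the paper. The paper works with the overlap identity $\TV(p,q)=1-\sum_t\min\{p(t),q(t)\}$ and establishes the termwise lower bound $\min\{p(t),q(t)\}\ge (p(t)+q(t))/(1+e^{\varepsilon})$ directly from the ratio hypothesis; summing over $t$ and using $\sum_t p(t)=\sum_t q(t)=1$ immediately gives $\TV(p,q)\le 1-\tfrac{2}{1+e^{\varepsilon}}=\tanh(\varepsilon/2)$, with no optimization step. In contrast, you use the positive-part formula, invoke the balance constraint $\sum_t p(t)(r(t)-1)=0$ explicitly, and then maximize $\min\{A(1-e^{-\varepsilon}),(1-A)(e^{\varepsilon}-1)\}$ over the split mass $A$. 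The paper's route is a bit shorter and sidesteps the optimization; your route has the mild advantage of exhibiting the extremal configuration (the crossing value of $A$) and thereby making the tightness of the bound transparent. Amusingly, the ``alternative, essentially equivalent route'' you mention at the end and set aside is precisely what the paper does.
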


\begin{proof}
Fix $t\in\calI$.
If $q(t)\le p(t)$, then $q(t)\ge e^{-\varepsilon}p(t)$ and hence
\[
  p(t)+q(t)
  \;\le\;
  p(t) + e^{\varepsilon}q(t)
  \;=\;
  (1+e^{\varepsilon})\,q(t),
\]
so
\(
  \min\{p(t),q(t)\}=q(t)\ge (p(t)+q(t))/(1+e^{\varepsilon}).
\)
If instead $p(t)\le q(t)$ we obtain the same bound by symmetry.
Thus, for all $t$,
\[
  \min\{p(t),q(t)\}
  \;\ge\;
  \frac{p(t)+q(t)}{1+e^{\varepsilon}}.
\]
Summing over $t$ and using $\sum_t p(t)=\sum_t q(t)=1$ gives
\[
  \sum_{t}\min\{p(t),q(t)\}
  \;\ge\;
  \frac{2}{1+e^{\varepsilon}}.
\]
Since
\(
  \TV(p,q)
  = 1-\sum_t\min\{p(t),q(t)\},
\)
we conclude that
\[
  \TV(p,q)
  \;\le\;
  1-\frac{2}{1+e^{\varepsilon}}
  \;=\;
  \frac{e^{\varepsilon}-1}{e^{\varepsilon}+1}
  \;=\;
  \tanh\!\bigl(\varepsilon/2\bigr).
\]
\end{proof}

We now specialise this to the Mallows weights along two paired orbit segments.

\medskip

\begin{theorem}[Orbit TV bound for Mallows weights]
\label{thm:mallows-TV}
Assume \eqref{eq:Lipschitz} and \eqref{eq:k-proximity}.
Then the categorical laws $p$ and $q$ defined above satisfy
\[
  \TV(p,q)
  \;\le\;
  \tanh\!\bigl(\beta L_E k\bigr).
\]
In particular, when $k=1$ we have $\TV(p,q)\le\tanh(\beta L_E)$.
\end{theorem}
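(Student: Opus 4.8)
The plan is to route everything through Lemma~\ref{lem:TV-llr}: I will show that the likelihood ratio $q(t)/p(t)$ is pinned inside the interval $[e^{-2\beta L_E k},\, e^{2\beta L_E k}]$ for every $t\in\calI$, and then apply the lemma with $\varepsilon = 2\beta L_E k$. Since Lemma~\ref{lem:TV-llr} contributes the factor $\varepsilon/2$, this produces exactly $\tanh(\beta L_E k)$, and the case $k=1$ is then immediate.

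First I would record the pointwise weight-ratio bound. Combining the Lipschitz hypothesis \eqref{eq:Lipschitz} with the proximity hypothesis \eqref{eq:k-proximity} gives $|E(\sigma_t)-E(\sigma'_t)| \le L_E\, d_{\mathrm{Cay}}(\sigma_t,\sigma'_t) \le L_E k$ for every $t\in\calI$; exponentiating and recalling $w(\sigma)=\exp(-\beta E(\sigma))$,
\[
  e^{-\beta L_E k}
  \;\le\;
  \frac{w'_t}{w_t}
  \;=\;
  \exp\!\bigl(-\beta\bigl(E(\sigma'_t)-E(\sigma_t)\bigr)\bigr)
  \;\le\;
  e^{\beta L_E k},
  \qquad t\in\calI.
\]

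Second I would propagate this to the partition functions. Summing the inequalities $e^{-\beta L_E k}\,w_s \le w'_s \le e^{\beta L_E k}\,w_s$ over $s\in\calI$ yields $e^{-\beta L_E k}\,Z \le Z' \le e^{\beta L_E k}\,Z$, hence $Z/Z' \in [e^{-\beta L_E k},\, e^{\beta L_E k}]$. Writing $q(t)/p(t) = (w'_t/w_t)\cdot(Z/Z')$ as a product of two bracketed factors, each confined to $[e^{-\beta L_E k},\, e^{\beta L_E k}]$, gives $e^{-2\beta L_E k} \le q(t)/p(t) \le e^{2\beta L_E k}$ for all $t$. Applying Lemma~\ref{lem:TV-llr} with $\varepsilon = 2\beta L_E k$ then yields $\TV(p,q) \le \tanh(\beta L_E k)$, and setting $k=1$ gives the final assertion.

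There is essentially no genuine obstacle here; the only point requiring care is the bookkeeping of the factor of two — namely that the normalization constants contribute a \emph{second} copy of $e^{\pm\beta L_E k}$ to the likelihood ratio beyond the pointwise weight ratio — which is precisely absorbed by the $\varepsilon/2$ appearing in Lemma~\ref{lem:TV-llr}. If one preferred a self-contained argument, one could instead bound $\TV(p,q)=\sum_t (p(t)-q(t))_+$ directly using the same two-sided weight estimates, but passing through Lemma~\ref{lem:TV-llr} keeps the computation short and isolates the role of the bounded log-likelihood ratio.
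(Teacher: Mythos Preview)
Your proposal is correct and follows essentially the same route as the paper's own proof: bound the pointwise weight ratio $w'_t/w_t$ by $e^{\pm\beta L_E k}$ via Lipschitz plus proximity, sum to get the same two-sided bound on $Z'/Z$, combine to trap $q(t)/p(t)$ in $[e^{-2\beta L_E k},e^{2\beta L_E k}]$, and then invoke Lemma~\ref{lem:TV-llr} with $\varepsilon=2\beta L_E k$. The only cosmetic difference is that the paper phrases the intermediate bounds additively in logarithms whereas you keep them multiplicative; the content and the final constant are identical.
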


\begin{proof}
From \eqref{eq:Lipschitz} and \eqref{eq:k-proximity},
\[
  |E(\sigma'_t)-E(\sigma_t)|
  \;\le\;
  L_E\,d_{\mathrm{Cay}}(\sigma_t,\sigma'_t)
  \;\le\;
  L_E k,
\]
so
\[
  \bigl|\log w'_t - \log w_t\bigr|
  = \beta\,\bigl|E(\sigma_t)-E(\sigma'_t)\bigr|
  \;\le\;
  \beta L_E k
  \qquad\text{for all }t\in\calI.
\]
Summing over $t$ yields
\[
  e^{-\beta L_E k} Z
  \;\le\;
  Z'
  \;\le\;
  e^{\beta L_E k} Z,
  \qquad\text{so}\qquad
  \bigl|\log Z' - \log Z\bigr|
  \;\le\;
  \beta L_E k.
\]
For each $t$ we therefore have
\[
  \frac{q(t)}{p(t)}
  = \frac{w'_t/Z'}{w_t/Z}
  = \frac{w'_t}{w_t}\,\frac{Z}{Z'},
\]
and the two bounds above imply
\[
  \bigl|\log(q(t)/p(t))\bigr|
  \;\le\;
  \bigl|\log w'_t - \log w_t\bigr|
  + \bigl|\log Z - \log Z'\bigr|
  \;\le\;
  2\beta L_E k.
\]
Thus $e^{-\varepsilon}\le q(t)/p(t)\le e^{\varepsilon}$ for all $t$, with
$\varepsilon := 2\beta L_E k$.
Applying Lemma~\ref{lem:TV-llr} gives
\[
  \TV(p,q)
  \;\le\;
  \tanh\!\bigl(\varepsilon/2\bigr)
  \;=\;
  \tanh\!\bigl(\beta L_E k\bigr).
\]
\end{proof}

\subsection{Alignment Case}
\label{subsec:alignment}
We first consider the \emph{aligned case}, in which the sampled index pair
$(I,J)$ coincides with the edge $(i,j)$ under consideration. In this setting,
the two NURS orbits coincide up to a deterministic index shift, allowing for a
perfect one-step coupling.

\medskip

\begin{lemma}[Perfect coupling in the aligned case]\label{lem:aligned-perfect}
Fix $1\le i<j\le n$ and let $\eta\in\Omega_{ij}$ with $\ord(\eta)=2m$, so that
$\eta^{m}=\tau_{ij}$. For $\sigma\in S_n$ set $\sigma' := \sigma \circ \tau_{ij}$ and
recall the orbit notation
\[
  X_t := \sigma\eta^{t},\qquad
  Y_t := \sigma'\eta^{t},\qquad t\in\mathbb{Z}.
\]
Then for all $t\in\mathbb{Z}$,
\[
  Y_t = X_{t+m},
\]
so the two length-$2m$ orbits $\{X_t\}_{t=0}^{2m-1}$ and
$\{Y_t\}_{t=0}^{2m-1}$ coincide up to the fixed index shift $m$. In
particular, for any window $W\subset\{0,\dots,2m-1\}$ the categorical laws
of $X_T$ and $Y_{T'}$ with weights proportional to $w(X_t)$ and $w(Y_t)$
can be coupled with total variation distance~$0$ by taking $T$ and
$T' = T+m \pmod{2m}$.
\end{lemma}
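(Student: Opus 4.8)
The plan is to reduce the entire statement to the single algebraic identity $\eta^{m}=\tau_{ij}$ and then read off the orbit shift, after which the coupling of the resampling step is immediate. First I would recall from Definition~\ref{def:Omegai} that every $\eta\in\Omega_{ij}$ factors as $\eta=\tau_{ij}h$ with $h\in S_{[n]\setminus\{i,j\}}$ a product of odd-length cycles. Since $h$ fixes $i$ and $j$, it permutes $[n]\setminus\{i,j\}$ and therefore commutes with $\tau_{ij}$; with $m:=\ord(h)$ odd one gets $\eta^{m}=\tau_{ij}^{m}h^{m}=\tau_{ij}$ (using that $m$ is odd, so $\tau_{ij}^{m}=\tau_{ij}$, and $h^{m}=\mathrm{id}$), and $\ord(\eta)=\operatorname{lcm}(2,m)=2m$. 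This is exactly the structural fact already recorded just after Definition~\ref{def:Omegai}, so I would cite it rather than re-derive it.

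Next I would substitute directly. For every $t\in\mathbb{Z}$,
\[
  Y_t=\sigma'\circ\eta^{t}=\sigma\circ\tau_{ij}\circ\eta^{t}=\sigma\circ\eta^{m}\circ\eta^{t}=\sigma\circ\eta^{t+m}=X_{t+m}.
\]
Because $\ord(\eta)=2m$, indices are read modulo $2m$, so the ordered $2m$-tuples $(X_0,\dots,X_{2m-1})$ and $(Y_0,\dots,Y_{2m-1})$ are cyclic rotations of one another by $m$ positions; in particular the underlying sets coincide.

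Then I would construct the coupling of the index-selection step. In the idealized kernel the selection window is the full orbit $W=\{0,\dots,2m-1\}$ for both chains (the direction $\eta$, hence $\ord(\eta)=2m$, is shared), so the $X$-chain draws $R_X\in W$ with $\mathbb{P}(R_X=r)\propto w(X_r)$ and the $Y$-chain draws $R_Y\in W$ with $\mathbb{P}(R_Y=r)\propto w(Y_r)$. From $Y_r=X_{r+m}$ we get $w(Y_r)=w(X_{r+m})$, and the normalizers agree since $\sum_r w(Y_r)=\sum_r w(X_{r+m})=\sum_r w(X_r)$; hence the $Y$-law is precisely the $m$-shift of the $X$-law. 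I would therefore couple by setting $R_Y:=R_X+m\pmod{2m}$, which has the correct marginal, and note that the proposed next states satisfy $V=Y_{R_Y}=X_{R_Y+m}=X_{R_X+2m}=X_{R_X}=U$. Thus $U=V$ almost surely, $d_{\mathrm{Cay}}(U,V)=0$, and this is a legitimate coupling realizing total variation distance $0$. For a general window $W$ one takes the $Y$-chain's window to be $W+m\pmod{2m}$ and argues identically; only the full-orbit case is needed in Theorem~\ref{thm:main-mixing}.

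There is essentially no hard step: the whole lemma is powered by $\eta^{m}=\tau_{ij}$, which is already in hand, and everything else is index bookkeeping modulo $2m$. The one point that deserves a line of care is matching the two categorical laws under the shift — one must check that $t\mapsto t+m$ is a bijection of $W$ transporting the weights $w(X_{\cdot})$ to $w(Y_{\cdot})$ — but this is immediate from $Y_t=X_{t+m}$ together with the equality of normalizers.
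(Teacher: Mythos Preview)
Your proposal is correct and follows essentially the same approach as the paper's proof: both substitute $\eta^{m}=\tau_{ij}$ directly to obtain $Y_t=X_{t+m}$, then couple the categorical index draws via the deterministic shift $T'=T+m\pmod{2m}$. Your version is slightly more explicit (re-deriving $\eta^{m}=\tau_{ij}$, checking that the normalizers agree, and verifying $U=V$ almost surely), but the argument is identical in substance.
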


\begin{proof}
Using $\eta^{m}=\tau_{ij}$ and the definition of $\sigma'$,
\[
  Y_t = \sigma'\eta^{t}
      = \sigma \circ \tau_{ij}\eta^{t}
      = \sigma\eta^{m}\eta^{t}
      = \sigma\eta^{t+m}
      = X_{t+m},
\]
for all $t\in\mathbb{Z}$. Thus $\{Y_t\}$ is just the cyclic shift of
$\{X_t\}$ by $m$. Given any index window $W\subset\{0,\dots,2m-1\}$, set
$T$ with law proportional to $w(X_t)$ on $W$, and define $T' = T+m \pmod{2m}$.
Then $X_T$ and $Y_{T'}$ have the same weighted law, so the corresponding categorical distributions are coupled perfectly, with total variation distance~$0$.
\end{proof}

\begin{figure}[H]
\centering
\begin{tikzpicture}[xscale=0.85, node distance=1.1cm]
  \tikzset{
    dot/.style={circle, fill=black, inner sep=2.5pt},
    dotgray/.style={circle, fill=gray!60, inner sep=2.5pt}
  }

  \node[dot,label=above:{$\scriptstyle\sigma$}] (sigmaTop) {};
  \node[dot,label=above:{$\scriptstyle\sigma \circ \eta^{1}$}] (t1) [right=of sigmaTop] {};
  \node[dot,label=above:{$\scriptstyle\sigma \circ \eta^{2}$}] (t2) [right=of t1] {};
  \node[dot,label=above:{$\scriptstyle\sigma \circ \eta^{3}$}] (t3) [right=of t2] {};
  \node[dot,label=above:{$\scriptstyle\sigma \circ \eta^{4}$}] (t4) [right=of t3] {};
  \node[dot,label=above:{$\scriptstyle\sigma \circ \eta^{5}$}] (t5) [right=of t4] {};

  \node[dot,label=above:{$\scriptstyle\sigma$}]                (t6) [right=of t5] {};
  \node[dot,label=above:{$\scriptstyle\sigma \circ \eta^{1}$}] (t7) [right=of t6] {};
  \node[dot,label=above:{$\scriptstyle\sigma \circ \eta^{2}$}] (t8) [right=of t7] {};

  \draw[thick] ([xshift=-4mm]sigmaTop.center) -- ([xshift=4mm]t5.center);
  
  \node[dotgray,label=above:{$\scriptstyle\sigma'$}]
    (sigmaPrime) at ($(t3)+(0,-18mm)$) {};
  \node[dotgray,label=above:{$\scriptstyle\sigma' \circ \eta^{1}$}] (b1) [right=of sigmaPrime] {};
  \node[dotgray,label=above:{$\scriptstyle\sigma' \circ \eta^{2}$}] (b2) [right=of b1] {};
  \node[dotgray,label=above:{$\scriptstyle\sigma' \circ \eta^{3}$}] (b3) [right=of b2] {};
  \node[dotgray,label=above:{$\scriptstyle\sigma' \circ \eta^{4}$}] (b4) [right=of b3] {};
  \node[dotgray,label=above:{$\scriptstyle\sigma' \circ \eta^{5}$}] (b5) [right=of b4] {};
  \draw[thick] ([xshift=-4mm]sigmaPrime.center) -- ([xshift=4mm]b5.center);
  \draw[thick,shorten >=12pt,shorten <=12pt]
    (t3) -- node[right=2pt,midway] {$\scriptstyle\eta^{3} = (i,j)$} (sigmaPrime);
  \foreach \top/\bot/\h in {
      t3/sigmaPrime/4mm,
      t4/b1/7mm,
      t5/b2/3mm,
      t6/b3/9mm,
      t7/b4/5mm,
      t8/b5/6mm
  }{
    \draw[fill=black]
      ($(\top)+(-1.5mm,7mm)$) rectangle ($(\top)+(1.5mm,7mm+\h)$);
    \draw[fill=gray!60]
      ($(\bot)+(-1.5mm,-4mm)$) rectangle ($(\bot)+(1.5mm,-4mm-\h)$);
  }

\end{tikzpicture}
\caption{
Aligned case \((I,J)=(i,j)\) with \(\ord(\eta)=2m=6\), so \(m=3\). The two rows show the orbits $\{\sigma \eta^{t}\}_{t=0}^{5}$ and $\{\sigma' \eta^{t}\}_{t=0}^{5}$. Since \(\eta^{3} = \tau_{ij}\), we have \(\sigma' = \sigma \tau_{ij} = \sigma \eta^{3}\), and thus for all $t$ $\sigma' \eta^{t} = \sigma \eta^{t+3}$. In particular, the lower row is just a cyclic shift of the upper row by three positions, so the two orbits induce the same categorical law on indices.
}
\label{fig:orbit-shift}
\end{figure}
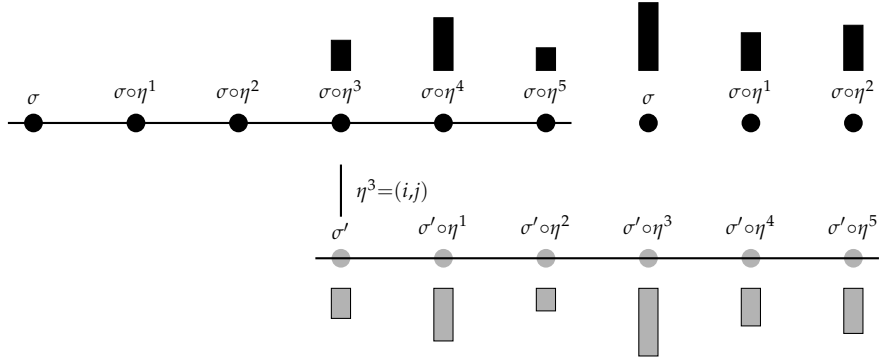

\subsection{Mismatch Cases}
\label{subsec:mismatch}

We now analyze the \emph{mismatch case}, in which the sampled transposition index
$(I,J)$ does not coincide with the edge $(i,j)$ under consideration. In contrast
to the aligned case, the two NURS orbits no longer coincide up to a fixed index
shift. Nevertheless, a key structural property remains: corresponding points
along the two orbits stay at Cayley distance one. This uniform proximity allows
us to control the expected distance after one step via a coupling of the window
index distributions.

\medskip

\begin{lemma}[Unit-distance in the mismatch case]\label{lem:mismatch-dist1}
Fix $1\le i<j\le n$ and let $\tau_{ij}=(i\,j)$. For $\sigma\in S_n$ set
$\sigma' := \sigma \circ \tau_{ij}$. Let $(I,J)\neq(i,j)$ and take any
$\eta\in\Omega_{IJ}$. Then for every $t\in\mathbb{Z}$,
\[
  d_{\mathrm{Cay}}(X_t,Y_t) = 1.
\]
\end{lemma}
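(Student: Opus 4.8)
The statement is essentially immediate from the conjugation identity already recorded in the text just before the lemma, so the proof will be short; the only points requiring care are (i) invoking left-invariance of the Cayley distance and (ii) checking that a conjugate of a transposition is again a transposition distinct from the identity, which pins the distance down to exactly one (neither $0$ nor $\ge 2$).

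First I would recall the relevant facts about the Cayley metric on $S_n$ with generating set $\mathcal{T}$: it is left-invariant, so $d_{\mathrm{Cay}}(\pi,\rho)=d_{\mathrm{Cay}}(\mathrm{id},\pi^{-1}\rho)$ for all $\pi,\rho\in S_n$, and $d_{\mathrm{Cay}}(\mathrm{id},g)$ equals the minimal number of transpositions whose product is $g$ (equivalently, $n$ minus the number of cycles of $g$). In particular $d_{\mathrm{Cay}}(\mathrm{id},g)=0$ iff $g=\mathrm{id}$, and $d_{\mathrm{Cay}}(\mathrm{id},g)=1$ iff $g$ is a transposition. Next I would carry out the one-line computation: since $\sigma'=\sigma\circ\tau_{ij}$ and $X_t=\sigma\eta^{t}$, $Y_t=\sigma'\eta^{t}$, we have
\[
  X_t^{-1}Y_t = \eta^{-t}\sigma^{-1}\sigma\,\tau_{ij}\,\eta^{t} = \eta^{-t}\,\tau_{ij}\,\eta^{t}.
\]
Then I would observe that conjugation by $\eta^{-t}$ carries the transposition $\tau_{ij}=(i\,j)$ to $\bigl(\eta^{-t}(i)\ \eta^{-t}(j)\bigr)$; because $\eta^{-t}$ is a bijection and $i\neq j$, the two entries are distinct, so $X_t^{-1}Y_t$ is a genuine transposition and in particular is not the identity. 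Combining this with the characterization above yields $d_{\mathrm{Cay}}(X_t,Y_t)=d_{\mathrm{Cay}}(\mathrm{id},X_t^{-1}Y_t)=1$ for every $t\in\mathbb{Z}$.

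I expect no real obstacle here: the ``hard part,'' such as it is, is simply being careful that the conjugate transposition is nontrivial, so the distance is exactly $1$ rather than $0$. I would also remark that the hypothesis $(I,J)\neq(i,j)$ is not actually used in this argument — the unit-distance relation $d_{\mathrm{Cay}}(X_t,Y_t)=1$ holds for every $\eta\in\Omega$ and every $t$ — but it is the regime in which the lemma is applied, since in the aligned case one instead exploits the stronger shift relation $Y_t=X_{t+m}$ of Lemma~\ref{lem:aligned-perfect}.
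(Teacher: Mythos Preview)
Your proof is correct and follows essentially the same route as the paper: compute $X_t^{-1}Y_t=\eta^{-t}\tau_{ij}\eta^{t}$, observe that this is a conjugate of a transposition and hence itself a transposition, and conclude $d_{\mathrm{Cay}}(X_t,Y_t)=1$. Your write-up is slightly more explicit than the paper's in invoking left-invariance of $d_{\mathrm{Cay}}$ and in noting that the conjugate transposition is genuinely nontrivial, and your observation that the hypothesis $(I,J)\neq(i,j)$ is not needed for the distance identity itself is correct.
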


\begin{proof}
For each $t$,
\[
  X_t^{-1}Y_t
  = (\sigma\eta^{t})^{-1}\,(\sigma\circ\tau_{ij}\,\eta^{t})
  = \eta^{-t}\tau_{ij}\eta^{t},
\]
which is a transposition (a conjugate of $\tau_{ij}$).
Since $X_t^{-1}Y_t$ is a transposition, $X_t$ and $Y_t$ differ by exactly one
transposition, and therefore $d_{\mathrm{Cay}}(X_t,Y_t)=1$.
\end{proof}

The preceding lemma shows that, although the two orbits may drift apart in index,
they remain uniformly close in state space. We now exploit this fact to bound the
expected Cayley distance after one NURS update in the mismatch case.

\medskip

\begin{theorem}[Expected Cayley distance in the mismatch case]\label{thm:mismatch-Ed}
Consider the shiftable direction kernel $K$ with window length $2m$ and direction law $q$ supported on $\Omega$. Fix $1\le i<j\le n$ and set $\sigma' := \sigma \circ \tau_{ij}$. In the mismatch case $(I,J)\neq(i,j)$, let $U\sim K(\sigma,\cdot)$ and $V\sim K(\sigma',\cdot)$ be coupled via the window-index construction:
\[
  U = X_T = \sigma\eta^{T}, \qquad
  V = Y_{T'} = \sigma'\eta^{T'},
\]
where $T,T'$ are coupled indices on the window $W=\{0,\dots,2m-1\}$. Then there exists a coupling of $(T,T')$ such that
\[
  \EE\bigl[d_{\mathrm{Cay}}(U,V)\bigr]
  \;\le\;
  1 + (D_{\mathrm{cross}}-1)\,\tanh(\beta L_E).
\]

\end{theorem}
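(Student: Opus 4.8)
The plan is to set up a two-stage coupling of the window indices $(T,T')$: first decide whether the two categorical laws on $W=\{0,\dots,2m-1\}$ can be matched, and then bound the Cayley distance conditionally on each outcome. Write $p(t)\propto w(X_t)$ and $q(t')\propto w(Y_{t'})$ for the two window distributions. By Lemma~\ref{lem:mismatch-dist1} we have $d_{\mathrm{Cay}}(X_t,Y_t)=1$ for every $t$, so indexing the two orbits \emph{in lockstep} keeps paired points at Cayley distance one. This is exactly the $k=1$ proximity hypothesis \eqref{eq:k-proximity} needed to apply Theorem~\ref{thm:mallows-TV}, which gives $\TV(p,q)\le\tanh(\beta L_E)$ (here $p$ and $q$ are the two window laws viewed on the common index set $W$ via the lockstep identification).

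Next I would invoke the maximal coupling of $p$ and $q$: there is a coupling of $(T,T')$ such that $\mathbb{P}(T\neq T')=\TV(p,q)\le\tanh(\beta L_E)$. On the event $\{T=T'\}$, the two output states are $U=X_T$ and $V=Y_T$, so $d_{\mathrm{Cay}}(U,V)=1$ by Lemma~\ref{lem:mismatch-dist1}. On the complementary event $\{T\neq T'\}$, the outputs are $U=X_T=\sigma\eta^{T}$ and $V=Y_{T'}=\sigma'\eta^{T'}=\sigma\circ\tau_{ij}\,\eta^{T'}$; this is a pair of the form $(\sigma\eta^{s},\sigma\circ\tau_{ij}\eta^{t})$, so by the definition of $D_{\mathrm{cross}}$ we have $d_{\mathrm{Cay}}(U,V)\le D_{\mathrm{cross}}$. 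Taking expectations,
\[
\EE\bigl[d_{\mathrm{Cay}}(U,V)\bigr]
\;\le\;
1\cdot\mathbb{P}(T=T') + D_{\mathrm{cross}}\cdot\mathbb{P}(T\neq T')
\;=\;
1 + (D_{\mathrm{cross}}-1)\,\mathbb{P}(T\neq T')
\;\le\;
1 + (D_{\mathrm{cross}}-1)\,\tanh(\beta L_E),
\]
which is the claimed bound (using $D_{\mathrm{cross}}\ge 1$ so the coefficient is nonnegative).

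The one subtlety I would be careful about is the precise identification under which $p$ and $q$ live on the same index set $W$ so that Theorem~\ref{thm:mallows-TV} applies verbatim: the theorem requires a \emph{single} finite index set $\calI$ with two families $\{\sigma_t\},\{\sigma'_t\}$ indexed by it and satisfying the proximity bound $d_{\mathrm{Cay}}(\sigma_t,\sigma'_t)\le 1$ for each $t$. Here I take $\calI=W$, $\sigma_t=X_t$, $\sigma'_t=Y_t$; the lockstep indexing is what makes the proximity hypothesis hold, and it is essential that the window $W$ is the full orbit $\{0,\dots,2m-1\}$ (the idealized non-truncated kernel), since only then does the window index $r$ range over a set that is genuinely shared by both chains. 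I do not expect any real obstacle beyond bookkeeping: the structural input (unit distance in lockstep) is Lemma~\ref{lem:mismatch-dist1}, the analytic input (TV $\le\tanh(\beta L_E)$) is Theorem~\ref{thm:mallows-TV}, and the maximal coupling together with the worst-case bound $D_{\mathrm{cross}}$ on the mismatch event closes the estimate.
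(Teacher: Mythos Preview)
Your proposal is correct and follows essentially the same argument as the paper: invoke Lemma~\ref{lem:mismatch-dist1} to get the $k=1$ proximity, apply Theorem~\ref{thm:mallows-TV} to bound $\TV(p,q)\le\tanh(\beta L_E)$, take a maximal coupling of $(T,T')$, and split the expectation according to $\{T=T'\}$ and $\{T\neq T'\}$ using the worst-case bound $D_{\mathrm{cross}}$ on the latter. Your additional remarks about the lockstep identification $\calI=W$, $\sigma_t=X_t$, $\sigma'_t=Y_t$ and the role of the full-orbit window are accurate and make explicit exactly the bookkeeping the paper leaves implicit.
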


\begin{proof}
On the mismatch event $(I,J)\neq(i,j)$, Lemma~\ref{lem:mismatch-dist1} gives
\[
  d_{\mathrm{Cay}}(X_t,Y_t)=1 \qquad\text{for all }t.
\]
Let $p$ and $q$ be the categorical laws of the window index under $K(\sigma,\cdot)$ and $K(\sigma',\cdot)$:
\[
  p(t)\;\propto\; w(X_t),\qquad
  q(t)\;\propto\; w(Y_t),\qquad t\in W.
\]
By Lemma~\ref{lem:mismatch-dist1} we have $d_{\mathrm{Cay}}(X_t,Y_t) =  1$ for all $t$, so in Theorem~\ref{thm:mallows-TV} we may take $k=1$,
\[
  \TV(p,q) \;\le\; \tanh(\beta L_E) . 
\]
Let $(T,T')$ be a maximal coupling of $p$ and $q$, so that
\[
  \mathbb{P}(T\neq T') = \TV(p,q) \le \tanh(\beta L_E).
\]
On the event $\{T\neq T'\}$ we use the trivial bound $d_{\mathrm{Cay}}(U,V)\le D_{\mathrm{cross}}$. Therefore
\begin{align*}
  \EE\bigl[d_{\mathrm{Cay}}(U,V)\bigr]
  &\le
  1\cdot\mathbb{P}(T=T') + D_{\mathrm{cross}}\cdot\mathbb{P}(T\neq T') \\
  &\le
  1\cdot\bigl(1-\tanh(\beta L_E)\bigr)
  + D_{\mathrm{cross}}\,\tanh(\beta L_E) \\
  &= 1 + (D_{\mathrm{cross}}-1)\,\tanh(\beta L_E).
\end{align*}
\end{proof}

\section{Numerical Illustration} \label{sec:numerics}

This section reports numerical experiments for the discrete No--Underrun Sampler
(NURS) applied to Mallows models on the symmetric group \(S_n\). We consider
Mallows distributions defined via the Kendall, \(L^1\), \(L^2\), Hamming, Cayley,
and Ulam distances, and investigate the empirical behavior of the resulting Markov
chains across a range of inverse-temperature regimes. The experiments examine
orbit index distributions, fixed--point statistics, and trace diagnostics, and
include comparisons with standard two--point Metropolis--type updates. The
observed behavior is consistent with the theoretical analysis, including effective
exploration in high--temperature regimes and the emergence of Poisson--type
fixed--point statistics.  Throughout this section we take \(\varepsilon=0.01\)
and a maximal doubling level \(M=7\).

\subsection{Uniform Direction} 

A natural choice of direction law for NURS is to sample the direction uniformly from the symmetric group, $\rho \sim \mathrm{Unif}(S_n)$. With this choice, however, there is no control over how the energy varies along the resulting orbit.  For a typical $\rho$, the sequence $E(\sigma\circ\rho^{k})$ can traverse essentially the full energy range $[0,E_{\max}]$ where $E_{\max} := \max_{\sigma\in S_n} E(\sigma)$. Along such an orbit the weight ratios are
\[
\frac{w_{k+m}}{w_k}
= \exp\!\Big(-\beta\big[E(\sigma\circ\rho^{k+m})-E(\sigma\circ\rho^{k})\big]\Big),
\]
and under uniform directions these ratios can be as extreme as
$e^{\pm \beta E_{\max}}$. When $\beta E_{\max}\gg 1$, the categorical resampling
step concentrates almost all mass on one or two lowest-energy orbit points, so
successive updates return the same state with probability close to one.
Consequently, we expect uniform directions are effective primarily in the ``hot'' regime
$\beta E_{\max} \lesssim 1$. To operate in colder regimes, one must use milder
direction laws that induce smaller energy fluctuations along the orbit.

\medskip

Figure~\ref{fig:index-uniform} displays empirical histograms of the signed index
\(k\) selected by NURS with uniform directions for \(n=200\). Across the three
distance functions shown, the resulting index distributions are approximately
symmetric and triangular. This behavior indicates that the adaptive stopping rule
effectively prevents pathological concentration of mass near \(k=0\), even when
directions are sampled uniformly from the symmetric group.

\medskip

\begin{remark}
If indices along the orbit were sampled uniformly, the signed index would follow
an exactly symmetric triangular distribution centered at $k=0$. The
approximately triangular and symmetric shapes observed in
Figure~\ref{fig:index-uniform} are therefore consistent with only mild
distortions from uniform sampling induced by the weighting and adaptive stopping
mechanisms.
\end{remark}

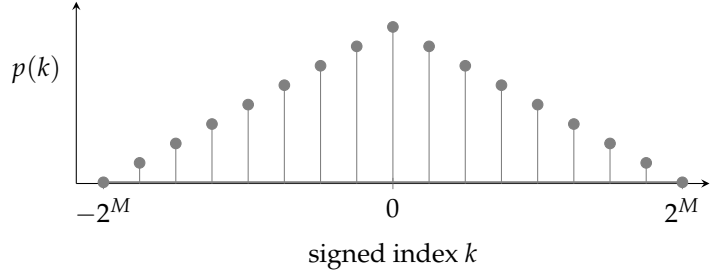
\begin{figure}[t]
\centering
\begin{tikzpicture}
\begin{axis}[
    width=10cm, height=4cm,
    xlabel={signed index $k$},
    ylabel={$p(k)$},
    ylabel style={rotate=-90, anchor=south east},
    xmin=-140, xmax=140,
    ymin=0, ymax=0.009,
    xtick={-128,0,128},
    xticklabels={$-2^M$, $0$, $2^M$},
    ytick=\empty,
    axis x line=bottom,
    axis y line=left,
    grid=none,
    enlargelimits=false,
    clip=false
]

\addplot[thick, gray, domain=-128:128, samples=2]
  { (129 - abs(x)) / (129*129) };

\addplot[ycomb, thin, mark=*, mark options={fill=gray}, draw=gray]
  coordinates {
(-128,1/16641) (-112,17/16641) (-96,33/16641) (-80,49/16641)
(-64,65/16641) (-48,81/16641) (-32,97/16641) (-16,113/16641)
(0,129/16641)
(16,113/16641) (32,97/16641) (48,81/16641) (64,65/16641)
(80,49/16641) (96,33/16641) (112,17/16641) (128,1/16641)
};

\end{axis}
\end{tikzpicture}
\caption{If the orbit lengths are of fixed length $2^M$, and the indices are sampled uniformly along the constructed orbit (equivalently, all orbit weights are equal), then
the signed index $k$ has the symmetric triangular pmf
$p(k)=\frac{2^M+1-|k|}{(2^M+1)^2}$ for $|k|\le 2^M$.
In our experiments $M=7$, hence $2^M=128$.}
\label{fig:triangular-pmf}
\end{figure}

In the hot regime $\beta \approx 1/E_{\max}$, the Mallows distribution is close to
the uniform measure on $S_n$, under which the number of fixed points is known to
converge in distribution to $\mathrm{Poisson}(1)$ as $n\to\infty$; see, e.g.,~\cite{Mukherjee2016FixedPoints}.
Using $2{,}000$ burn-in iterations followed by $8{,}000$ retained samples, the
empirical fixed-point histograms in Figure~\ref{fig:fixed-unif} closely match the
$\mathrm{Poisson}(1)$ pmf, with only minor discrepancies attributable to
finite-sample variability.

\begin{figure}[H]
\centering

\begin{subfigure}{0.32\linewidth}
  \includegraphics[width=\linewidth]{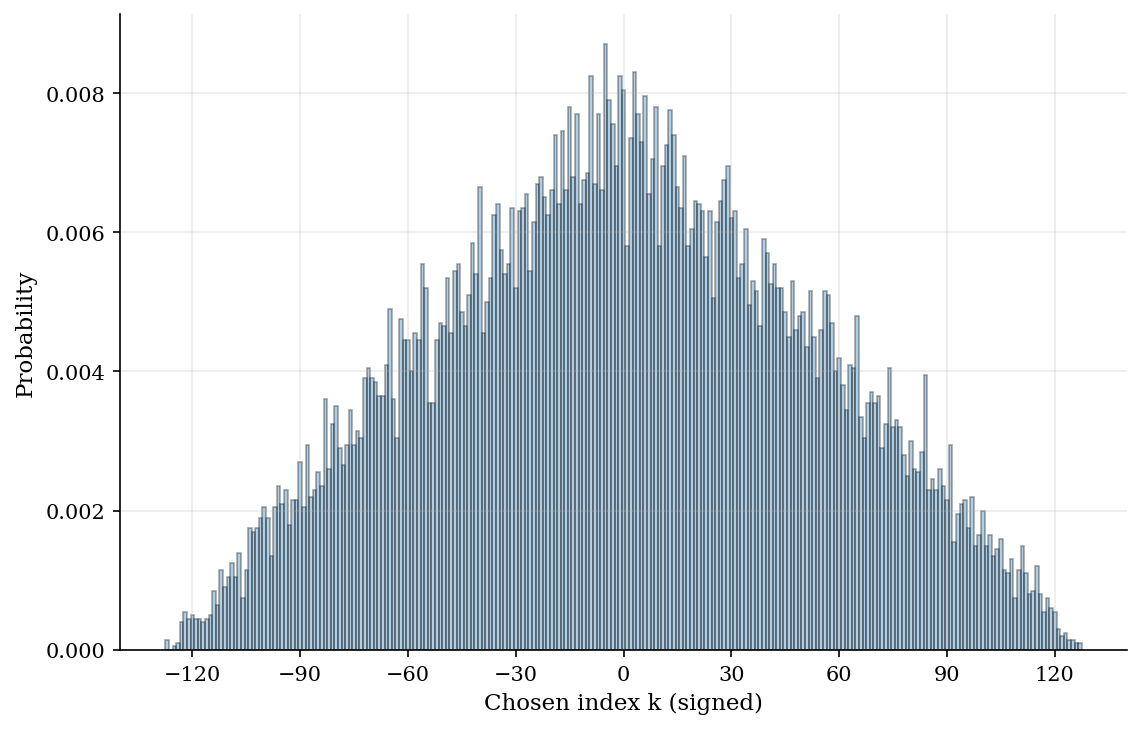}
  \caption{Ulam, $\beta \sim n^{-1}$.}
  \label{fig:index-ulam}
\end{subfigure}\hfill
\begin{subfigure}{0.32\linewidth}
  \includegraphics[width=\linewidth]{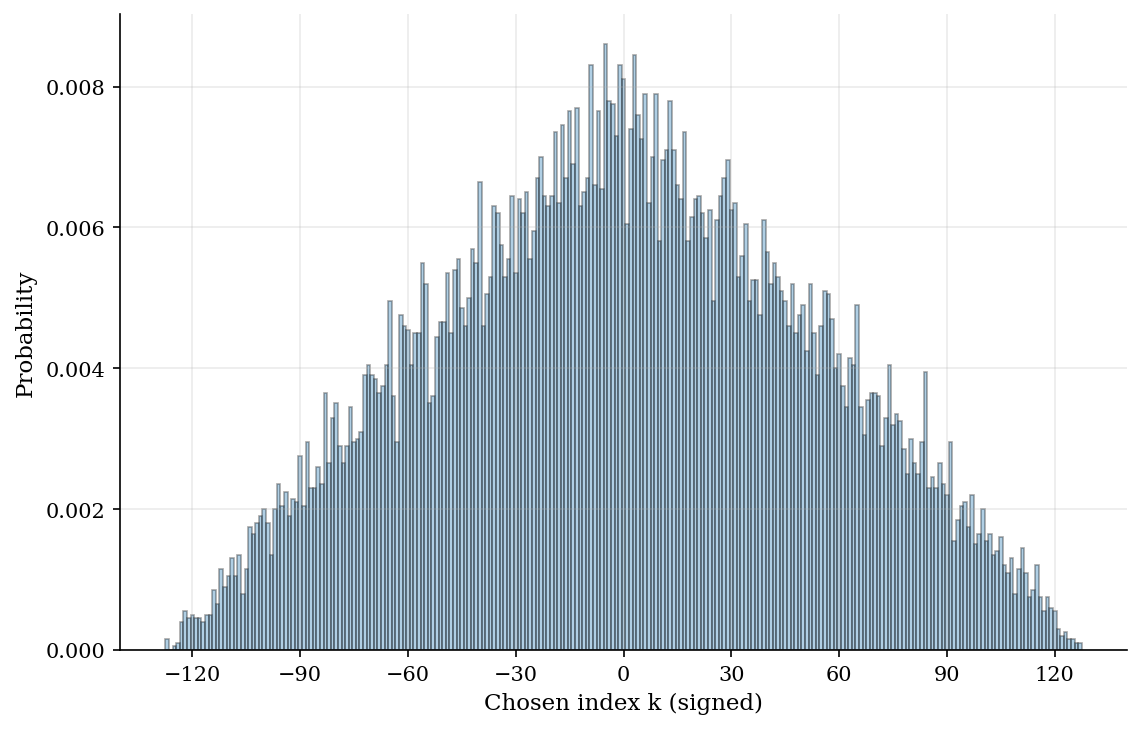}
  \caption{$L^1$, $\beta \sim n^{-2}$.}
  \label{fig:index-l1}
\end{subfigure}\hfill
\begin{subfigure}{0.32\linewidth}
  \includegraphics[width=\linewidth]{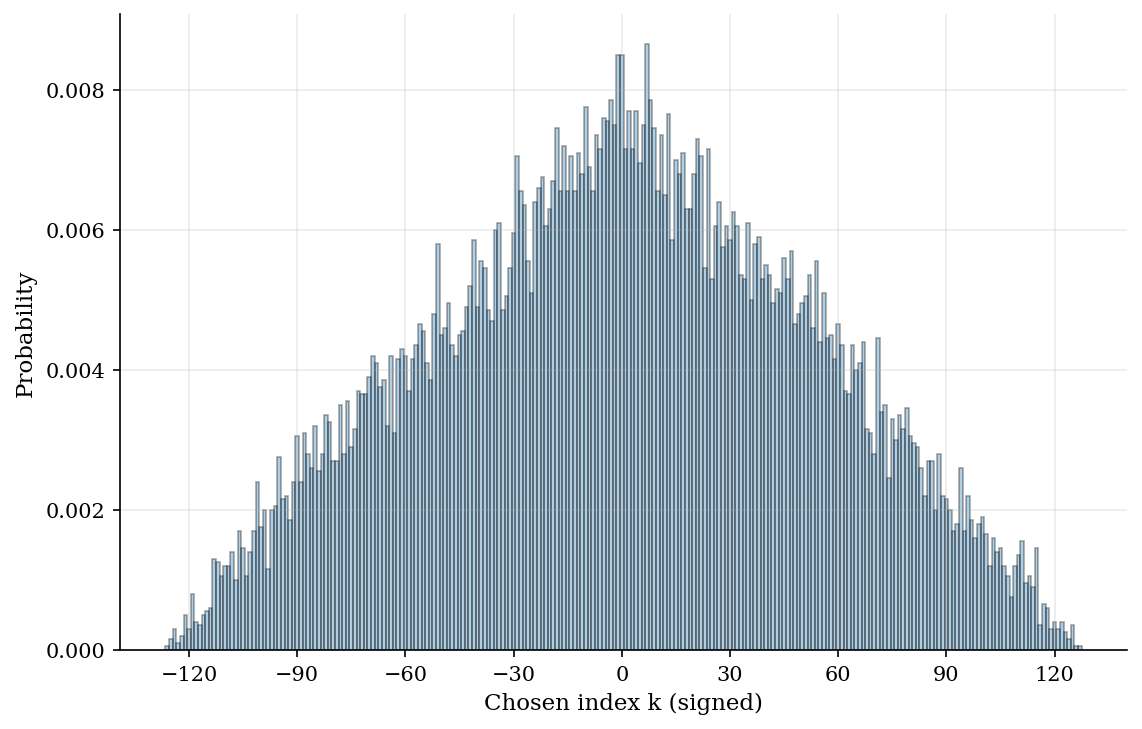}
  \caption{$L^2$, $\beta \sim n^{-3}$.}
  \label{fig:index-l2}
\end{subfigure}
\caption{Signed index $k$ for NURS with Uniform directions with scaling of $\beta=1/E_{\max}$.}
\label{fig:index-uniform}
\end{figure}

\begin{figure}[H]
\begin{subfigure}{0.32\linewidth}
  \includegraphics[width=\linewidth]{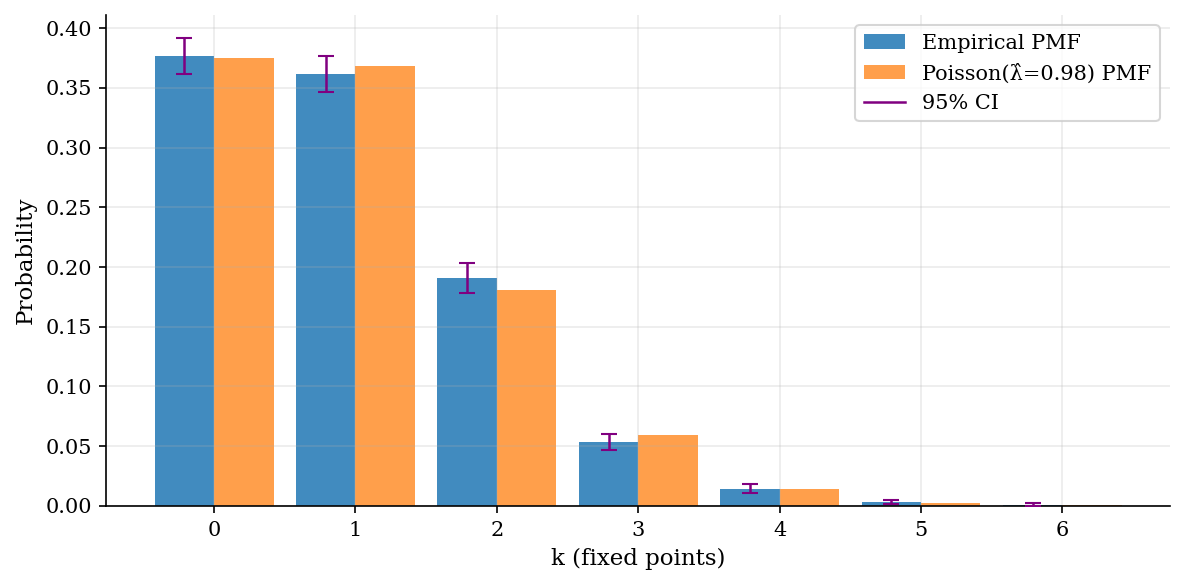}
  \caption{Ulam, $\beta=n^{-1}$.}
  \label{fig:fixed-unif-kendall}
\end{subfigure}\hfill
\begin{subfigure}{0.32\linewidth}
  \includegraphics[width=\linewidth]{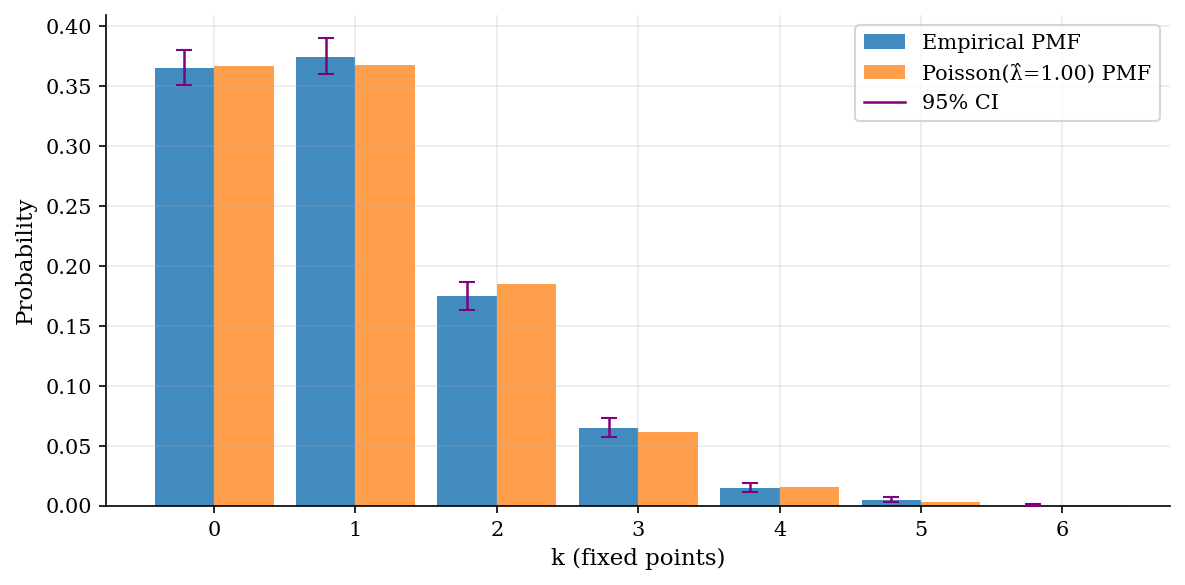}
  \caption{$L^1$, $\beta \sim n^{-2}$.}
  \label{fig:fixed-unif-L1}
\end{subfigure}\hfill
\begin{subfigure}{0.32\linewidth}
  \includegraphics[width=\linewidth]{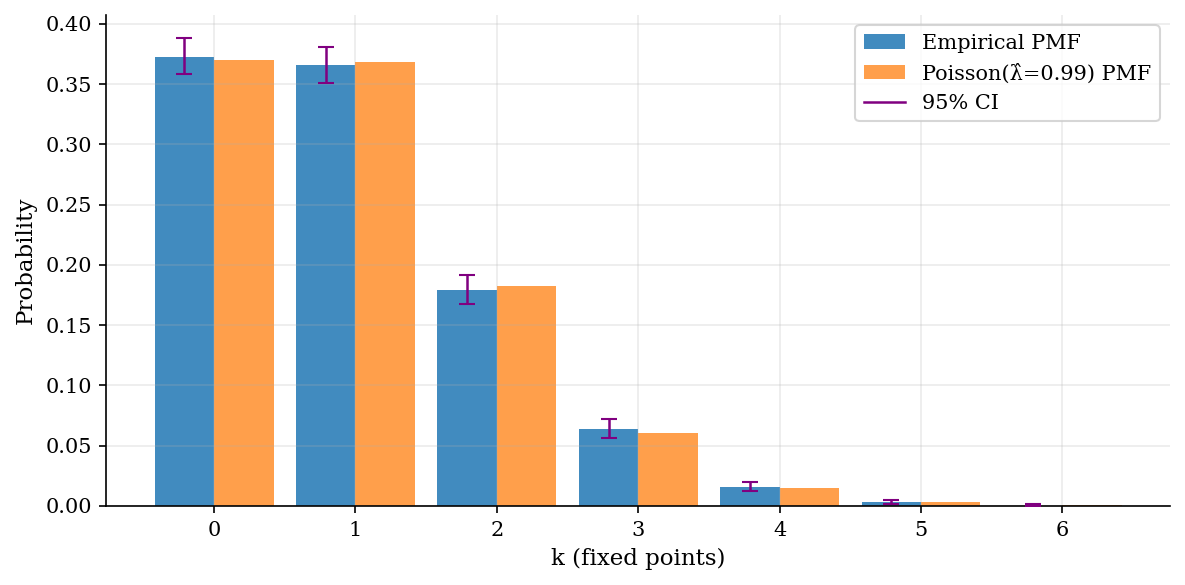}
  \caption{$L^2$,$\beta \sim n^{-3}$.}
  \label{fig:fixed-unif-L2}
\end{subfigure}
\caption{Fixed point distribution with scaling of $\beta=1/E_{\max}$.}
\label{fig:fixed-unif}
\end{figure}

\subsection{Block--shuffle direction}

Uniformly random directions on $S_n$ typically induce large, uncontrolled energy
fluctuations along NURS orbits. The block--shuffle construction provides a simple
family of \emph{milder} directions that interpolate between local moves and
global shuffles by tuning a single block-size parameter $B$.

\medskip
Fix a block size $B\in\{2,\dots,n\}$. For an offset $U\in\{0,\dots,B-1\}$, define
the cyclic rotation
\[
  \rot{U}(t) \;:=\; 1 + \bigl((t+U-1)\bmod n\bigr),
  \qquad t=1,\dots,n.
\]
This rotation randomizes the alignment of blocks relative to the labeling
$\{1,\dots,n\}$.

\medskip

\begin{definition}[Block--shuffle direction law $q_B$]
\label{def:block-shuffle-law}
A random permutation $\rho\in S_n$ with law $q_B$ is generated as follows:
\begin{enumerate}
  \item Sample an offset $U\sim\Unif\{0,1,\dots,B-1\}$.
  \item Work in the rotated coordinates $t'=\rot{U}^{-1}(t)$ and partition
        $\{1,\dots,n\}$ into consecutive blocks of size $B$
        (except possibly the final block).
  \item On each block, independently sample a uniform random permutation.
        Let $\Pi$ denote the resulting blockwise permutation in the rotated
        coordinates.
  \item Define the final permutation on the original labels by
        \[
          \rho(t)
          \;:=\;
          \rot{U}\bigl(\Pi(\rot{U}^{-1}(t))\bigr),
          \qquad t=1,\dots,n.
        \]
\end{enumerate}
\end{definition}

\begin{remark}[Basic properties of the block--shuffle direction law]
The law $q_B$ is state independent. For each $i=1,\dots,n-1$, there exists an
offset $U$ that places the adjacent labels $i$ and $i+1$ in the same block.
Conditional on this choice of $U$, selecting the within-block permutation that
swaps these two labels and taking the identity permutation on all other blocks
produces the adjacent transposition $\rho=(i\,i{+}1)$ with positive probability.
Consequently, the support of $q_B$ contains all adjacent transpositions and thus
generates the symmetric group $S_n$. Since $q_B$ also assigns positive probability
to the identity permutation, the induced Markov chain is irreducible and
aperiodic, and hence ergodic on $S_n$.
\end{remark}
\medskip
\begin{example}[Block structure on $S_{10}$ with $B=4$]
Let $n=10$ and $B=4$. Suppose $U=1$, so $\rot{1}(t)=t+1\pmod{10}$. In rotated
coordinates the blocks are
\[
  \{1,2,3,4\},\quad \{5,6,7,8\},\quad \{9,10\},
\]
which correspond in the original labels to
\[
  \{2,3,4,5\},\quad \{6,7,8,9\},\quad \{10,1\}.
\]
Sampling independent uniform permutations on these blocks and composing them
produces the block--shuffle permutation $\rho$. Away from the wrapped block
$\{10,1\}$, each element moves by at most $B-1$ positions.
\end{example}

\medskip

\begin{remark}[Why block--shuffle helps]
Let $\rho\sim q_B$ be a random permutation drawn from the block--shuffle direction
law (Definition~\ref{def:block-shuffle-law}). By construction, all but at most $B$ labels satisfy
\[
|\rho(k)-k|\le B-1
\]
the exception being the single block that wraps around the boundary between $1$
and $n$.  Thus, each label moves by at most $O(B)$ positions, and large global
rearrangements are excluded.

As a consequence, along any NURS orbit $\{\sigma\circ\rho^k\}_k$ generated using
$\rho\sim q_B$, the one--step energy increment
\[
\Delta := E(\sigma\circ\rho)-E(\sigma)
\]
grows in a controlled way with $n$, with constants depending on the block size
$B$. In particular, catastrophic jumps of order $E_{\max}$ (which are typical for
uniform directions) are ruled out.

To see the resulting scaling, it suffices to take $\sigma=\sigma_0$, the identity
permutation. In this case,
\[
\begin{aligned}
\text{$L^2$:}\quad
&\Delta_{L^2}
= E_{L^2}(\rho)
= \sum_k \bigl(\rho(k)-k\bigr)^2
\;\lesssim\; n^2 B,\\[3pt]
\text{$L^1$:}\quad
&\Delta_{L^1}
= E_{L^1}(\rho)
= \sum_k \bigl|\rho(k)-k\bigr|
\;\lesssim\; n B,\\[3pt]
\text{Kendall:}\quad
&\Delta_{\tau}
= E_{\tau}(\rho)
\;\lesssim\; n B,\\[3pt]
\text{Cayley:}\quad
&\Delta_{\mathrm{Cay}}
= E_{\mathrm{Cay}}(\rho)
= n-\#\mathrm{cycles}(\rho)
\;\lesssim\; n,\\[3pt]
\text{Hamming:}\quad
&\Delta_{\mathrm{Ham}}
= E_{\mathrm{Ham}}(\rho)
\;\le\; n,\\[3pt]
\text{Ulam:}\quad
&\Delta_{\mathrm{Ulam}}
= E_{\mathrm{Ulam}}(\rho)
= n-\mathrm{LIS}(\rho)
\;\lesssim\; n.
\end{aligned}
\]

Accordingly, the weight ratios along the orbit satisfy
\[
\frac{w_{k+1}}{w_k}=\exp(-\beta\,\Delta),
\qquad\text{so that}\qquad
\beta|\Delta|\ \text{can be tuned to remain $O(1)$.}
\]
In high--temperature regimes, where $\beta$ scales inversely with the natural
growth rate of $\Delta$, this ensures that $\beta\Delta=O(1)$ and prevents the
extreme $e^{\pm\beta E_{\max}}$ fluctuations characteristic of uniform directions
$\mathrm{Unif}(S_n)$. The symmetric, triangular index distributions observed in
Figure~\ref{fig:index-w} are consistent with this controlled energy variation.

By contrast, for Mallows models based on Cayley, Hamming, or Ulam distances at
fixed inverse temperature $\beta=1$, maintaining $\beta\Delta=O(1)$ requires
$\Delta=O(1)$, which forces only $O(1)$ labels to move per step. In this cold
regime, block--shuffle directions are therefore no longer appropriate, and we
instead employ the local direction law (Definition~\ref{def:local-direction}).
\end{remark}

\begin{figure}[H]
\begin{subfigure}{0.32\linewidth}
  \includegraphics[width=\linewidth]{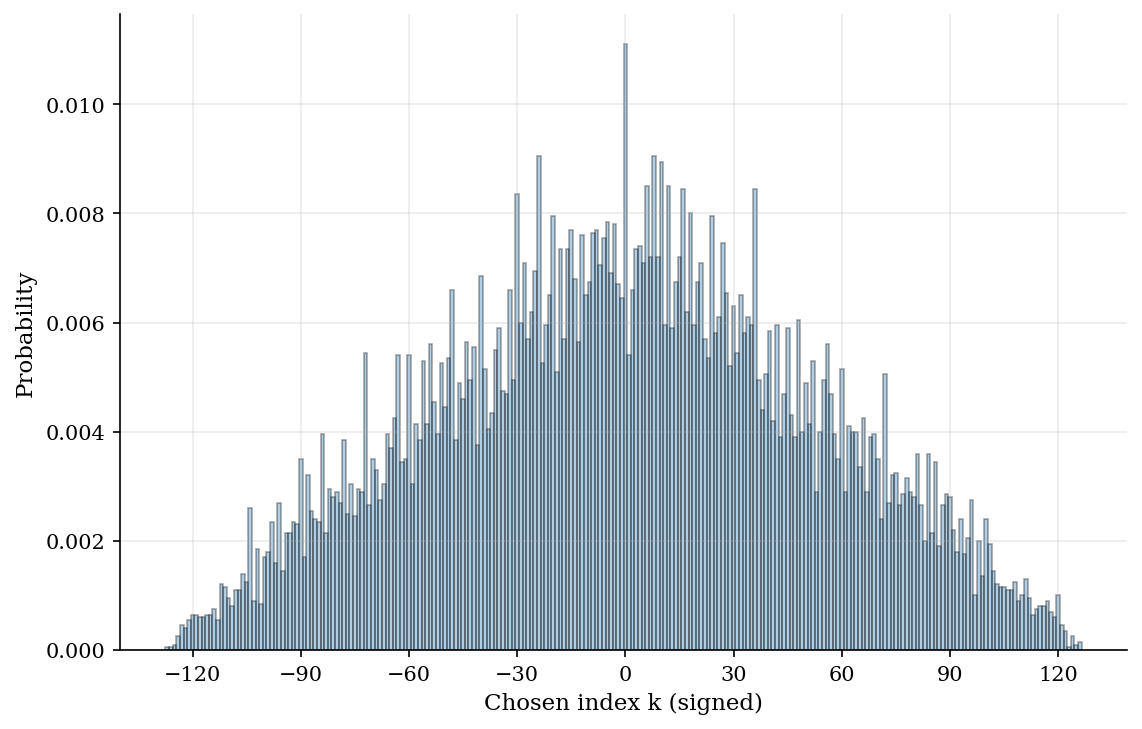}
  \caption{Kendall, $\beta=n^{-1}$, $B=9$.}
  \label{fig:index-inv-w}
\end{subfigure}\hfill
\begin{subfigure}{0.32\linewidth}
  \includegraphics[width=\linewidth]{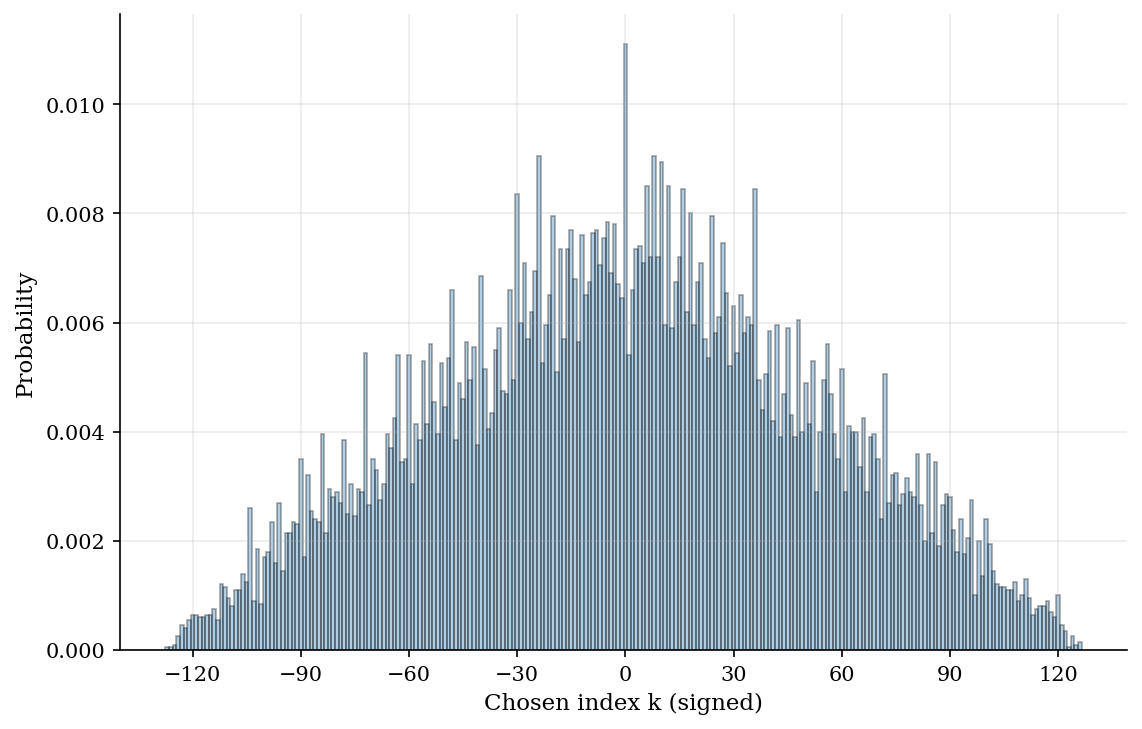}
  \caption{$L^1$, $\beta \sim n^{-1}$, $B=9$.}
  \label{fig:index-L1-w}
\end{subfigure}\hfill
\begin{subfigure}{0.32\linewidth}
  \includegraphics[width=\linewidth]{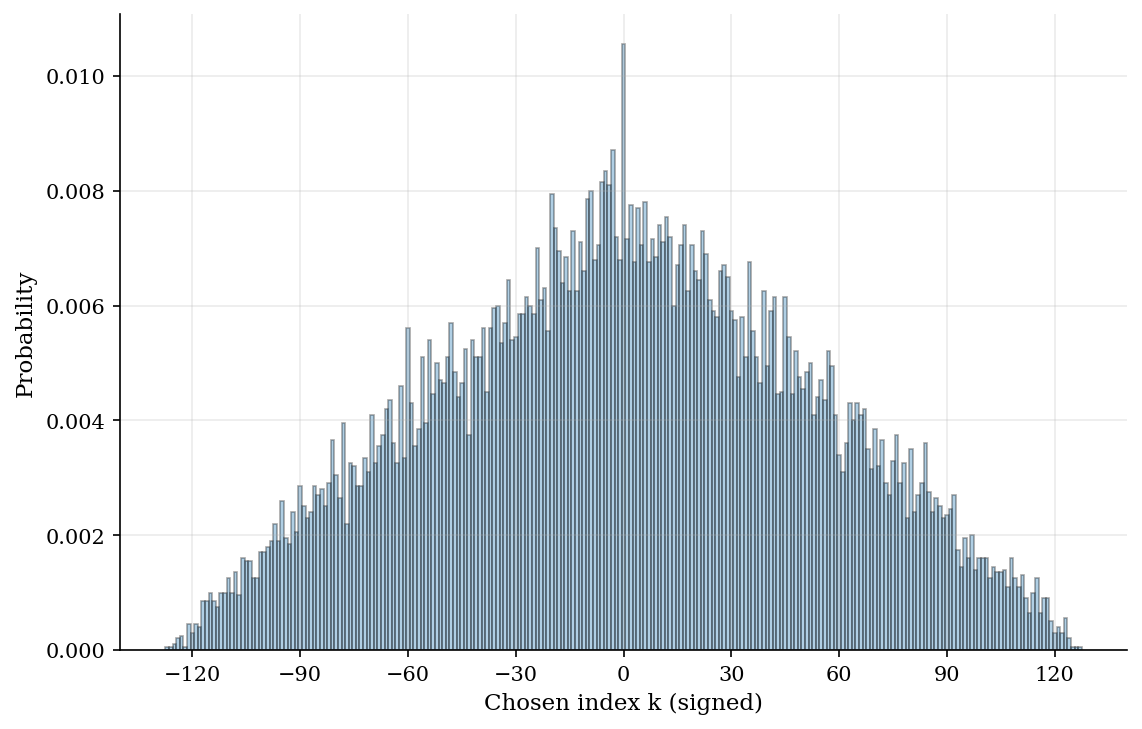}
  \caption{$L^2$,$\beta \sim n^{-2}$, $B=15$.}
  \label{fig:index-L2-w}
\end{subfigure}
\caption{Signed index $k$ for NURS with block–shuffle directions.}
\label{fig:index-w}
\end{figure}

\subsection{Local Direction}

\begin{definition}[Local direction law $q_\ell$]
\label{def:local-direction}
Fix an integer $\ell\in\{2,\dots,n\}$.  
A random direction $\rho\in S_n$ with law $q_\ell$ is generated as follows:
\begin{enumerate}
  \item Sample a starting index $i$ uniformly from $[n]:=\{1,\dots,n\}$.
  \item For $r=0,1,\dots,\ell-1$, define
  \[
    i_r \;:=\; 1 + \bigl((i-1+r)\bmod n\bigr)\ \in\ [n].
  \]
  \item Let $\rho$ be the $\ell$--cycle on these indices,
  \[
    \rho \;:=\; (i_0\ i_1\ \cdots\ i_{\ell-1}),
  \]
  i.e.\ $\rho(i_r)=i_{r+1}$ for $r=0,\dots,\ell-2$, $\rho(i_{\ell-1})=i_0$, and
  $\rho(k)=k$ for all $k\notin\{i_0,\dots,i_{\ell-1}\}$.
\end{enumerate}
We write $\rho\sim q_\ell$. The law $q_\ell$ is state independent (i.e.,
$q(\cdot\mid\sigma)\equiv q_\ell$ for all $\sigma\in S_n$) and is orbit--equivariant
in the sense of Definition~\ref{def:direction-law}.
\end{definition}

\medskip

\begin{example}[Local direction on $S_{10}$ with $\ell=3$]
Let $n=10$ and $\ell=3$. If the sampled starting index is $i=9$, then the
consecutive indices modulo $10$ are $9,10,1$, and the resulting direction is the
$3$--cycle
\[
\rho=(9\ 10\ 1).
\]
\end{example}

\subsection{\texorpdfstring{$L^1$ and $L^2$ Mallows}{L1 and L2 Mallows}}

For Mallows models based on the $L^1$ and $L^2$ distances, we employ the
\emph{block--shuffle direction law} $q_B$ (Definition~\ref{def:block-shuffle-law}).
These choices correspond to high--temperature scalings
$\beta \asymp n^{-1}$ for $L^1$ and $\beta \asymp n^{-2}$ for $L^2$, under which
the energy increments induced by block--shuffle directions remain controlled.

Figures~\ref{fig:fixed-L1} and~\ref{fig:fixed-L2} display empirical histograms of the
number of fixed points produced by NURS with block–shuffle directions for the
$L^1$ and $L^2$ Mallows models. In both settings, the fixed–point count is well
approximated by a Poisson distribution with parameter $\lambda(\beta)$, which
depends on $\beta$ and satisfies $\lambda(\beta)>1$. All results are based on
$2{,}000$ burn–in iterations followed by $8{,}000$ retained samples. 

\medskip

\begin{remark}
This behavior is consistent with known Poisson limit theorems for fixed points
in Mallows-type permutation models in weak-interaction regimes. In particular,
Mukherjee~\cite{Mukherjee2016PermLimits,Mukherjee2016FixedPoints} showed that when
$\beta$ scales with $n$ as $\beta\sim c/n$ for the $L^1$ model (respectively,
$\beta\sim c/n^2$ for the $L^2$ model), the associated permutations converge in
the permutation–limit sense, and under this convergence, the number of fixed
points converges in distribution to a Poisson random variable whose mean is
determined by the limiting measure. While these results apply to asymptotic
scaling limits rather than to fixed-$n$ models at arbitrary $\beta$, they
provide theoretical support for the Poisson-like behavior observed in our
numerical experiments.
\end{remark}

\begin{figure}[H]
  \centering
  \begin{subfigure}{0.48\textwidth}
    \centering
    \includegraphics[width=\linewidth]{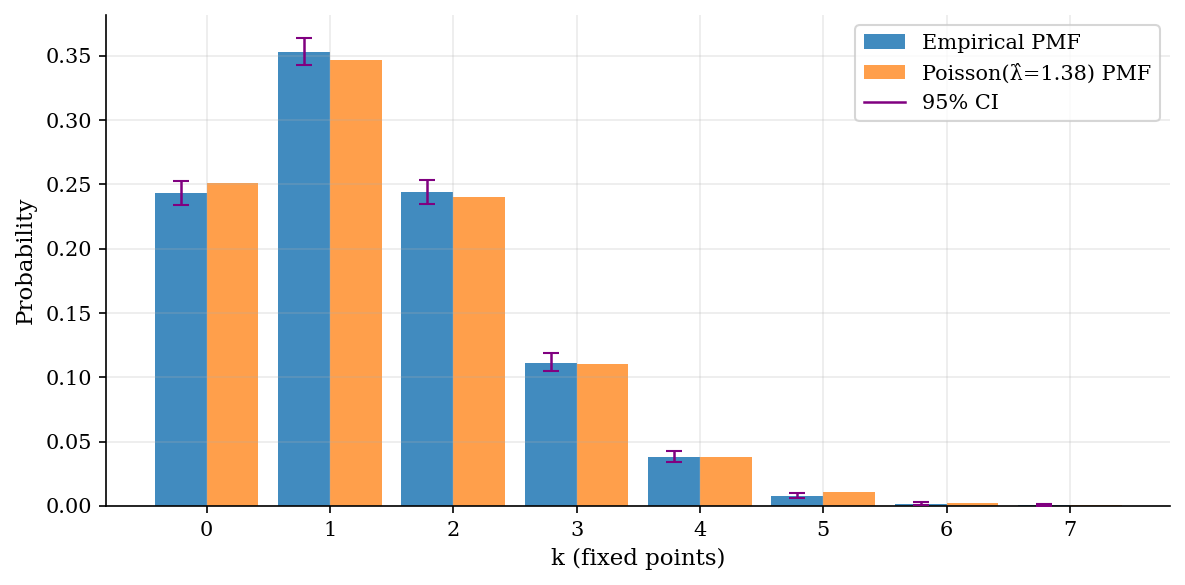}
    \caption{$\beta = n^{-1}$, $B=9$.}
    \label{fig:fixed-L1-a}
  \end{subfigure}\hfill
  \begin{subfigure}{0.48\textwidth}
    \centering
    \includegraphics[width=\linewidth]{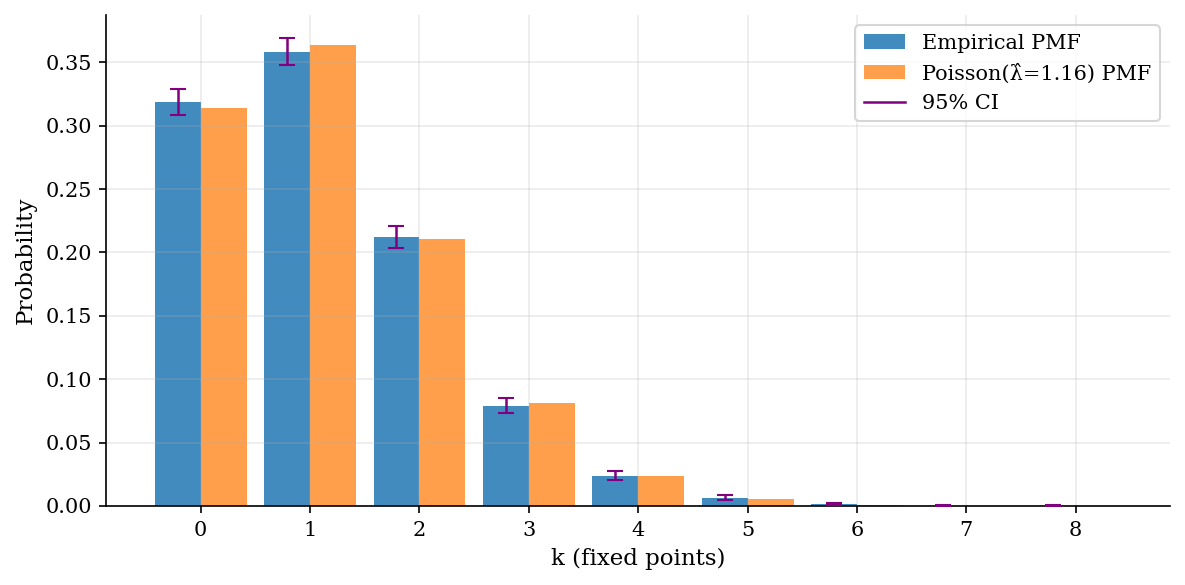}
    \caption{$\beta = 0.5\,n^{-1}$, $B=18$.}
    \label{fig:fixed-L1-b}
  \end{subfigure}
  \caption{Fixed Points of NURS under $L^1$.}
  \label{fig:fixed-L1}
\end{figure}

\begin{figure}[H]
  \centering
  \begin{subfigure}{0.48\textwidth}
    \centering
    \includegraphics[width=\linewidth]{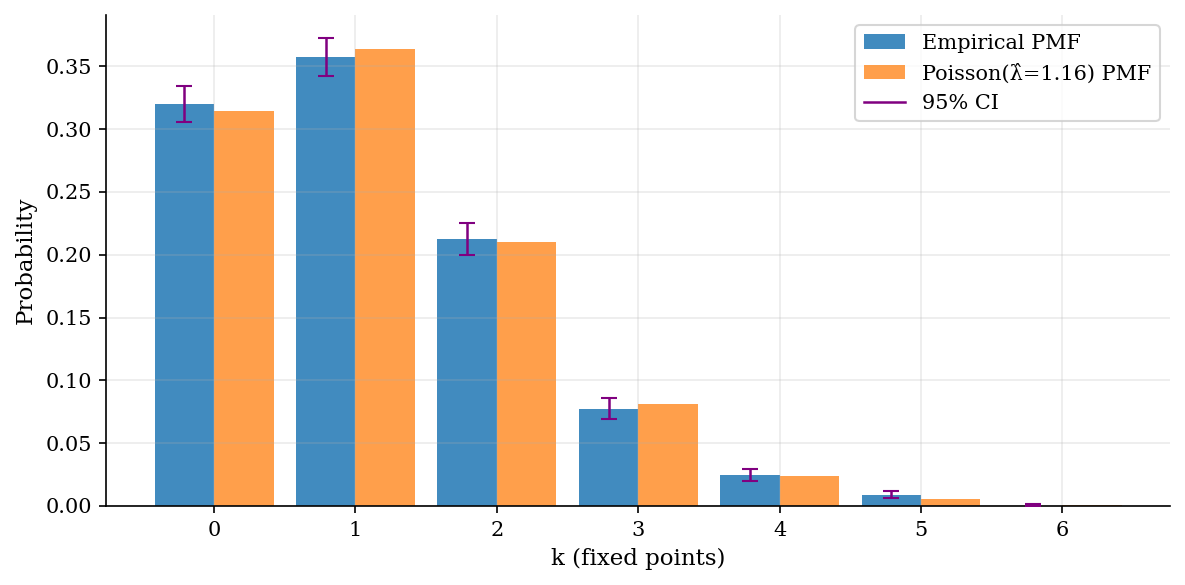}
    \caption{$\beta = n^{-2}$, $B=15$.}
    \label{fig:fixed-L2-1.0}
  \end{subfigure}\hfill
  \begin{subfigure}{0.48\textwidth}
    \centering
    \includegraphics[width=\linewidth]{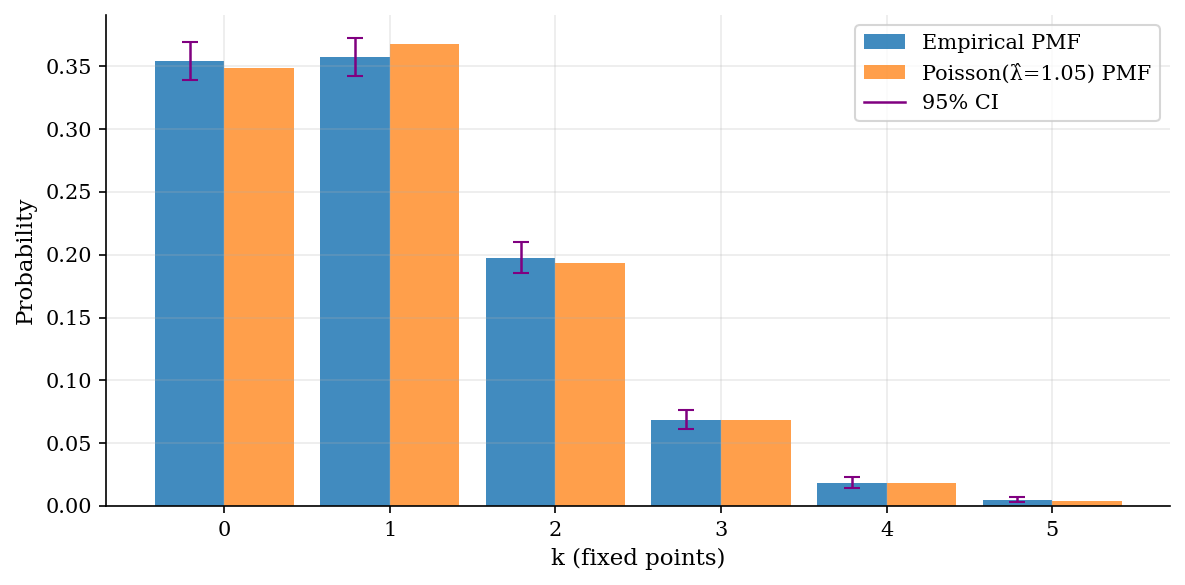}
    \caption{$\beta = 0.5\,n^{-2}$, $B=30$.}
    \label{fig:fixed-L2-b}
  \end{subfigure}
  \caption{Fixed Points of NURS under $L^2$.}
  \label{fig:fixed-L2}
\end{figure}

\subsection{Hamming Mallows}

For Mallows models based on the Hamming distance at inverse temperature
$\beta=O(1)$, we employ the \emph{local direction law} introduced in
Definition~\ref{def:local-direction}. Owing to the strong locality of these
moves, achieving exploration comparable to that obtained with block--shuffle
directions requires approximately $n^2$ more iterations.

For a permutation $\sigma\in S_n$, let $C_1(\sigma)$ denote the number of fixed
points of $\sigma$. When the reference permutation is the identity
$\sigma_0$, the Hamming distance satisfies
$d_{\mathrm{Ham}}(\sigma,\sigma_0)=n-C_1(\sigma)$, and the corresponding Mallows
distribution takes the form
\[
P_\beta(\sigma)\ \propto\ \exp\!\bigl[-\beta\,(n-C_1(\sigma))\bigr]
= e^{-\beta n}\,q^{C_1(\sigma)}, \qquad q := e^{\beta}.
\]
Thus, the Hamming--Mallows model coincides with the fixed--point--biased measure
$\mathbb{P}^{(q)}_n$ defined by
$\mathbb{P}^{(q)}_n(\sigma)\propto q^{\mathrm{fp}(\sigma)}$.

By \cite[Theorem~1]{ChelikavadaPanzo2023a}, under the fixed--point--biased measure
$\mathbb{P}^{(q)}_n$ with $q>0$ fixed, the number of fixed points converges in
distribution to a Poisson random variable with mean $q$:
\[
C_1 \ \xRightarrow[n\to\infty]{}\ \mathrm{Poisson}(q).
\]
Since the Hamming--Mallows model with parameter $\beta$ corresponds to the choice
$q=e^{\beta}$, it follows that
\[
C_1 \ \xRightarrow[n\to\infty]{}\ \mathrm{Poisson}\!\big(e^{\beta}\big).
\]

Figure~\ref{fig:fixed-Hamming} shows the empirical histogram of fixed points for
$n=1000$ together with the $\mathrm{Poisson}(e^{\beta})$ pmf; the close agreement
is consistent with this asymptotic limit.

\begin{figure}[H]
  \centering
  \begin{subfigure}{0.48\textwidth}
    \centering
    \includegraphics[width=\linewidth]{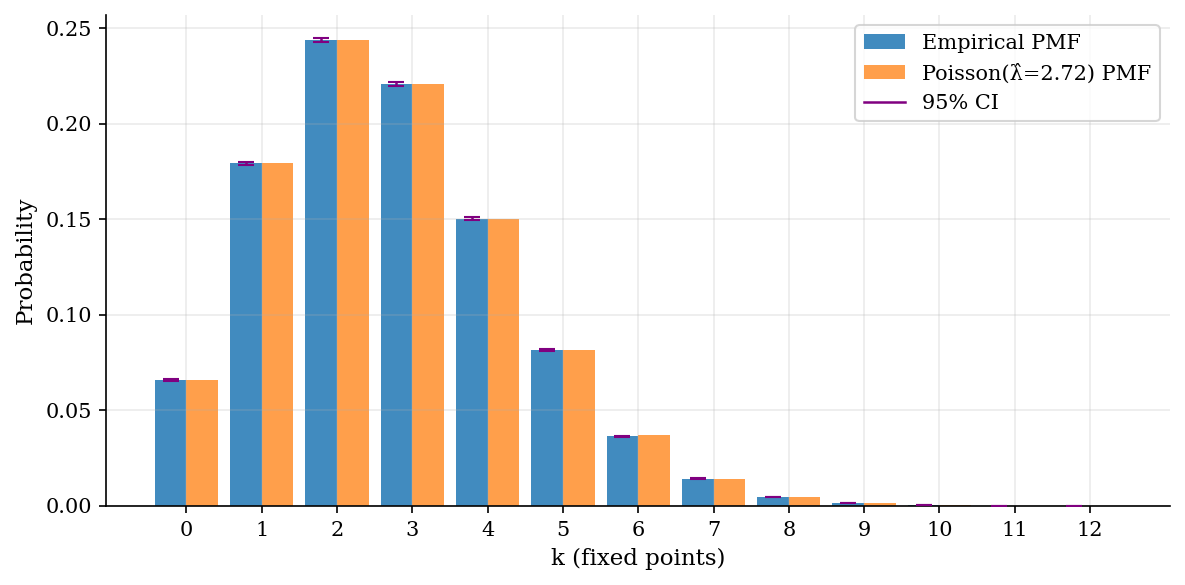}
    \caption{$\beta = 1.0$, $\ell=7$.}
    \label{fig:fixed-Hamming-a}
  \end{subfigure}\hfill
  \begin{subfigure}{0.48\textwidth}
    \centering
    \includegraphics[width=\linewidth]{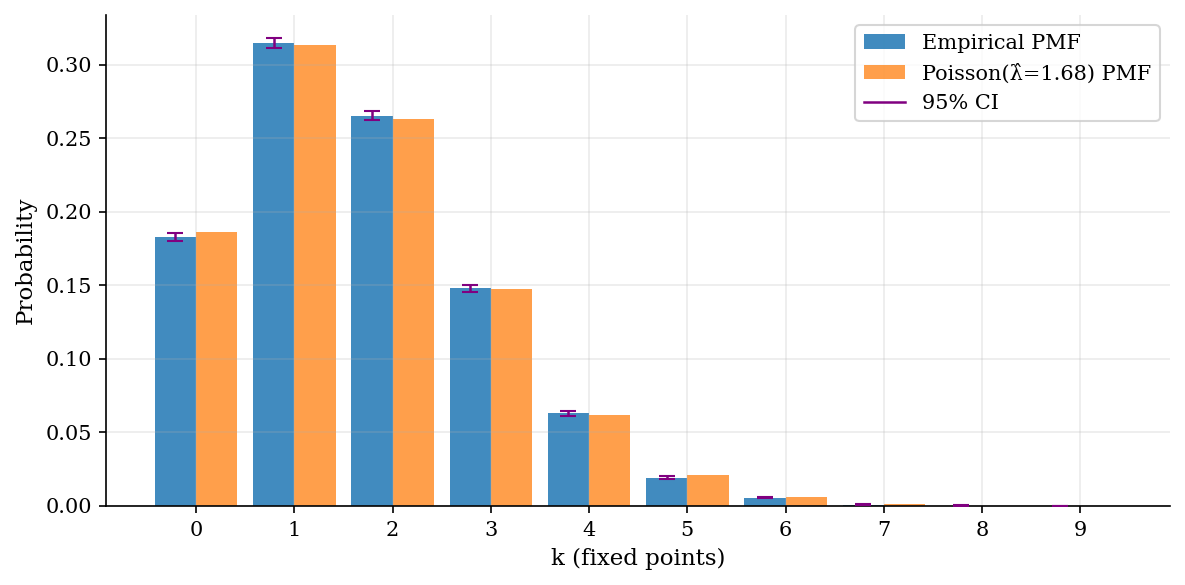}
    \caption{$\beta = 0.5$, $\ell=7$.}
    \label{fig:fixed-Hamming-b}
  \end{subfigure}
  \caption{Fixed Points of NURS under Hamming.}
  \label{fig:fixed-Hamming}
\end{figure}

\subsection{Cayley Mallows}

For the Cayley distance, recall that
\[
d_{\mathrm{Cay}}(\sigma,\sigma_0)= n-\cyc(\sigma),
\]
where $\cyc(\sigma)$ denotes the number of cycles of $\sigma$. The corresponding
Mallows distribution therefore takes the form
\[
P_\beta(\sigma)\ \propto\ \exp\!\bigl[-\beta\,(n-\cyc(\sigma))\bigr]
= e^{-\beta n}\,e^{\beta\,\cyc(\sigma)}.
\]
Equivalently, $P_\beta$ coincides with the Ewens distribution on $S_n$ with
parameter
\[
\varphi := e^{\beta}>0,
\qquad
P_\beta(\sigma)\ \propto\ \varphi^{\cyc(\sigma)};
\]
see, for example, \cite{DeODonnellServedio2021}.

A classical Poisson–process limit for Ewens permutations states that, as
$n\to\infty$,
\[
\bigl(C_1(n),C_2(n),\dots\bigr)
\ \Rightarrow\
\bigl(Z_1,Z_2,\dots\bigr),
\qquad
Z_j \stackrel{\mathrm{ind}}{\sim} \mathrm{Poisson}(\varphi/j),
\]
where $C_j(n)$ denotes the number of cycles of length $j$; see
\cite[Theorem~1]{ArratiaBarbourTavare1992}. In particular, the number of fixed
points $C_1$ converges in distribution to
\[
C_1 \ \Rightarrow\ \mathrm{Poisson}(\varphi)=\mathrm{Poisson}(e^{\beta}).
\]
Thus $C_1$ has asymptotic mean and variance $e^{\beta}$, which explains the close
agreement between the empirical fixed--point histogram and the
$\mathrm{Poisson}(e^{\beta})$ overlay observed in Figure~\ref{fig:fixed-Cayley}.

\begin{figure}[H]
  \centering
  \begin{subfigure}{0.48\textwidth}
    \centering
    \includegraphics[width=\linewidth]{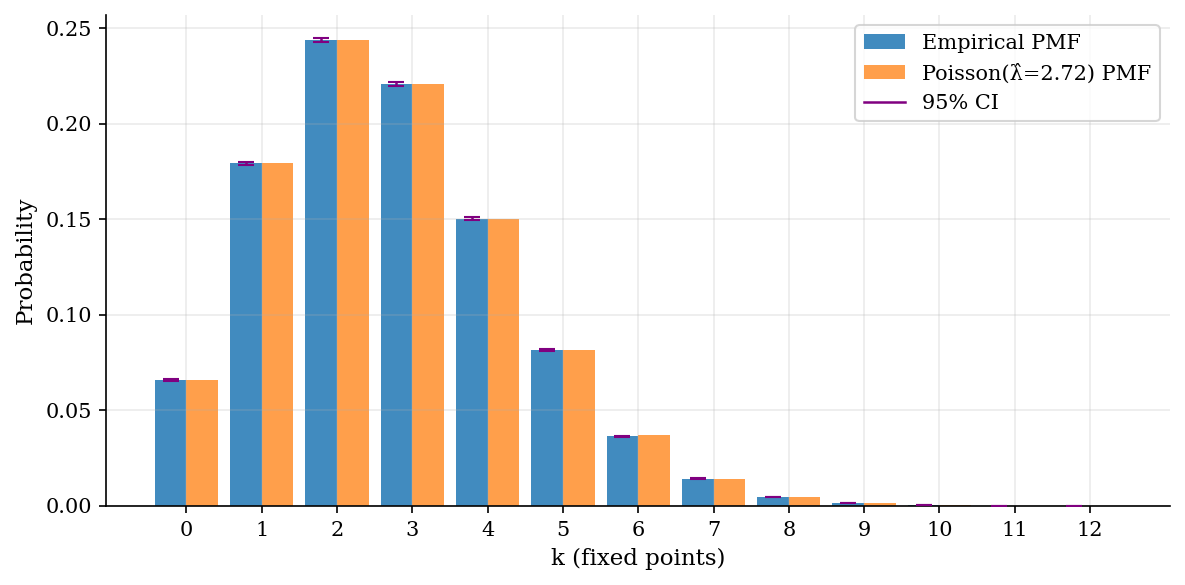}
    \caption{$\beta = 1.0$, $\ell=7$.}
    \label{fig:fixed-Cayley-a}
  \end{subfigure}\hfill
  \begin{subfigure}{0.48\textwidth}
    \centering
    \includegraphics[width=\linewidth]{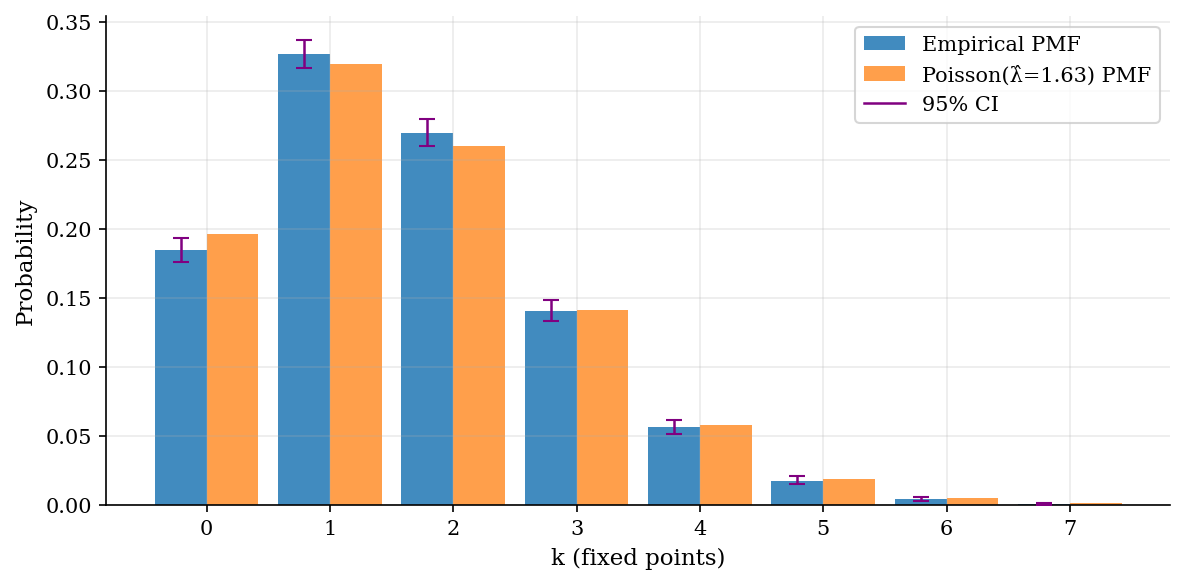}
    \caption{$\beta = 0.5$, $\ell=7$.}
    \label{fig:fixed-Cayley-b}
  \end{subfigure}
  \caption{NURS under Cayley.}
  \label{fig:fixed-Cayley}
\end{figure}

\subsection{Comparison of NURS and Barker}
If we cap the doubling depth at $M=1$, the NURS construction explores a
two--point orbit
\[
\mathcal O=\{\sigma,\sigma'\}, \qquad \sigma'=\sigma\circ\rho.
\]
In this case, the NURS selection step draws the next state from $\mathcal O$ via
categorical resampling with probabilities
\[
K_{\mathrm{NURS}}(\sigma,\sigma')
=\frac{w(\sigma')}{w(\sigma)+w(\sigma')},
\qquad
K_{\mathrm{NURS}}(\sigma,\sigma)
=\frac{w(\sigma)}{w(\sigma)+w(\sigma')},
\]
where \(w(\sigma)\propto e^{-\beta E(\sigma)}\) denotes the unnormalized Mallows
weight.  For a \emph{symmetric} proposal between
$\sigma$ and $\sigma'$, Barker’s acceptance probability \cite{Barker1965,Peskun1973} is
\[
\alpha_{\mathrm{Barker}}(\sigma\to\sigma')
=\frac{w(\sigma')}{w(\sigma)+w(\sigma')}.
\]
The resulting Barker transition kernel is therefore given by
\[
K_{\mathrm{Barker}}(\sigma,\sigma')
=\frac{w(\sigma')}{w(\sigma)+w(\sigma')},
\qquad
K_{\mathrm{Barker}}(\sigma,\sigma)
=\frac{w(\sigma)}{w(\sigma)+w(\sigma')}.
\]
Thus, when the explored orbit consists of two points, NURS’s 
categorical sampling coincides exactly with Barker’s transition rule.

We compare trace plots of three summary statistics: (i) the number of fixed
points, (ii) the cycle length of label~1, and (iii) the length of the longest
increasing subsequence (LIS), for NURS and Barker across three different distance
functions, with inverse temperatures satisfying $\beta=O(n/E_{\max})$; see
Figures~\ref{fig:trace-kendall}, \ref{fig:trace-L1}, and~\ref{fig:trace-L2}. In
all cases (NURS on the left, Barker on the right), the traces exhibit markedly
faster mixing for NURS, corresponding to substantially smaller lag
autocorrelations and larger effective sample sizes.

By contrast, when we switch to local direction laws in the colder regime
$\beta=O(1)$, this advantage largely disappears: both NURS and Barker exhibit
pronounced stickiness, reflecting the intrinsic difficulty of exploration under
highly localized proposals.

\begin{figure}[H]
  \centering
  \includegraphics[width=\linewidth]{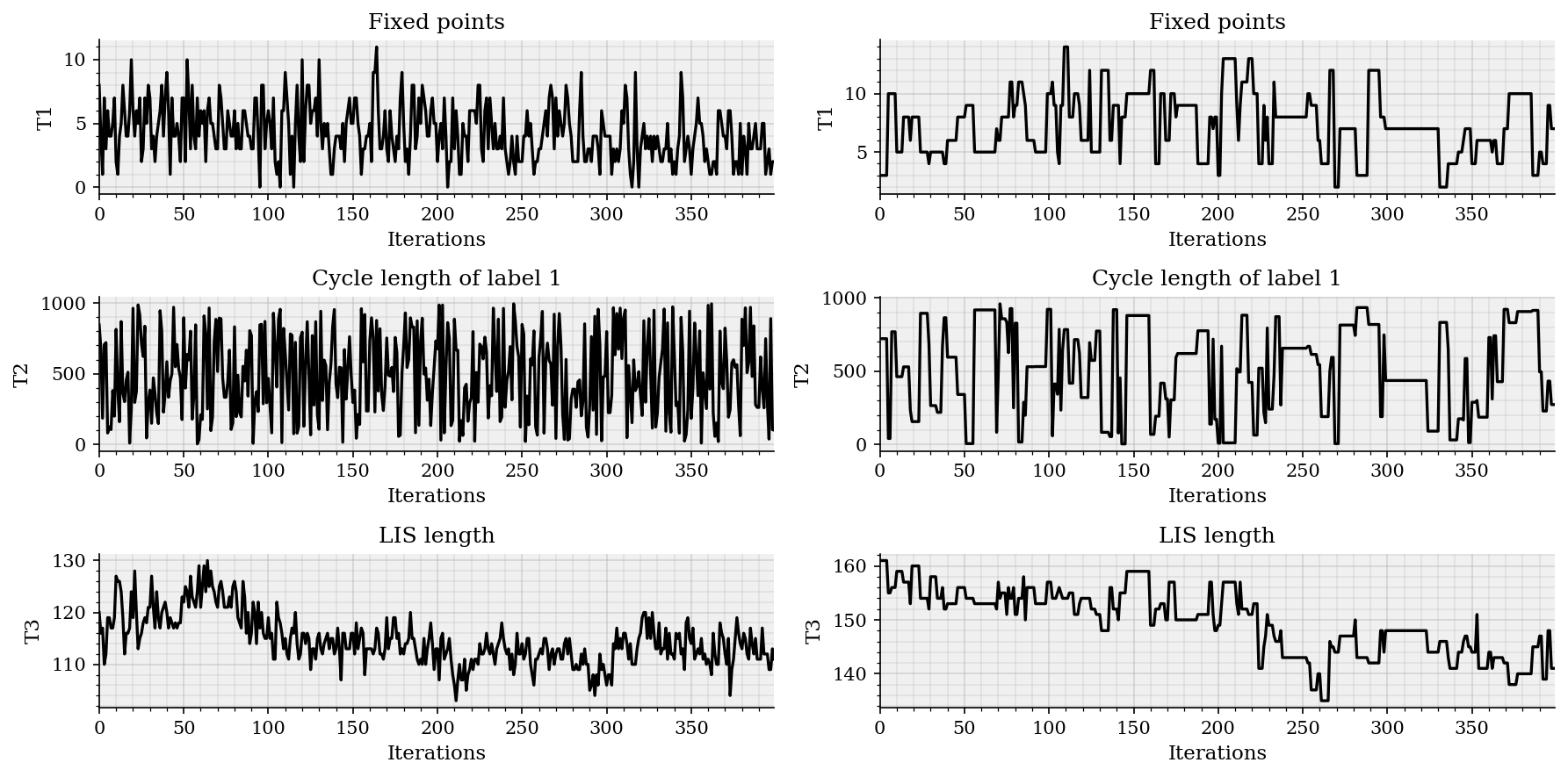}
  \caption{Trace diagnostics, Kendall with $\beta=n^{-1}$ and $B=9$. Left $M=7$ vs. right $M=1$ }
  \label{fig:trace-kendall}
\end{figure}
\begin{figure}[H]
  \centering
  \includegraphics[width=\linewidth]{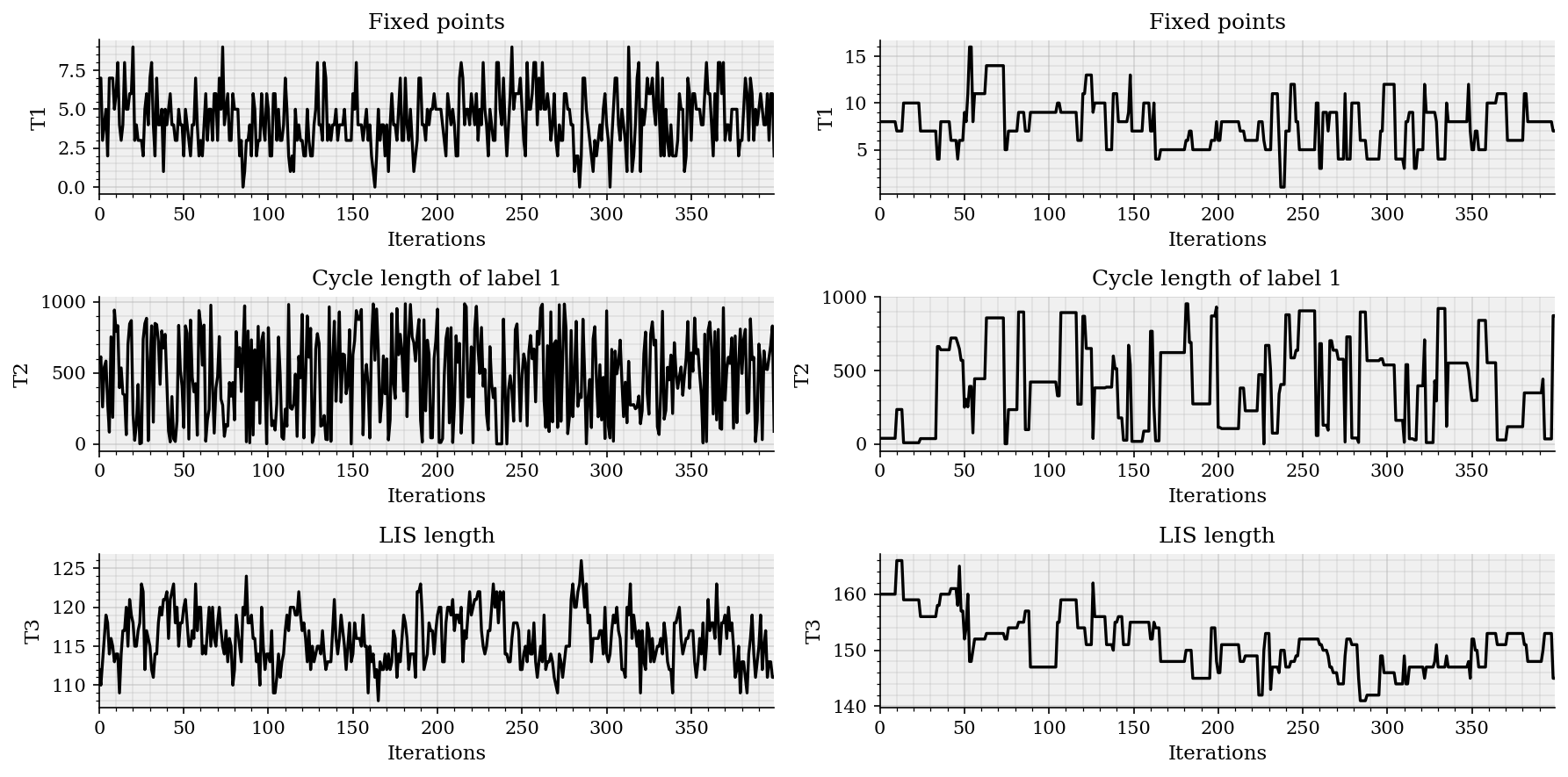}
  \caption{Trace diagnostics, $L^1$ with $\beta=n^{-1}$ and and $B=9$. Left $M=7$ vs. right $M=1$ }
  \label{fig:trace-L1}
\end{figure}
\begin{figure}[H]
  \centering
  \includegraphics[width=\linewidth]{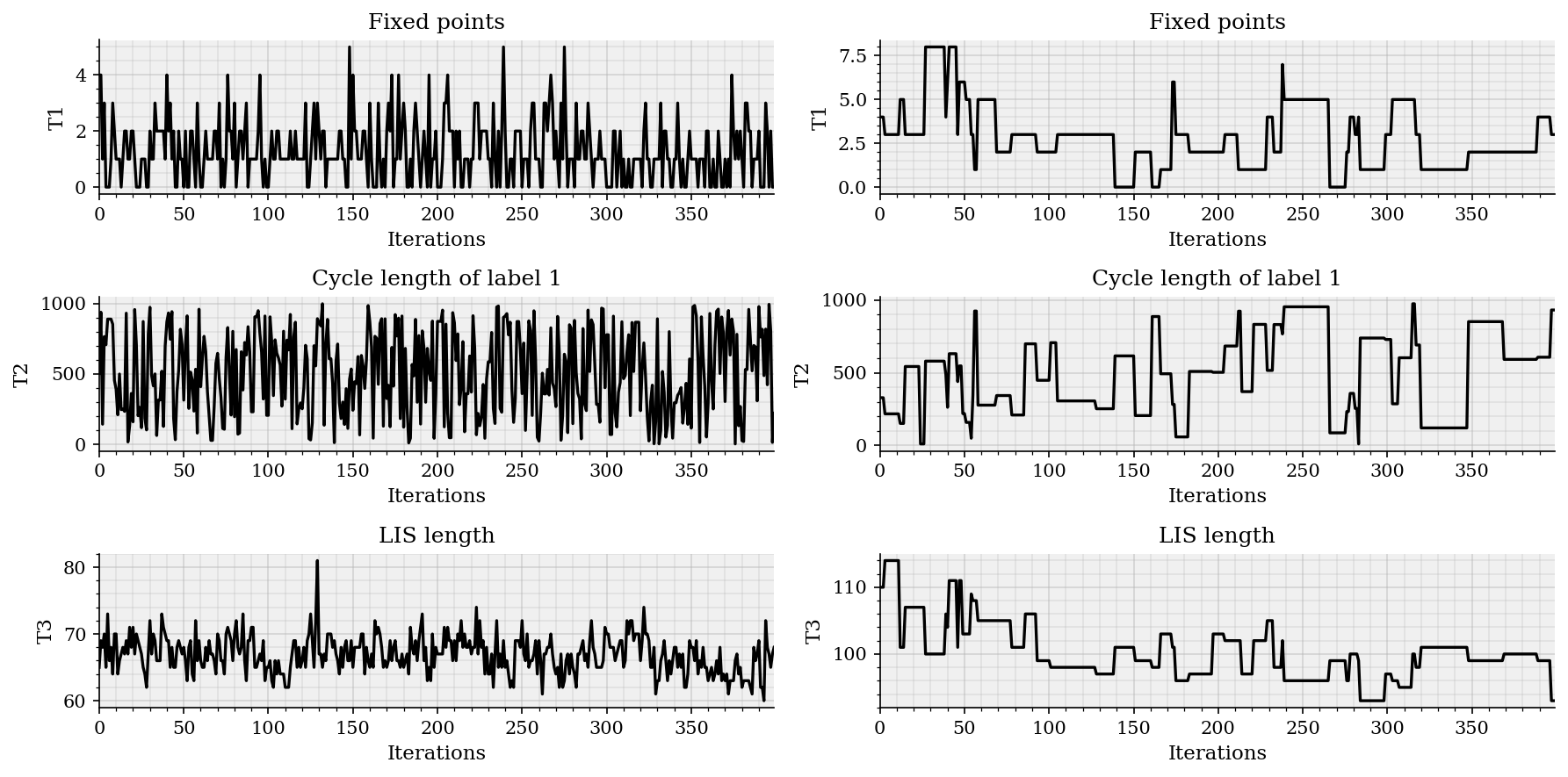}
  \caption{Trace diagnostics, $L^2$ with $\beta=n^{-2}$ and $B=15$. Left $M=7$ vs. right $M=1$.}
  \label{fig:trace-L2}
\end{figure}

\subsection{Discussion}

The numerical experiments illustrate how the
behavior of NURS is governed by the interplay between orbit geometry, energy
variation along orbits, and inverse temperature. In high--temperature regimes,
long orbits with mild energy variation produce approximately symmetric,
triangular index distributions,
effective state-space exploration, and rapid decorrelation, while in colder regimes the
locality of admissible moves becomes the dominant constraint. The observed
fixed--point statistics and trace diagnostics are consistent with known
asymptotic properties of the underlying Mallows models. 

\section{Beyond Permutations}

\label{sec:beyond}

The discrete No--Underrun Sampler (NURS) developed in this paper provides a locally adaptive Markov chain Monte Carlo method for sampling from
Mallows$(d,\sigma_0)$ models on the symmetric group~$S_n$.
Its construction rests on two structural ingredients:
(i) a family of invertible ``direction'' maps $\rho\in S_n$ generating
one--dimensional orbits
$\mathcal{O}=\{\sigma\circ\rho^k:k\in\mathbb{Z}\}$; and
(ii) an involution on an augmented space that re-centers these orbits while
preserving the joint law.
This involutive symmetry yields reversibility without a Metropolis correction.
Quantitative convergence is obtained, under additional assumptions, via an
orbitwise coupling argument that exploits overlap between truncated trajectories.

\medskip
\textbf{Toward general discrete spaces.}
A natural question is whether the same principles extend beyond permutations to
general discrete spaces~$\mathbb{S}$ equipped with a target measure~$\mu$.
At a formal level, this requires a family of bijective \emph{direction maps}
\[
T_v:\mathbb{S}\to\mathbb{S},
\qquad v\in\mathbb{V},
\]
indexed by an auxiliary direction space $\mathbb{V}$, such that the induced
orbits
\[
\mathcal{O}(\theta,v)=\{T_v^k(\theta):k\in\mathbb{Z}\}
\]
are well defined and sufficiently long.
Given such a family, one may perform randomized dyadic doubling along
$\mathcal{O}(\theta,v)$, terminate via a probabilistic no--underrun rule, and
resample the next state proportionally to the target weights.

On the augmented space $(\theta,v,m,b,i)$, the corresponding re-centering map
\[
\Psi(\theta,v,m,b,i)
\;=\;
\bigl(T_v^{\,i}(\theta),\,v,\,m,\,b-i,\,-i\bigr)
\]
is measure--preserving with respect to the product of counting measures.
As in the permutation setting, this involution guarantees reversibility in the
sense of Gibbs self--tuning (GIST).

At this level of abstraction, NURS may be viewed as a discrete analogue of Hamiltonian Monte
Carlo \cite{DuKePeRo1987,Ne2011} in a structural sense: both generate proposals
by following deterministic trajectories in an extended space and enforce
reversibility via an involutive symmetry.
In continuous Hamiltonian dynamics this symmetry relies on symplecticity and
momentum reversal, whereas in NURS it is imposed directly by the
measure--preserving involution $\Psi$ on the discrete augmented space.

\medskip
\textbf{Structural challenges.}
Despite this formal generality, several obstacles arise when $\mathbb{S}$ lacks
the group structure of the symmetric group $S_n$.
\begin{enumerate}
\item[(i)] \emph{Non-invertibility of local moves.}
If $T_v$ is not bijective, the re-centering map
$\Psi$ fails to preserve the product of counting
measures, invalidating the involution-based reversibility argument.
This occurs, for example, on a non-regular graph if $T_v(\theta)$ selects a
uniform random neighbor: the return probability then depends on vertex degree.   In such settings, some states have strictly more ways
to arrive than to leave, so augmented states cannot be paired one-to-one under
$\Psi$. Consequently, the involution cannot preserve counting measure, and
reversibility cannot be obtained without introducing explicit Metropolis-type
corrections.

\item[(ii)] \emph{Irregular energy variation along orbits.}
Quantitative convergence relies on a local comparability condition of the form
\[
e^{-\beta L_E}\;\le\;\frac{w_{t+m}}{w_t}\;\le\;e^{\beta L_E}
\quad\text{along each orbit}.
\]
If the energy $E$ varies sharply under $T_v$, then $\tanh(\beta L_E)$ approaches
one and the resulting contraction bound degenerates.
This phenomenon is the discrete analogue of the ``stiff Hamiltonian'' problem in
continuous HMC.

\end{enumerate}

Several structured classes of discrete state spaces nonetheless admit a natural NURS construction:
\begin{itemize}
\item \emph{Finite groups and semigroups:} right-multiplication by an invertible element yields well-defined orbits and involutions;
\item \emph{Regular graphs:} lifting the dynamics to oriented edges restores invertibility via unique reversals; and
\item \emph{Product spaces:} coordinatewise invertible moves (e.g.\ bit flips or local swaps) define factorized NURS directions.
\end{itemize}

\medskip
\textbf{Conjectures and open directions.}
The results above point toward a broader theory of \emph{orbit--adaptive reversible
MCMC}, in which reversibility and quantitative efficiency are governed by the
existence of measure--preserving orbit dynamics and by the geometry of energy
variation along those orbits. Natural open questions include when families of bijective,
orbit--equivariant directions are sufficient to construct nontrivial reversible
NURS--type kernels; when uniform overlap and controlled energy variation imply
logarithmic mixing times in structured spaces such as product spaces or groups;
and how orbit geometry and overlap parameters control spectral gaps. Further
questions concern the typical lengths of orbits generated by random directions,
the role of algebraic structure in ensuring long overlapping orbits, and the
design of mechanisms that reduce stiffness caused by large energy oscillations.
Taken together, these problems suggest that orbit--adaptive sampling lies
between local random walks and fully global sampling schemes.

\section*{Acknowledgements}

The authors are grateful to Bob Carpenter, Gilad Turok, Persi Diaconis, and Michael Howes for valuable discussions and insightful suggestions.  In particular, early
discussions with Persi Diaconis were instrumental in shaping the direction of
this work.

\printbibliography

\end{document}